\title{Local Nahm transform and singularity formation of ASD connections}
\author{Yang Li}
\date{\today}
\newtheorem{thm}{Theorem}[section]
\newtheorem{lem}[thm]{Lemma}
\theoremstyle{definition}
\newtheorem{defn}[thm]{Definition}
\newtheorem{eg}[thm]{Example}
\newtheorem{cor}[thm]{Corollary}
\newtheorem*{rmk}{Remark}
\newtheorem{prop}[thm]{Proposition}
\newtheorem{Problem}[thm]{Problem}
\newtheorem*{Acknowledgement}{Acknowledgement}
\newcommand{\ie}{\emph{i.e.} }
\newcommand{\cf}{\emph{cf.} }
\newcommand{\R}{\mathbb{R}}
\newcommand{\C}{\mathbb{C}}
\newcommand{\Lap}{\Delta}
\newcommand{\norm}[1]{\left\lVert #1 \right\rVert}
\DeclareMathOperator{\Hom}{Hom}
\DeclareMathOperator{\End}{End}
\begin{document}
\maketitle







\begin{abstract}
This paper develops a local analogue of the ADHM construction, which characterises ASD instantons defined over smooth bounded domains inside Euclidean $\R^4$ diffeomorphic to the 4-ball, in terms of infinite dimensional Hilbert spaces and bounded Hermitian linear operators satisfying an analogue of the ADHM equation. Morever, we describe the degeneration of this construction when a family of instantons develops a curvature singularity at the origin.
\end{abstract}

\section{Introduction}

This paper studies two related problems for ASD connections $A$ defined on a bounded region $B\subset \R^4$ with the standard Euclidean metric: \textbf{local Nahm transform}, and \textbf{singularity formation}. The unifying theme is to understand ASD connections by studying solutions to the coupled Dirac equation (\ie `Dirac fields').

The \textbf{Nahm transform} in various contexts is extensively studied (\cf e.g. \cite{DonaldsonKronheimer}\cite{BraamandBaal}\cite{Bartocci1}\cite{Cherkis}). It resembles the Fourier transform in many ways, and is also closely related to the celebrated ADHM construction. The prototype Nahm transform \cite{BraamandBaal}  constructs ASD connections over a given 4-torus, starting from coupled Dirac fields attached to ASD connections over the dual torus. It enjoys remarkable properties, such as
\begin{itemize}
	\item One-irreducible ASD connections are preserved under the transform.
	\item The inverse Nahm transform reproduces the original ASD connection.
\end{itemize}

It is desirable to establish a local version of Nahm transform; some motivations from the twistor space perspective are discussed in \cite{Witten}. Our main result in this direction, which borrows many techniques from \cite{BraamandBaal}, is

\begin{thm}\label{reconstructiontheorem}
(\textbf{Local Nahm transform}, \cf Chapter \ref{ThelocalNahmtransform} and \ref{Theinverseconstruction})
Let $B$ be a bounded region inside the Euclidean $\R^4$, such that $\bar{B}$ is diffeomorphic to the unit 4-ball with boundary. For a smooth ASD connection on a Hermitian vector bundle $(E,A)$ over $\bar{B}$, there is a  bundle of Hilbert spaces with an ASD connection $(\hat{E}, \hat{A})$ over the dual vector space $\hat{\R}^4$, called the local Nahm tranform. Morever, the inverse Nahm transform is canonically isomorphic to $(E, A)$.
\end{thm}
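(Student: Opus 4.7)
The plan is to adapt the Braam--van Baal framework \cite{BraamandBaal} to the local setting. For each $\xi\in\hat{\R}^4$, form the twisted connection $A_\xi = A + i\xi\cdot dx$ on $E$ over $\bar B$, and consider the associated coupled negative Dirac operator $\dirac_{A_\xi}^-:\Gamma(E\otimes S^-)\to\Gamma(E\otimes S^+)$. Because $B$ has boundary, I expect to work with $L^2$ solutions on $B$ (no boundary condition imposed), so that the fibre $\hat E_\xi := \Ker_{L^2}\dirac_{A_\xi}^-$ is naturally an infinite-dimensional Hilbert space. Smoothness of the family $\xi\mapsto \hat E_\xi$, combined with the continuous dependence of the $L^2$-orthogonal projector $P_\xi$ onto $\hat E_\xi$ inside $L^2(E\otimes S^-)$, yields the Hilbert bundle $\hat E\to\hat{\R}^4$. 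I would then define the Nahm connection as $\hat\nabla_{\hat\xi}\psi := P_\xi\,\partial_{\hat\xi}\psi$, the projection of the trivial derivative of an $\hat E$-valued section viewed as a $\xi$-family of $L^2$ sections.

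Next, I would verify the anti-self-duality of $\hat A$. The standard argument uses a Weitzenböck identity: writing $F(\hat A)$ via the Green's operator $G_\xi$ of $\dirac^+_{A_\xi}\dirac^-_{A_\xi}$, the self-duality type of $F(\hat A)$ is controlled by the Clifford algebra of $S^\pm$ together with $F^-(A)=0$. Since the computation is local in $\xi$ and fibrewise algebraic in Clifford multiplication, the boundary of $B$ should not disrupt it, provided one checks that no boundary terms arise when commuting $G_\xi$ past the multiplication operators $x^\mu$; this hinges on $\psi\in\hat E_\xi$ satisfying a genuine elliptic equation with $L^2$ traces.

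For the reconstruction, I would apply the formally identical construction to $(\hat E,\hat A)$: for $x\in\R^4$, form the twisted operator $\dirac_{\hat A + ix\cdot d\xi}$ on sections of $\hat E\otimes \hat S$ over $\hat{\R}^4$, and define the double-transform fibre $\hat{\hat E}_x$ as its $L^2$-kernel. The isomorphism $E_x\simeq \hat{\hat E}_x$ should be induced by the Poincar\'e kernel: to $e\in E_x$ and spinor $s\in S_x$, assign the section $\xi\mapsto P_\xi(\delta_x \otimes e\otimes s\, e^{-ix\cdot\xi})$ of $\hat E$, understood via duality with smooth test elements of $\hat E_\xi$. A computation using the defining equation $\dirac_{A_\xi}^-\psi=0$ shows that these sections solve the doubly twisted Dirac equation on $\hat{\R}^4$, and Plancherel-type identities between $B$ and $\hat{\R}^4$, together with the completeness of the Dirac families, should upgrade this to a parallel Hilbert-space isometry intertwining $A$ with the double-transform connection.

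The main obstacle, by a wide margin, will be the analytic issues created by the boundary of $B$ and the non-compactness of $\hat{\R}^4$. On the torus, $\hat E_\xi$ is finite-dimensional, smooth dependence on $\xi$ is automatic from Fredholm theory, and Plancherel is classical; here I must verify that (i) the $L^2$-kernels form a locally trivial \emph{Hilbert} bundle, with $P_\xi$ depending smoothly enough in $\xi$ to define a reasonable connection, (ii) integration by parts in the Weitzenböck computation does not generate boundary terms on $\partial B$ that obstruct ASDness, and (iii) the Poincar\'e-type reconstruction integral over $\hat{\R}^4$ converges and reproduces a section supported precisely on $\bar B$, with no spurious kernel for $x\notin \bar B$. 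Point (iii) is essentially a localisation/support property for the double Dirac kernel, and I expect it to be the technical core of the argument, requiring delicate decay estimates at infinity in $\hat{\R}^4$.
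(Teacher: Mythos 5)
Your framework is in the right spirit (Braam--van Baal transplanted to the local setting), and the first half --- the definition of the $L^2$-kernels $\hat E_\xi$, the projection connection, and the ASD verification via the Green's operator of $\dirac^+_{A_\xi}\dirac^-_{A_\xi}$ --- matches the paper closely. But the reconstruction step has a genuine structural gap, and two of the closing steps are glossed over in a way that hides the real work.

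The gap is in your definition of the double-transform fibre $\hat{\hat E}_x$ as the $L^2$-kernel of the twisted Dirac operator over $\hat{\R}^4$. After passing to the canonical trivialisation $\hat E_z \simeq H^2_D$ via $\exp(-2\pi i z)$, the connection $\hat A$ becomes the constant matrix-valued 1-form $-2\pi i \sum_\mu \hat x_\mu\, dz_\mu$, and the natural candidate solutions to the twisted Dirac equation (the ones that actually reproduce $E$) have \emph{constant modulus} in $z$; they are never $L^2$ over the noncompact $\hat{\R}^4$. The paper's key idea, which your proposal is missing, is to replace $L^2$-integrability over the dual side by an \emph{invariance} condition: $\hat{\hat E}_x$ is defined as the invariant solutions of $\hat D^-_{\hat A_x}$, where invariant means constant under the canonical trivialisation. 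This is exactly the degenerate limit of the periodicity condition over the dual torus as its size shrinks to zero. In the ADHM language it turns the problem into a concrete finite-index operator $-2\pi i\sum(\hat x_\mu - y_\mu)\hat c_\mu$ on $H^2_D\otimes S_-$. With your $L^2$ definition there is no reason the kernel is nonzero, and the whole reconstruction would collapse.

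Two further steps you wave at but which need real input. First, your point (iii) about $\hat{\hat E}_x$ vanishing off $\bar B$ is not proved by decay estimates at infinity in $\hat{\R}^4$; in the paper it follows, once the invariant formulation is in place, from a simple perturbation argument showing $\sum(\hat x_\mu - y_\mu)\hat c_\mu$ is invertible for $|y|$ large, together with constancy of the rank in the exterior. Second, your appeal to ``Plancherel-type identities together with completeness of the Dirac families'' does not supply the crucial fact that $\operatorname{rank}\hat{\hat E}_y = \operatorname{rank}E$ for $y\in B$. This is what upgrades the isometric injection $\alpha$ to an isomorphism, and it requires a genuine index computation for the Toeplitz-type operator $\sum(\hat x_\mu - y_\mu)\hat c_\mu$ on the Bergmann space (Lemma \ref{indexproblem}), relying on $\bar B$ being a ball, the nonvanishing of the symbol on $\partial B$, degree additivity, and a comparison with the flat connection. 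None of this is foreseeable from a Plancherel heuristic, and in the domain setting there is no abstract completeness argument that would substitute for it.
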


The local Nahm transform admits an alternative description analogous to the \textbf{ADHM construction}  \cite{DonaldsonKronheimer}. The information of the bundle $\hat{E}$ is equivalent to the Hilbert space of solutions to the coupled Dirac equations
\[
H^2_D= \{  s \in L^2(B, E\otimes S_-) : D_A^- s=0        \},
\]
which we call the `\textbf{Bergmann space}'. The information of the connection $\hat{A}$ is equivalent to 4 bounded Hermitian operators on $H^2_D$, defined by
\[
\hat{x}_\mu= P_0\circ x_\mu, \quad \mu=1,2,3,4,
\]
where $x_\mu$ means multiplication by the coordinate function, and $P_0$ is the orthogonal projection from $L^2(B, E\otimes S_-)$ to $H^2_D$. The ASD condition on $\hat{A}$ is equivalent to 
\[
\begin{cases}
[\hat{x}_1, \hat{x}_2]+[\hat{x}_3, \hat{x}_4]=0, \\
[\hat{x}_1, \hat{x}_3]+[\hat{x}_4, \hat{x}_2]=0, \\
[\hat{x}_1, \hat{x}_4]+[\hat{x}_2, \hat{x}_3]=0,
\end{cases}
\]
analgous to the \textbf{ADHM equation}. Thus $(\hat{E}, \hat{A})$ is analogous to the ADHM data, albeit $H^2_D$ is infinite dimensional, which brings forth some nuanced functional analytic features.

The inverse Nahm transform can also be equivalently described in this ADHM language. Given the ADHM data as above, the inverse Nahm transform bundle $\hat{\hat{E}}$ is the subbundle of the trivial bundle $H^2_D\otimes S_-$, defined as
\[
\hat{\hat{E}}_y=\ker [   H^2_D\otimes S_- \xrightarrow{-2\pi i \sum(\hat{x}_\mu-y_\mu)\hat{c}_\mu } H^2_D\otimes S_+            ], \quad \forall y\in B,
\]
and we equip $\hat{\hat{E}}\subset H^2_D\otimes S_-$ with the subbundle connection $\hat{\hat{A}}$, which turns out to be again ASD. The main content of Theorem \ref{reconstructiontheorem} is that $(\hat{\hat{E}}, \hat{\hat{A}})$ reconstructs $(E,A)$. Thus we have related ASD connections on domains to operator theory. M. Atiyah suggests to the author that this picture may be related to Alain Connes's noncommutative geometry \cite{AlainConnes}.

The second problem studied in this paper is \textbf{singularity formation}. 
 
\begin{Problem}
Given a 1-parameter family $\{A_t\}_{t>0}$ of smooth ASD connections $A_t$ on the Hermitian bundle $E$ over the closed ball $\bar{B}=\overline{B(R)}$, such that as $t\to \infty$, the connections converge smoothly away from the origin to $A_\infty$, and near the origin they are allowed to develop a curvature singularity. Describe the limiting behaviour of the Bergmann spaces $H^2_{D_{A_t}}$ of Dirac fields for $A_t$ as $t\to \infty$.
\end{Problem}

\begin{rmk}
The singularity formation problem is local in nature, so we assume $B$ is a ball for convenience. Morever, by possibly shrinking the ball, it makes sense to assume $A_\infty$ has small $L^2$ curvature.
\end{rmk}

We develop a convergence theory in Chapter \ref{ThespectralproblemChapter} and \ref{ConvergencetheoryforBergmannspaces} (which can be read independent of Chapter \ref{ThelocalNahmtransform} and \ref{Theinverseconstruction}). The Hamiltonian of the harmonic oscillator  \[
H(s)=\frac{1}{2R^2}\int_{B(R)}|x|^2|s|^2\] on $H^2_{D_{A_t}}$ induces a Hermitian operator with discrete and non-negative spectrum. The \textbf{spectral theory} is our key tool to understand the spaces $H^2_{D_{A_t}}$. There is a positive number called the spectral gap, such that for sufficiently concentrated ASD connections, the eigenvalues are either greater than the gap, which form the so called large spectrum, or very close to zero, which form the so called small spectrum, for which the eigenstates are localised near the origin.

The dimension of the small spectrum is constant for large $t$. To describe the limiting behaviour of the large spectrum, we recall that Uhlenbeck's removable singularity theorem allows us to smoothly extend the ASD connection $A_\infty$ across the origin to a smooth connection on a different bundle $\tilde{E}$ over  $B$. Then the large spectrum for $A_t$ converges in a natural sense to the spectrum of $A_\infty$. Morever,
 
 \begin{thm}
 	(\textbf{Convergence theory})
 	There is a natural topological bundle over $0<t\leq \infty$, whose fibres over $0<t<\infty$ are $H^2_{D_{A_t}}$ and the fibre over $\infty$ is the orthogonal direct sum $H^2_{D_{A_\infty}}\oplus V$, where $V$ has the dimension of the small spectrum. Morever, certain natural operators on the spaces $H^2_{D_{A_t}}$ can be extended continuously to the limit $H^2_{D_{A_\infty}}\oplus V$. 
 \end{thm}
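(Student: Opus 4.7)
The plan is to construct the bundle over $(0,\infty)$ from spectral projections of the harmonic oscillator Hamiltonian $H$ on each fiber $H^2_{D_{A_t}}$, and to define the fiber at $t=\infty$ by separating the small spectrum, which will assemble into the abstract space $V$, from the large spectrum, which will recover $H^2_{D_{A_\infty}}$. Continuity of the spectral projections in $t$ will give the local trivializations, while the spectral gap of Chapter~\ref{ThespectralproblemChapter} will make the gluing at infinity possible.

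For $t_0\in(0,\infty)$ I would first establish norm continuity of the resolvent $(H-\lambda)^{-1}$ in $t$ at any $\lambda$ off the spectrum at $t_0$, combining uniform elliptic estimates for $D_{A_t}$ with the smooth dependence of $A_t$ on $t$. Standard Kato-type perturbation theory then makes the spectral projection $P^{[0,\lambda]}_t$ norm-continuous in $t$, and the assignment $s\mapsto P^{[0,\lambda]}_t\circ P^{[0,\lambda]}_{t_0}s$ furnishes local trivializations of the finite-rank subbundle of low-energy states. Patching over an exhaustion $\lambda\to\infty$ upgrades this to a topological bundle structure on $(0,\infty)$.

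To extend across $t=\infty$ I would choose a threshold $\lambda$ strictly above the small spectrum and below the gap. On the small side, I pick a smooth family of bases of eigenstates and declare $V$ to be the abstract finite-dimensional vector space receiving their limits (taken after an appropriate rescaling around the bubble, in line with Chapter~\ref{ConvergencetheoryforBergmannspaces}). On the large side, I exploit the smooth off-origin convergence $A_t\to A_\infty$ together with a gauge matching $E\cong\tilde E$ outside the origin to transplant each eigenstate $s$ by $\chi\cdot s$, where $\chi$ is a cutoff vanishing in a shrinking ball around $0$; Weitzenböck-based decay estimates should show that $\chi s$ differs from $s$ only by negligible $L^2$ mass, so after orthogonal projection onto $H^2_{D_{A_\infty}}$ this becomes an asymptotic isomorphism onto the corresponding spectral subspace for $A_\infty$. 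Direct-summing the two maps yields the identification $P^{[0,\lambda]}_t H^2_{D_{A_t}}\to H^2_{D_{A_\infty}}\oplus V$ and extends the trivialization continuously across $t=\infty$. For the continuous extension of natural operators, the multiplications $\hat{x}_\mu$, the Hamiltonian $H$ and the spectral projections all respect the small/large splitting: on the large part they reduce to the corresponding operators for $A_\infty$, while on the small part concentration of eigenstates at the origin forces $\hat{x}_\mu$ to vanish on $V$ and pins down the limiting action of $H$.

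The main obstacle I expect is quantifying the separation between small- and large-spectrum behaviour strongly enough that the cutoff-and-transplant step is asymptotically lossless. This will require uniform-in-$t$ decay estimates for small-spectrum eigenstates away from the origin, so that the cutoff discards only negligible mass of a large-spectrum state, together with uniform Dirac elliptic estimates that remain robust as the curvature of $A_t$ concentrates. Assembling these estimates, verifying they feed correctly into the perturbation machinery, and checking compatibility with the bubble-rescaling identification for $V$ is where the bulk of the work will lie; once they are in place, the continuity of the bundle structure and of the natural operators should follow by essentially formal arguments.
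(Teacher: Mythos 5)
Your proposal is essentially correct and captures the right structure (spectral gap separating small/large spectrum; finite-rank small-spectrum subbundle continuous in $t$ by Kato perturbation theory; large-spectrum part asymptotically identified with $H^2_{D_{A_\infty}}$), but it takes a different route from the paper in two respects. For the large-spectrum identification, the paper does not cut off and transplant: it directly uses the $L^2$ orthogonal projection $\mathcal{P}: H^2_{D_{A_\infty}}\to H^2_{D_{A_t}}$ (well-defined because both Bergmann spaces are closed subspaces of the same $L^2$, once sections of $E$ and $\tilde E$ are identified) followed by the spectral projection $\pi$ onto the large spectrum, and shows $\pi\mathcal{P}$ converges to the identity in operator norm via the uniform Morrey estimate; the cutoff $\chi\cdot s$ in your version is an unnecessary intermediate step, and you would still need to show the error from $\chi s \notin H^2_{D_{A_\infty}}$ and from the gauge matching is uniformly small, which is exactly what the Morrey estimate provides, so the two arguments use the same analytic input but yours adds an extra moving part. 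For the small-spectrum part, the paper does \emph{not} realize $V$ as a bubble-rescaled limit; it simply declares $V$ an abstract inner-product space of the right dimension and trivializes the finite-rank subbundle $\bigsqcup_t (H^2_{D_{A_t}})_{small}$ over $(t_0,\infty)$ via the canonical Hermitian connection inherited from the embedding in $L^2$, formally gluing on $V$ at $t=\infty$. Your idea of identifying $V$ with a concrete bubble-limit space is attractive and would yield more information, but it is a genuine strengthening that the theorem does not claim and would require its own compactness analysis; as stated the theorem only needs the dimension count and the connection-based parallelization. For the operator part your reasoning is aligned with Proposition \ref{limitToeplitzoperator}, though note the paper also treats the Green operator (Proposition \ref{limitGreenoperator}), which does not fit the Toeplitz template and needs a separate compactness argument.
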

 
\begin{rmk}
From the Nahm transform perspective, the fact that the bundle of Hilbert spaces splits into a direct sum, is analogous to the familiar picture of reducible connections.
\end{rmk}

An important quantity describing the  curvature singularity formation is the \textbf{instanton number}, \ie the amount of energy loss
\begin{equation}
k= \lim_{t\to \infty}\frac{1}{8\pi^2} \int_B |F_{A_t}|^2 -\frac{1}{8\pi^2}\int_B |F_{A_\infty}|^2.
\end{equation}
This is well known to have a more topological interpretation: for any $0<r\leq R$,
\begin{equation}\label{instantonnumberChernclass}
k= \frac{1}{8\pi^2}\int_{B(r)} \text{Tr}(F_{A_t}\wedge F_{A_t})-\int_{\partial B(r)} CS(A_t)
\end{equation}
where $CS(A)=\frac{1}{8\pi^2} \text{Tr}(dA\wedge A+\frac{2}{3}A\wedge A \wedge A)$ is the Chern Simons form, defined up to an integer, and becomes uniquely defined by demanding continuity at $t=\infty$:
\[
\lim_{t\to\infty} \int_{\partial B(r)} CS(A_t)= \int_{\partial B(r)} CS(A_\infty)= \frac{1}{8\pi^2} \int_{B(r)} \text{Tr}(F_{A_\infty}\wedge F_{A_\infty}).
\]

We combine our theory of the local Nahm transform and our analytic theory of singularity formation, together with some rudiments of K-theory, to show

\begin{thm}\label{instantonnumberequalsdimsmallspectrum} (\cf Section \ref{Singularityformationandsmallspectrum})
The instanton number is equal to the dimension of the small spectrum $\dim V$.
\end{thm}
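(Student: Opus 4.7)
The plan is to identify both $k$ and $\dim V$ with a common topological invariant attached to the bubbling point. On the topological side, formula (\ref{instantonnumberChernclass}) expresses $k$ as the local second Chern contribution at the origin of the formal difference $[E] - [\tilde{E}]$, where $\tilde{E}$ is the Uhlenbeck extension of $(E, A_\infty)$. Since $A_t$ converges smoothly away from the origin, this difference is supported at a point and defines a compactly supported K-theory class whose Chern character equals $k$. On the analytic side, $\dim V$ is by construction the defect in continuity at $t=\infty$ of the bundle of Bergmann spaces supplied by the convergence theory of Chapter \ref{ConvergencetheoryforBergmannspaces}.

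The bridge between the two is a bubble / rescaling analysis. First I would use the spectral gap estimate to show that small-spectrum eigenstates at time $t$ are quantitatively concentrated near the origin, with sufficient decay that rescaling to the bubble scale produces $L^2$ limits on $\R^4$. After rescaling, the ASD connections $A_t$ converge (on any compact set) to an ASD instanton on $\R^4$ of charge $k$, the bubble. I would then show that the rescaled small-spectrum eigenstates converge to $L^2$ negative-chirality Dirac zero modes of the bubble, and conversely that every such zero mode is obtained as such a limit; the uniform spectral gap and a standard Fredholm-type perturbation argument prevent loss of dimension in either direction, yielding a bijection between $V$ and $\ker_{L^2} D^-_{\text{bubble}}$.

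The proof concludes by invoking the ADHM theorem on $\R^4$: for a charge-$k$ ASD instanton, $\dim \ker_{L^2} D^- = k$. This is the local index theorem that, in the K-theoretic language hinted at in the introduction, identifies the symbol class of the coupled Dirac operator with $k$ times the Bott generator, and matches the topological Chern character computed in the first step. Combining the identifications $V \cong \ker_{L^2} D^-_{\text{bubble}}$ and $\dim \ker_{L^2} D^-_{\text{bubble}} = k$ finishes the proof.

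The main obstacle I expect is the rescaling step. One must rule out two kinds of pathology: mass escaping to infinity in the rescaled picture (which would shrink the image of $V$ inside the bubble kernel), and new zero modes appearing at the bubble scale that are not captured by small-spectrum limits (which would enlarge the bubble kernel beyond $V$). Both are ultimately controlled by the spectral gap established in Chapter \ref{ThespectralproblemChapter} together with Uhlenbeck-type compactness, but translating those qualitative statements into a clean bijection — and, in parallel, bookkeeping the K-theoretic difference $[E]-[\tilde E]$ correctly when the two bundles need not be isomorphic — is where the substantive work lies.
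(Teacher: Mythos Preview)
Your strategy is genuinely different from the paper's, and it contains a real gap.

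The paper does not rescale or analyse any bubble limit. Instead it uses the local Nahm transform directly: the reconstruction theorem gives, for each finite $t$, a short exact sequence
\[
0 \to E \xrightarrow{\alpha_t} H^2_{D_{A_t}}\otimes S_- \xrightarrow{\sum_\mu (\hat{x}_\mu - y_\mu)\hat{c}_\mu} H^2_{D_{A_t}}\otimes S_+ \to 0,
\]
and the convergence results for the Toeplitz operators $\hat{x}_\mu$ and for $\alpha_t$ (Propositions \ref{limitToeplitzoperator} and \ref{limitGreenoperator}) let one pass to $t=\infty$ inside a relative index-bundle construction over $(\overline{B(R/2)},\partial B(R/2))$. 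The limiting complex splits as the $A_\infty$ complex (whose index class is trivial, since it presents $\tilde{E}$) direct sum $\dim V$ copies of $S_-\xrightarrow{\sum y_\mu\hat{c}_\mu} S_+$; the latter is the Bott class with $c_2=1$, yielding $k=\dim V$. No information about \emph{how} the curvature concentrates is ever used.

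Your plan, by contrast, hinges on the claim that a single rescaling of $A_t$ converges on compacta to an instanton on $\R^4$ of charge exactly $k$. This is not justified by the hypotheses: the family is only assumed to converge smoothly away from the origin, and the energy may concentrate at several length scales, producing a bubble \emph{tree} with charges $k_1+\cdots+k_m=k$ rather than a single bubble of charge $k$. In that situation no single rescaling captures all of the small spectrum, and your bijection $V\cong \ker_{L^2} D^-_{\text{bubble}}$ fails as stated. To repair the argument along your lines you would have to iterate the rescaling through the entire tree and control the small-spectrum decomposition at each node, including the neck regions; this is substantially harder than what you outlined, and is precisely what the paper's K-theoretic continuity argument is designed to avoid. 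The two concerns you flag (mass loss at infinity, spurious bubble zero modes) are genuine, but the more basic obstruction is that there may be no single ``the bubble'' to speak of.
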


\begin{rmk}
This is reminicent of descriptions of degenerate cases of the Nahm transform, where curvature singularity is switched with reducible connections.
\end{rmk}

\begin{Acknowledgement}
This work is part of the author's PhD first year minor projects, supervised by S.K. Donaldson and J.D. Lotay. The author also thanks S. Salamon for discussions, and
the London School of Geometry and Number Theory (Imperial College London, UCL and KCL) for providing a stimulating research environment.
 
This work was supported by the Engineering and Physical Sciences Research Council [EP/L015234/1], the EPSRC Centre for Doctoral Training in Geometry and Number Theory (The London School of Geometry and Number Theory), University College London. The author is also funded by Imperial College London for his PhD studies.
\end{Acknowledgement}

\section{The local Nahm transform}\label{ThelocalNahmtransform}

We define the local Nahm transform, and explain two different perspectives, which exhibit its analogy with the Nahm transform on the 4-torus and the ADHM construction of instantons on $\R^4$.

\subsection{Nahm transform perspective}

Suppose we are given a Hermitian bundle $E$ with a smooth ASD connection $A$ over the closure of a bounded connected domain $B\subset \R^4$ with smooth boundary, then we can produce an ASD connection on some (infinite rank) Hilbert bundle $\hat{E}\to \hat{\R^4}$ over the dual vector space $\hat{\R^4}$ as follows. First, we recall the \textbf{Poincar\'e bundle} $\mathcal{P}$ over $\R^4\times \hat{\R^4}$, given by equipping the trivial line bundle with the connection
\begin{equation}
\omega=2\pi i \sum_{\mu=1}^4 z_\mu dx_\mu,
\end{equation} 
where $x_\mu$ are the standard coordinates on $\R^4$ and  $z_\mu$ are the dual coordinates. For each fixed $z\in \hat{\R^4}$, the bundle $E$ couples to the flat line bundle $\mathcal{P}|_z$. Equivalently, one thinks of the coupled connection as a modified connection on $E$
\begin{equation}
A_z=A+2\pi i  \sum_{\mu=1}^4 z_\mu dx_\mu.
\end{equation} 
These $A_z$ are all gauge equivalent, but we keep them separate to make  apparent the analogy with the Nahm transform on the 4-torus \cite{BraamandBaal}. We introduce the \textbf{coupled Dirac operators}:
\begin{equation}
\begin{cases}
D^+_{A_z}: W_0^{2,1} ( B, E\otimes S_+) \rightarrow L^2 ( B, E\otimes S_-),
\\
D^-_{A_z}: L^2 ( B, E\otimes S_-) \rightarrow W^{2,-1} ( B, E\otimes S_+).
\end{cases}
\end{equation}
The convention in this paper is that the Clifford operators are anti-self-adjoint, with Clifford relations
\[
c_\mu c_\nu+c_\nu c_\mu= -2\delta_{\mu\nu},
\]
so the Dirac operator is formally self adjoint.

The Weitzenb\"ock formula implies that  $\ker D_{A_z}^+$ is zero. We define a bundle $\hat{E}$ over $\R^4$ with fibres given by all the $L^2$ solutions to the Dirac equation 
\begin{equation}
\hat{E}_z=\ker  D_{A_z}^-.
\end{equation}
This sits inside the trivial Hilbert bundle over $\hat{\R^4}$ with fibre $L^2( B, E\otimes S_-  )$. Let $P_z$ be the projections to the kernels according to the orthogonal splitting
\begin{equation}
L^2( B, E\otimes S_-  )= \hat{E}_z\oplus D^+_{A_z} W_0^{2,1}.
\end{equation}
The Sobolev space $W^{2,1}_0$ here imposes zero boundary condition; the subspace $D^+_{A_z}W^{2,1}_0$ is closed. The projection operator $P_z$ admits the formula
\begin{equation}
P_z=1-D^+_{A_z} G_z D^-_{A_z},
\end{equation}
where $G_z$ is the \textbf{Green's operator} solving the Dirichlet problem for the coupled Laplacian
\[
G_z=(\nabla_{A_z}^*\nabla_{A_z})^{-1} : W^{2,-1}(B, E)\to W^{2,1}_0(B,E).  
\]
Then $\hat{E}$ inherits the subbundle connection $\hat{A}$, with covariant derivative given by $\hat{\nabla}=P_z\hat{d}$, where $\hat{d}$ is the trivial connection.

\begin{defn}
The \textbf{local Nahm transform} of the pair $(E,A)$ is the Hilbert bundle with a connection $(\hat{E}, \hat{A})$ over $\hat{\R^4}$.
\end{defn}

\begin{prop}
The local Nahm transform remains ASD.
\end{prop}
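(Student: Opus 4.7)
The plan is to compute the curvature $\hat{F}$ of the subbundle connection $\hat{A}$ directly and then to show its self-dual part vanishes. The argument follows the blueprint of Braam--van Baal's calculation on the 4-torus, with the Dirichlet Green's operator $G_z$ replacing the torus Green's operator.

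First I would exploit the linear $z$-dependence $\partial_{z_\mu} D^\pm_{A_z} = 2\pi i\, c_\mu$: for a local section $\psi$ of $\hat{E}$, differentiating the constraint $D^-_{A_z}\psi = 0$ yields $D^-_{A_z}(\partial_\mu \psi) = -2\pi i\, c_\mu \psi$, and hence from $P_z = 1 - D^+_{A_z} G_z D^-_{A_z}$,
\[
(1 - P_z)\partial_\mu \psi \;=\; -2\pi i\, D^+_{A_z} G_z c_\mu \psi.
\]
The Weitzenb\"ock identity $D^-_{A_z} D^+_{A_z} = \nabla^*_{A_z}\nabla_{A_z}$ on positive spinors -- valid precisely because $A$ is ASD, so $F^+_A = 0$ -- together with $G_z \nabla^*_{A_z}\nabla_{A_z} = \mathrm{id}$ on $W^{2,1}_0$, also supplies the useful annihilation identity $P_z D^+_{A_z} = 0$.

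Next I would compute $\hat{F}_{\mu\nu}\psi = [\hat{\nabla}_\mu, \hat{\nabla}_\nu]\psi$ by expanding $\hat{\nabla}_\nu \hat{\nabla}_\mu \psi = P_z \partial_\nu(\hat{\nabla}_\mu \psi)$ and substituting $\hat{\nabla}_\mu \psi = \partial_\mu \psi + 2\pi i\, D^+_{A_z} G_z c_\mu \psi$. Every term with an explicit $D^+_{A_z}$ out front is annihilated by the outer $P_z$, and $\partial_\mu \partial_\nu$ is symmetric; only the cross term survives, giving
\[
\hat{F}_{\mu\nu}\psi \;=\; -(2\pi)^2\, P_z\bigl(c_\mu G_z c_\nu - c_\nu G_z c_\mu\bigr)\psi.
\]

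The final and main step is to verify this 2-form is anti-self-dual. On flat $\R^4$ the coupled Laplacian $\nabla^*_{A_z}\nabla_{A_z}$ is tensor-diagonal on the spinor factor, so the Dirichlet inverse $G_z$ commutes with the algebraic Clifford multiplications $c_\mu$. Hence for any self-dual 2-form $\omega^+$,
\[
\omega^+_{\mu\nu}\, c^\mu G_z c^\nu \;=\; \omega^+_{\mu\nu}\, c^\mu c^\nu\, G_z \;=\; 0,
\]
since self-dual 2-forms act trivially on $S_-$ through Clifford multiplication (the $\Lambda^2_+$-summand sits in the $\mathfrak{su}(2)_+$-factor of $\mathfrak{so}(4)$, which annihilates negative spinors). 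Pairing $\hat{F}_{\mu\nu}$ with $\omega^+$ therefore vanishes, proving $\hat{F}^+ = 0$. The principal obstacle is bookkeeping: tracking which Clifford multiplication maps $S_+ \to S_-$ versus $S_- \to S_+$, checking that the manipulations stay within the proper Sobolev spaces and respect the Dirichlet boundary data, and confirming that the flat-$\R^4$ tensor-product structure of the coupled Laplacian is what genuinely forces the crucial commutation of $G_z$ with $c_\mu$.
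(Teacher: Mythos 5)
Your computation is correct and reproduces the curvature formula (\ref{curvatureofNahmtransform}), which is exactly the Braam--van Baal calculation the paper cites; the only presentational difference is that you keep $\hat{F}$ as an operator and invoke the commutation of $G_z$ with Clifford multiplication, whereas the paper writes the matrix element $\langle c_\mu c_\nu \psi^i_z, G_z\psi^j_z\rangle$ so that anti-self-duality follows immediately from $\psi^i_z\in S_-$. The two observations are equivalent, both resting on the fact that the coupled Dirichlet Laplacian (and hence $G_z$) is tensor-diagonal on the spinor factor.
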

	
\begin{proof}
(Sketch) The same proof in \cite{BraamandBaal}, Theorem 1.5 works (but their curvature formula is off by a sign). This goes by taking a local orthonormal framing of $\hat{E}$, denoted $\hat{f}^j(z)=\psi_z^j(x)$. Then one computes the connection coefficients 
\begin{equation}
\hat{A}_{jk}=\langle \hat{f}^j, \hat{d}\hat{f}^k \rangle,
\end{equation}
where the inner product comes from $L^2$ integration. 
The curvature components are
\begin{equation}\label{curvatureofNahmtransform}
\hat{F}_{ij}=\hat{d}{\hat{A}_{ij} }+\sum_k \hat{A}_{ik}  \wedge \hat{A}_{kj} =
-\langle \Omega \wedge \Omega\cdot \psi^i_z, G_z \psi^j_z \rangle,
\end{equation}
where
\[
-\Omega \wedge \Omega\cdot \psi^i_z=(2\pi)^2 \sum_{\mu,\nu} dz_\mu \wedge dz_\nu c_\mu c_\nu \psi^i_z.
\]
The curvature formula shows the connection $\hat{A}$ to be ASD.
\end{proof}

\subsection{ADHM perspective}

The local Nahm transform can also be cast in a form which makes apparent the analogy with the ADHM data \cite{DonaldsonKronheimer}.

By gauge equivalence, it is easy to see $\hat{E}_z=\exp(-2\pi i z) \hat{E}_0$. This means we can alternatively think about the underlying bundle of $\hat{E}$ as the trivial bundle with fibre isomorphic to $H^2_D=\hat{E}_0=\ker D^-_A$; this assigns a \textbf{canonical trivialisation} to $\hat{E}$. Notice that for $s\in H^2_D$,
\[
\begin{split}
	P_z \hat{d}(\exp(-2\pi i z) s   )=-2\pi i P_z(x_\mu dz_\mu \exp(-2\pi i z) s )
	\\
	=
	-2\pi i \exp(-2\pi i z) P_0(x_\mu  s )dz_\mu, 
	\end{split}
	\]
	so after identifying all fibres with $H^2_D$, the information of the connection $\hat{A}$ is up to some constant given by the operators $\hat{x}_\mu=P_0\circ x_\mu$, defined in essentially the same way as the matrices appearing in the \textbf{ADHM data}, except that these operators act on infinite dimensional spaces. 
	
\begin{lem}
(Analogue of the ADHM equation)
The ASD condition on $\hat{A}$ is equivalent to 
\begin{equation}\label{ADHMequation}
\begin{cases}
[ \hat{x}_1, \hat{x}_2 ]+[\hat{x}_3, \hat{x}_4]=0
\\
[ \hat{x}_1, \hat{x}_3 ]+[\hat{x}_4, \hat{x}_2]=0 \\
[ \hat{x}_1, \hat{x}_4 ]+[\hat{x}_2, \hat{x}_3]=0.
\end{cases}
\end{equation}
\end{lem}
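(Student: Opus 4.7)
The plan is to verify the lemma by a direct curvature computation in the canonical trivialization, where the operator formalism renders everything transparent. Recall the identification $\hat{E}_z \simeq H^2_D$ via $s_0 \mapsto e^{-2\pi i z \cdot x} s_0$ (which lands in $\ker D_{A_z}^-$ because the Dirac operator intertwines with this gauge transformation). Following the computation displayed just before the lemma, for a section $s(z)$ of the trivial $H^2_D$-bundle the covariant derivative in the canonical trivialization is
\[
\hat{\nabla} s = \hat{d}s - 2\pi i \sum_{\mu} \hat{x}_\mu \, s \, dz_\mu,
\]
using that $P_z e^{-2\pi i z \cdot x} = e^{-2\pi i z \cdot x} P_0$ (since the exponential is unitary) and that $\hat{d} s(z)$, taking values in $H^2_D$, is fixed by $P_0$. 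Thus the connection 1-form $\hat{A}_{\mathrm{can}} = -2\pi i \sum_\mu \hat{x}_\mu \, dz_\mu$ has constant operator coefficients.

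Next I would compute the curvature. Since $\hat{A}_{\mathrm{can}}$ is constant in $z$, the exterior derivative term vanishes, and the curvature reduces to the wedge product:
\[
\hat{F} = \hat{A}_{\mathrm{can}} \wedge \hat{A}_{\mathrm{can}} = (-2\pi i)^2 \sum_{\mu,\nu} \hat{x}_\mu \hat{x}_\nu \, dz_\mu \wedge dz_\nu = -(2\pi)^2 \sum_{\mu < \nu} [\hat{x}_\mu, \hat{x}_\nu] \, dz_\mu \wedge dz_\nu.
\]
The curvature is therefore governed entirely by the pairwise commutators of the operators $\hat{x}_\mu$.

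To finish, I would expand $\hat{F}$ against the standard basis of self-dual 2-forms on $\hat{\R}^4$:
\[
\omega_1^+ = dz_1 \wedge dz_2 + dz_3 \wedge dz_4, \quad \omega_2^+ = dz_1 \wedge dz_3 - dz_2 \wedge dz_4, \quad \omega_3^+ = dz_1 \wedge dz_4 + dz_2 \wedge dz_3,
\]
and demand that the self-dual projection $\hat{F}^+$ vanish. Reading off coefficients gives $[\hat{x}_1,\hat{x}_2]+[\hat{x}_3,\hat{x}_4]=0$, $[\hat{x}_1,\hat{x}_3]-[\hat{x}_2,\hat{x}_4]=0$, and $[\hat{x}_1,\hat{x}_4]+[\hat{x}_2,\hat{x}_3]=0$, which upon rewriting $-[\hat{x}_2,\hat{x}_4]=[\hat{x}_4,\hat{x}_2]$ are precisely the three equations (\ref{ADHMequation}). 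The argument is reversible: if those commutator identities hold, $\hat{F}$ has no self-dual component and is ASD.

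There is no serious analytic obstacle; the operators $\hat{x}_\mu$ are bounded and self-adjoint on $H^2_D$ (as compositions of the bounded orthogonal projection $P_0$ with multiplication by bounded real coordinate functions over the bounded domain $B$), so commutators and wedges of operator-valued forms are well defined pointwise. The only item requiring a modicum of care is the bookkeeping that establishes $P_z \circ e^{-2\pi i z \cdot x} = e^{-2\pi i z \cdot x} \circ P_0$, which follows because conjugation by the unitary gauge transformation $e^{2\pi i z \cdot x}$ carries $\hat{E}_0$ to $\hat{E}_z$ and hence intertwines the two orthogonal projections. Once this is in place the equivalence is essentially a one-line matrix calculation lifted to the Hilbert-space setting, mirroring the finite-dimensional derivation of the classical ADHM equation.
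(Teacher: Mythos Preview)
Your proof is correct and follows essentially the same approach as the paper: identify the connection matrix in the canonical trivialisation as the constant operator-valued form $-2\pi i\sum_\mu \hat{x}_\mu\,dz_\mu$, compute $\hat{F}=\hat{A}\wedge\hat{A}$, and read off the ASD condition as the three commutator identities. Your version is simply more explicit about the bookkeeping (the intertwining of $P_z$ with the gauge factor, the expansion against the self-dual basis, and the reversibility of the argument), but the underlying idea is identical.
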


\begin{proof}
By the above discussions, the connection matrix of $\hat{A}$ in the canonical trivialisation is the constant matrix valued 1-form $\hat{A}=-2\pi i\sum_\mu \hat{x}_\mu dz_\mu$ on $\hat{\R^4}$. Thus the curvature
$\hat{F}=\hat{A}\wedge \hat{A}$, and the result is clear after expanding this.
\end{proof}

\begin{rmk}
A useful intuition when one tackles analytical questions concerning the space $H^2_D$ of solutions to the coupled Dirac equation, is the analogy between the Dirac equation and the Cauchy-Riemann equation. The space of $L^2$ solutions to the Cauchy-Riemann equation is sometimes called the \textbf{Bergmann space}, which will be our terminology for $H^2_D$.
\end{rmk}

\begin{rmk}
In the next Chapter we will be concerned with reconstructing the original ASD connection from these ADHM data. We give some physical intuitions here about why such reconstructions are possible. The connection $A$ can be thought of as some gauge field, and physically this is detected by observing the motion of fermions inside the gauge field. The space $H^2_D$, \ie solutions to the coupled Dirac equation, can be viewed as the Hilbert space for the fermion, and operators such as $P_0\circ x_\mu$ are the natural ingredients to specify the quantum mechanics for the fermion.
\end{rmk}

\section{The inverse construction}\label{Theinverseconstruction}

We show how to reconstruct the original ASD connection from its local Nahm transform. We proceed by defining the inverse Nahm transform, interpret it in the frameworks of both the Nahm transform and the ADHM construction, and then define a canonical comparison map between the original bundle and the inverse Nahm transform bundle, which turns out to preserve both the Hermitian structure and the connection matrix.

\subsection{Inverse Nahm transform}

We wish to reconstruct the original bundle by an analogue of the inverse Nahm transform in \cite{BraamandBaal}. This suggests us to look at solutions to the Dirac equation coupled to the infinite rank bundle $\hat{E}$. To set up, we take the dual Poincar\'e bundle $\hat{\mathcal{P} }$ over $\hat{\R^4}\times \R^4$, with connection
\[
\hat{\omega}=2\pi i \sum_{\mu=1}^4  x_\mu dz_\mu,
\]
and for every $x\in \R^4$, we couple $\hat{E}$ to $\hat{\mathcal{P} }|_x$. Alternatively, one thinks of the coupled connection as $\hat{A}_x=\hat{A}+2\pi i \sum_{\mu=1}^4 x_\mu dz_\mu$.  We identify the spinors on $\R^4$ with those on $\hat{\R^4}$.

The sections of $\hat{E}$ which are constant under the canonical trivialisation $\hat{E}\simeq H^2_D$ are called \textbf{invariant}. One can think of this condition as the analogue of the periodicity condition over the dual 4-torus, when the size of the dual torus has shrunk to zero. It is clear that the covariant derivative on $\hat{E}$ preserves the invariance condition.

Then we define the fibres of the \textbf{inverse Nahm transform bundle} $\hat{\hat{E} }$ over $x\in\R^4$ to be the space of invariant solutions to the coupled Dirac operators $\hat{D}^-_{\hat{A}_x}$.  A priori the inverse Nahm transform bundle may have singularities. An invariant section $s$ in $\Gamma (\hat{\R^4}, \hat{E}\otimes S_-)$ has constant modulus for all fibres $z\in \hat{\R^4}$, so the Hermitian metric on any fibre can be taken to give a norm on $s$. This equips $\hat{\hat{E} }$ with a Hermitian structure. There is also a natural connection on $\hat{\hat{E} }$, given by the projection of the natural differentiation in the $x$ variable, where we view a section of $\hat{\hat{E} }$ as a function from $\R^4$ to the invariant part of $\Gamma (\hat{\R^4}, \hat{E}\otimes S_-)$.

\subsection{A formal comparison with ADHM construction}\label{formalcomparisonwithADHM}

The Nahm transform formulation  emphasizes the analogy with the torus case, and retains as much symmetry between the Nahm transform with its inverse as is possible. We can also view the inverse Nahm transform from the perspective of the ADHM construction. The dictionary for this interpretation, is that the invariant quantities on $\hat{\R^4}$ are replaced by the information on the zero fibre. The correspondence is:
\begin{itemize}
	\item The space of invariant sections of $\hat{E}$ corresponds to $H^2_D$.
	\item The connection $\hat{A}$ correspond to four operators on $H^2_D$: the covariant derivatives $\hat{\nabla}^\mu$ become $-2\pi i \hat{x}_\mu=-2\pi i P_0\circ x_\mu$, the projection of the multiplication by $x_\mu$.
	\item The fibre of the inverse Nahm transform $\hat{\hat{E}}_y$ becomes \[
	\ker [ H^2_D\otimes S_- \xrightarrow{-2\pi i \sum(\hat{x}_\mu-y_\mu)\hat{c}_\mu } H^2_D\otimes S_+] .
	\]
	\item The Hermitian metric on the inverse Nahm transform bundle is always inherited from the metric on $H^2_D$.
	\item The connection on the inverse Nahm transform is the subbundle connection for $\hat{\hat{E}} \subset H^2_D\otimes S_-$, where $H^2_D\otimes S_-$ is thought of as the trivial bundle over $B$.
\end{itemize}


\begin{prop}(Analogue of ADHM construction)
Given the Hilbert space $H^2_D$ and four bounded Hermitian operators $\hat{x}_\mu$ on $H^2_D$ satisfying the ADHM type equation (\ref{ADHMequation}). Assume 
\[
\hat{\Lap}_{\hat{A}_y}=\hat{\nabla}_{\hat{A}_y}^*  \hat{\nabla}_{\hat{A}_y}= 4\pi^2 \sum_{\mu} (\hat{x}_\mu-y_\mu)^2, \quad \forall y\in B
\]
is invertible, then $\hat{\hat{E}}$ is a nonsingular Hermitian bundle over $B$, and
the subbundle connection $\hat{\hat{A}}$ on $\hat{\hat{E}}\subset H^2_D\otimes S_-$ is ASD.
\end{prop}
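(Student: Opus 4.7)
The plan is to mimic the classical ADHM construction of \cite{DonaldsonKronheimer}, now carried out in the infinite-dimensional Hilbert setting. Set
\[
D_y := -2\pi i \sum_\mu (\hat{x}_\mu - y_\mu) \otimes \hat{c}_\mu : H^2_D \otimes S_- \longrightarrow H^2_D \otimes S_+,
\]
a bounded operator whose kernel is, by definition, the fibre $\hat{\hat{E}}_y$. The first step is the key algebraic identity $D_y D_y^* = \hat{\Lap}_{\hat{A}_y} \otimes \mathrm{id}_{S_+}$, obtained by splitting the double sum in $D_y D_y^*$ into its symmetric and antisymmetric parts under $\mu \leftrightarrow \nu$. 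The symmetric part pairs $(\hat{x}_\mu-y_\mu)(\hat{x}_\nu-y_\nu)$ with the anticommutator $\{\hat{c}_\mu, \hat{c}_\nu^*\} \propto \delta_{\mu\nu}$ and reproduces $\hat{\Lap}_{\hat{A}_y}$; the antisymmetric part pairs the commutators $[\hat{x}_\mu, \hat{x}_\nu]$ with a Clifford expression that represents Clifford multiplication on $S_+$ by a self-dual 2-form. The three scalar relations in (\ref{ADHMequation}) assert precisely that the self-dual component of the matrix-valued 2-form with entries $[\hat{x}_\mu, \hat{x}_\nu]$ vanishes, hence the antisymmetric contribution is zero. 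A parallel computation gives $D_y^* D_y = \hat{\Lap}_{\hat{A}_y} \otimes \mathrm{id}_{S_-}$.

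Since $\hat{\Lap}_{\hat{A}_y}$ is invertible by hypothesis, $D_y D_y^*$ is invertible, which gives the orthogonal decomposition $H^2_D \otimes S_- = \ker D_y \oplus \mathrm{im}\, D_y^*$ together with the explicit orthogonal projection
\[
\Pi_y = 1 - D_y^* \bigl( \hat{\Lap}_{\hat{A}_y} \otimes \mathrm{id}_{S_+} \bigr)^{-1} D_y.
\]
The map $y \mapsto D_y$ is affine and $y \mapsto \hat{\Lap}_{\hat{A}_y}^{-1}$ is smooth in operator norm (the inverse of a smooth family of bounded invertible operators is smooth), so $\Pi_y$ depends smoothly on $y$ in operator norm. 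This realises $\hat{\hat{E}}$ as a smooth Hilbert subbundle of the trivial bundle $H^2_D \otimes S_-$; the inherited Hermitian structure is automatic, and the subbundle connection $\hat{\hat{\nabla}} = \Pi_y \circ d$ is well-defined.

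For the ASD property, I would invoke the standard formula $F_{\hat{\hat{A}}} = \Pi_y\, d\Pi_y \wedge d\Pi_y |_{\hat{\hat{E}}}$. Differentiating $\Pi_y$ using the product rule and discarding the terms in which $D_y$ acts on a section of $\ker D_y$, a short computation yields, up to a sign,
\[
F_{\hat{\hat{A}}} = \Pi_y \, (dD_y)^* \, \bigl( \hat{\Lap}_{\hat{A}_y} \otimes \mathrm{id}_{S_+}\bigr)^{-1} \, (dD_y) \, \Pi_y,
\]
with $dD_y = 2\pi i \sum_\mu dy_\mu \otimes \hat{c}_\mu$. The inverse Laplacian acts only on the $H^2_D$ factor and therefore commutes with the Clifford elements, so the expression rearranges to $\hat{\Lap}_{\hat{A}_y}^{-1}$ tensored with Clifford multiplication on $S_-$ by the matrix-valued coordinate 2-form $\sum dy_\mu \wedge dy_\nu$. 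Because Clifford multiplication on $S_-$ annihilates the self-dual part of $\Lambda^2 T^*B$, the resulting $\mathrm{End}(\hat{\hat{E}})$-valued 2-form $F_{\hat{\hat{A}}}$ is anti-self-dual.

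The principal obstacle is functional-analytic rather than algebraic: in the infinite-dimensional setting one must verify that each operator manipulation, such as operator-norm differentiability of $(D_y D_y^*)^{-1}$, closedness of $\mathrm{im}\, D_y^*$, and smoothness of the projection $\Pi_y$, remains valid. The hypothesis that $\hat{\Lap}_{\hat{A}_y}$ is invertible everywhere in $B$ is precisely what is needed for these routine verifications, and reduces the proof to the same algebra as in the classical finite-dimensional ADHM theorem.
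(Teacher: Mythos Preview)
Your approach is essentially the same as the paper's: the paper defers the proof to Corollary \ref{doubletransformisASD}, which derives the projection formula $\hat{P}_y = 1 - \hat{D}^+_{\hat{A}_y}\hat{G}_y\hat{D}^-_{\hat{A}_y}$ (your $\Pi_y$) from the Weitzenb\"ock identity $\hat{D}^-_{\hat{A}_y}\hat{D}^+_{\hat{A}_y} = \hat{\Lap}_{\hat{A}_y}$ (your identity $D_y D_y^* = \hat{\Lap}_{\hat{A}_y}\otimes\mathrm{id}_{S_+}$), and then computes the curvature formula (\ref{curvatureofdoubleNahmtransform}), which is exactly your expression with $\hat{G}_y$ tensored with Clifford action on $S_-$.

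One correction: your parenthetical claim that ``a parallel computation gives $D_y^* D_y = \hat{\Lap}_{\hat{A}_y}\otimes\mathrm{id}_{S_-}$'' is false. On $S_-$ the antisymmetric part contributes Clifford multiplication by the \emph{anti}-self-dual component of the commutator 2-form, which does \emph{not} vanish under (\ref{ADHMequation}); there is a genuine curvature term, just as in the ordinary Weitzenb\"ock formula on negative spinors. Fortunately you never use this identity: the projection formula, the smoothness of $\Pi_y$, and the curvature computation only require that $D_y D_y^*$ be scalar and invertible, so the argument stands.
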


For a sketch proof of this fact in the Nahm transform language, see Corollary \ref{doubletransformisASD} below. The Proposition does not depend on knowing that the ADHM data arise from $(E,A)$, although the invertibility of $\hat{\Lap}_{\hat{A}_y}$ turns out to be automatic in that situation.

\subsection{Construction of Dirac fields}

We proceed as in \cite{BraamandBaal}. We aim to produce coupled Dirac fields for $\hat{A}$ using the Green's operator $G_z$ in order to define a canonical comparison map $E\to \hat{\hat{E}}$.

Notice there is a tautological element $\Psi \in \Gamma( B\times \hat{\R^4}, \hat{E}^*\otimes E\otimes S_- )$, given by evaluating any element of $\hat{E}$ on $E\otimes S_- $. If we further compose with the Green's operator $G_z$, we obtain a section
\[
G\Psi=\sum_{j=1}^\infty (G_z \psi_z^j)\hat{f^j}^* \in \Gamma( B\times \hat{\R^4}, \hat{E}^*\otimes E\otimes S_- ),
\]
where $\hat{f^j}^*$ stands for the dual basis of $\hat{f}^j=\psi_z^j$. Now suppose we have $f\in E_x^*$, then evaluating against $f$ produces an element in $ \Gamma( \{x \}\times \hat{\R^4}, \hat{E^*}\otimes S_-  )   $; denote it as $G\Psi(f)$.
We notice that for each $z\in \hat{\R^4}$, the value of $G\Psi(f)$ at the point $z$ is finite. This is because if we contract by a unit spinor $\eta$,
\[
|\langle G\Psi(f)(z),\eta \rangle|^2=\sum_{j} |\langle (G_z\psi_z^j)(f),\eta \rangle|^2
\]
is in fact the norm square of the linear functional $\langle f\circ G_z ,\eta \rangle$ on the Hilbert space $\hat{E}_z$. Since the connection $A$ is smooth, this linear functional is bounded by basic elliptic analysis.

\begin{prop}
The element $G\Psi(f)$ lies in $\ker \hat{D}^-_{\hat{A}^* }$, the kernel of the Dirac operator on the dual bundle of $\hat{E}$. 
\end{prop}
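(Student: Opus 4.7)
The approach is a direct computation in a local orthonormal frame for $\hat{E}$, adapting the corresponding step of the torus proof in \cite{BraamandBaal}. I pick a local orthonormal frame $\{\hat{f}^j(z) = \psi^j_z\}$ of $\hat{E}$ near a basepoint $z_0\in\hat{\R^4}$, so that $G\Psi(f)(z) = \sum_j\langle f, (G_z\psi^j_z)(x)\rangle\,\hat{f}^{*j}$, and the dual connection on $\hat{E}^*$ has connection coefficients $-\hat{A}_{\mu,jk}$ with $\hat{A}_{\mu,jk}(z) = \langle\psi^j_z, \partial_{z_\mu}\psi^k_z\rangle_{L^2}$, together with the twist by the dual Poincar\'e connection at $x$. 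Organizing the terms, $\hat{\nabla}^{\hat{A}^*}_\mu G\Psi(f)$ decomposes into a piece involving $\partial_{z_\mu}(G_z\psi^j_z)$ and a piece involving the orthogonal projection $(1-P_z)\partial_{z_\mu}\psi^j_z$.

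The plan is to simplify each piece using three identities. Differentiating $D^-_{A_z}\psi^j_z = 0$ in $z_\mu$, together with $\partial_{z_\mu}D^-_{A_z} = 2\pi i c_\mu$, gives $D^-_{A_z}\partial_{z_\mu}\psi^j_z = -2\pi i c_\mu\psi^j_z$, which through the projection formula $1 - P_z = D^+_{A_z}G_z D^-_{A_z}$ yields $(1-P_z)\partial_{z_\mu}\psi^j_z = -2\pi i D^+_{A_z}G_z c_\mu\psi^j_z$. Similarly, differentiating $G_z\nabla^*_{A_z}\nabla_{A_z} = 1$ gives $\partial_{z_\mu}G_z = 4\pi i G_z\nabla^{A_z}_\mu G_z$. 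The ASD hypothesis enters through the Weitzenb\"ock identity $D^-_{A_z}D^+_{A_z} = \nabla^*_{A_z}\nabla_{A_z}$ on $E\otimes S_+$, on which the projection formula rests. Assembling $\sum_\mu\hat{c}_\mu\hat{\nabla}^{\hat{A}^*}_\mu G\Psi(f)$ and using the identification of Clifford actions $\hat{c}_\mu = c_\mu$ coming from the identification of spinors on $\R^4$ and $\hat{\R^4}$, the terms should reorganize into cancelling pairs in which the $x$-Dirac operator $D^-_{A_z} = \sum_\mu c_\mu\nabla^{A_z}_\mu$ appears sandwiched through $G_z$.

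The expected main obstacle is the Clifford algebra bookkeeping, in particular managing the sum $\sum_\mu c_\mu(\cdots)c_\mu$ arising from the $(1-P_z)$ contribution and matching it against the Poincar\'e twist $2\pi i x_\mu$. The cancellation ultimately relies on $D^-_{A_z}\psi^j_z = 0$ and the Dirichlet boundary condition on $G_z$, which takes the role played by the absence of boundary in the torus setting of \cite{BraamandBaal}. Since the argument is algebraic and pointwise in $z$ and requires no analytic input beyond the well-posedness of the Dirichlet problem for the coupled Laplacian already in hand, the adaptation from the torus case should be essentially mechanical.
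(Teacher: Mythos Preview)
Your approach is exactly what the paper does: it simply refers to \cite{BraamandBaal}, Proposition 2.1, noting that the formal calculation carries over verbatim using the explicit connection coefficients of $\hat{A}$ and the identities for $G_z$ (in particular $\partial_{z_\mu}G_z = 4\pi i\,G_z\nabla^{A_z}_\mu G_z$ and $P_z = 1 - D^+_{A_z}G_zD^-_{A_z}$).

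One small slip: in the proposition as stated you are computing $\hat{D}^-_{\hat{A}^*}$, \emph{not} $\hat{D}^-_{\hat{A}^*_x}$, so there is no ``twist by the dual Poincar\'e connection at $x$'' and no term $2\pi i x_\mu$ to match against. The $x$-dependence of $G\Psi(f)$ enters only through the evaluation point and is inert under $z$-differentiation; the Poincar\'e twist appears only in the subsequent Corollary, where one multiplies by $\exp(2\pi i z(x))$ to pass from $\ker\hat{D}^-_{\hat{A}^*}$ to $\ker\hat{D}^-_{\hat{A}^*_x}$. If you include that extra term in the present computation the cancellation will fail, so drop it and the rest of your outline goes through.
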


\begin{proof}
(Sketch) The formal calculation for the Dirac equation is the same as in \cite{BraamandBaal}, Proposition 2.1, which uses explicit expressions for the connection coefficient of $\hat{A}$, and reduces the proof to properties of the Green's operator.
\end{proof}

\begin{cor}
The element $4\pi \exp(2\pi i z(x) ) G\Psi(f)(z)$  lies in $\ker \hat{D}^-_{\hat{A}_x^* }$.
Morever, under the canonial trivialisation
$\hat{E}_z^*=\exp(2\pi i z) (H^2_D)^* ,$ this is invariant.

\end{cor}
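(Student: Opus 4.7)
The proof breaks into two independent verifications: the Dirac equation, and the invariance. Both reduce to a gauge-equivalence computation using the parallel section $e^{2\pi i z(x)}$ of the dual Poincar\'e bundle $\hat{\mathcal{P}}|_x$, which is the key observation that the multiplier $\exp(2\pi i z(x))$ intertwines $\hat{A}^*$ with $\hat{A}_x^*$.

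For the Dirac part, the plan is to use the previous proposition (which gives $\hat{D}^-_{\hat{A}^*}G\Psi(f)=0$) and to compute the effect of conjugating by $e^{2\pi i z(x)}$. Since $\hat{A}_x=\hat{A}+2\pi i\sum x_\mu dz_\mu$, the induced connection on the dual bundle satisfies $\hat{A}_x^*=\hat{A}^*-2\pi i\sum x_\mu dz_\mu$. A direct calculation then gives
\[
\hat{D}^-_{\hat{A}_x^*}\bigl(e^{2\pi i z(x)} s\bigr)=\sum_\mu \hat{c}_\mu\bigl(\partial_{z_\mu}+(\hat{A}^*)_\mu-2\pi i x_\mu\bigr)\bigl(e^{2\pi i z(x)}s\bigr)=e^{2\pi i z(x)}\hat{D}^-_{\hat{A}^*}s,
\]
because the derivative of the exponent exactly cancels the $-2\pi i x_\mu$ twist. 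Applied to $s=G\Psi(f)$ the right side vanishes. The factor $4\pi$ is a normalisation and plays no role here.

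For the invariance, the plan is to rewrite $G\Psi(f)(z)$ in the canonical trivialisation $\hat{E}_z^*\simeq e^{2\pi i z(\cdot)}(H^2_D)^*$ and check that the $z$-dependence is exactly $e^{-2\pi i z(x)}$. Given $s\in H^2_D$, the corresponding element of $\hat{E}_z$ is $\psi_z(y)=e^{-2\pi i z(y)}s(y)$. Because $\nabla_{A_z}(e^{-2\pi i z(y)}s)=e^{-2\pi i z(y)}\nabla_A s$ and similarly for the adjoint, one has $G_z\psi_z=e^{-2\pi i z(y)}G_0 s$, where $G_0$ is the Green's operator for $A$. From the definition $G\Psi(f)(z)\colon \psi_z\mapsto f\bigl((G_z\psi_z)(x)\bigr)$ it follows that in the canonical trivialisation $G\Psi(f)(z)$ reads $s\mapsto e^{-2\pi i z(x)}f\bigl((G_0 s)(x)\bigr)$. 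Multiplying by $4\pi\exp(2\pi i z(x))$ kills the $z$-dependence and yields the constant-in-$z$ functional $s\mapsto 4\pi f\bigl((G_0 s)(x)\bigr)\in (H^2_D)^*\otimes (S_-)_x$, which is exactly invariance.

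Neither step presents a serious obstacle: the Dirac statement is a cosmetic rewriting of the previous proposition via a flat gauge transformation, and the invariance statement reduces to the elementary identity $G_z(e^{-2\pi i z(y)}s)=e^{-2\pi i z(y)}G_0 s$. The only point worth being careful about is the sign of the dual connection $\hat{A}_x^*=\hat{A}^*-2\pi i\sum x_\mu dz_\mu$, which is what forces the multiplier to be $\exp(+2\pi i z(x))$ rather than its conjugate.
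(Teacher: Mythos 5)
Your proof is correct and takes essentially the same route as the paper: the Dirac statement reduces to the gauge-intertwining identity for the multiplier $e^{2\pi i z(x)}$ together with the dual-connection formula $\hat{A}_x^* = \hat{A}^* - 2\pi i\sum x_\mu\,dz_\mu$, and the invariance statement follows from $G_z(e^{-2\pi i z}s)=e^{-2\pi i z}G_0 s$ plus the observation that $f\in E_x^*$ evaluates at $x$. Your writeup is merely more explicit than the paper's, which compresses the first step into ``the first claim is clear'' and does the second in the $\hat{f}^j/\hat{f^j}^*$ basis notation rather than the functional form you use.
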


\begin{proof}
Using $\hat{D}^-_{\hat{A}_x^* }=\hat{D}^-_{\hat{A}^* }-2\pi i\sum_{\mu=1}^4 x_\mu \hat{c}_\mu$, the first claim is clear.  
We consider the $z$ dependence of the construction. 
\[
\begin{split}
\exp(2\pi i z(x) ) G\Psi(f)(z)&=\sum f\circ (G_z\psi^j_z)\hat{f^j}^*(z) \exp(2\pi i z(x))
\\
&=\sum f\circ (\exp(-2\pi iz)G_0\psi^j_0)\hat{f^j}^*(z) \exp(2\pi i z(x)) \\
&=\exp(2\pi i z)\sum f\circ (G_0\psi^j_0)\hat{f^j}^*(0) ,
\end{split}
\]
where we used the fact the $f\in E_x^*$ is located above the point $x\in B$.
This means the element is invariant.
\end{proof}
 
Recall there is a complex antilinear automorphism of the spin bundle, denoted $\epsilon: S\rightarrow S$, which preserves the chiral splitting $S=S_+\oplus S_-$ and the Clifford multiplication. Tensoring this with the canonical isomorphism $E\simeq E^*$, one obtains an antilinear isomorphism
\begin{equation}
\hat{\epsilon}: E\otimes S\simeq E^*\otimes S,
\end{equation}
which is checked to preserve the Dirac equation.

This allows us to define a \textbf{canonical comparison map} $\alpha: E \rightarrow \hat{\hat{E} }$ by
\begin{equation}
\xi \mapsto  4\pi \exp(-2\pi i \langle \cdot , x \rangle)   \langle \hat{\epsilon}(G\Psi), \xi \rangle=4\pi \exp(-2\pi i \langle z, x \rangle)   \sum_j \langle \hat{\epsilon}(G_z \psi^j_z), \xi \rangle f^j(z),
\end{equation}
\ie we follow the composition $E_x \simeq E^*_x \rightarrow \ker \hat{D}^-_{\hat{A}_x^*} \simeq \ker \hat{D}^-_{\hat{A}_x}$.

The adjoint operator of this canonical map $E_y\to \hat{\hat{E}}_y\subset H^2_D\otimes S_-$ can be described as follows. Identify the space of invariant sections of $\hat{E}$ with $H^2_D$, then for $\zeta \in H^2_D\otimes S_-$, the adjoint operator maps it into an element of $E_y$, given by
\[
\zeta \mapsto 4\pi (\text{Tr}_{S_-}(\epsilon \circ G_0) \zeta )(y), \quad H^2_D\otimes S_-\to E_y,
\]
where the $\epsilon\circ G_0$ sends the $H^2_D$ factor to $\Gamma(E)\otimes S_-$, so we can then contract the two copies of $S_-$ to get some element in $\Gamma(E)$, and finally evaluate at the point $y\in B$.

\subsection{An example: the trivial line bundle}

Let us work out explicitly what this construction means for $(E,A)$ being the trivial flat line bundle, and $B=B(R)\subset \R^4$ is a ball. Then we can pick $\xi$ to be a unit length basis vector, and henceforth suppress it. Modulo the twisting $\epsilon$, the canonical map $E\rightarrow H^2_D\otimes S_-$ essentially comes down to the following description: calculate the composition  
\[
H^2_D \xrightarrow{G_0} W^{2,1}_0\cap C^\infty(B, E\otimes S_-) \xrightarrow{\text{evaluate} } E_y\otimes S_-\simeq S_-,
\]
and then write this (spinor-valued) functional on $H^2_D$ using Riesz representation.

The Green's function for the Euclidean Laplacian on the ball is well known. We recall the formula
\[
\begin{cases}
G(p,0)=\frac{1}{4\pi^2} \{  |p|^{-2}-R^{-2}     \} \\
G(p,q)=\frac{1}{4\pi^2} \{  |p-q|^{-2}-|p\frac{|q|}{R}-q\frac{R}{|q|}            |^{-2}     \}=G(q,p).
\end{cases}
\]
We treat $p$ as the independent variable, and $q$ is a parameter.
In the spinorial situation, the Green's function is merely the above tensored by a parallel spinor field $\eta$. We calculate the projection of the Green's function to the space $H^2_D$,
\[
\begin{cases}
G(p,0)\otimes \eta=D \{  -\frac{p\cdot \eta}{8\pi^2} (  |p|^{-2}-R^{-2}                )   \}+ \frac{\eta}{4\pi^2 R^2} \\
G(p,q)\otimes \eta =D \{  -\frac{1}{2} (p-q)\cdot \eta G(p,q)                                            \}+ \frac{1}{4\pi^2} (\frac{R^2}{|q|^2}-1) \frac{R^2}{|q|^2} \frac{p-q\frac{R^2}{|q|^2} }{|  p-q\frac{R^2}{|q|^2}         |^{4}  }\cdot    q\cdot \eta.
\end{cases}
\]

This means for $s\in H^2_D$,
\[
\begin{split}
\hat{\epsilon}Gs(q)=& 
\hat{\epsilon} \int_{B(R)}    G(p,q )s(p)dp \\
=& \hat{\epsilon} \sum_{\nu=1}^{2}\int_{B(R)}  \eta_\nu \otimes \langle G(p,q )\eta_\nu , s(p) \rangle dp \\
=&   \sum_{\nu=1}^{2}  \hat{\epsilon}\eta_\nu \otimes \langle  
\frac{1}{4\pi^2} (\frac{R^2}{|q|^2}-1) \frac{R^2}{|q|^2} \frac{p-q\frac{R^2}{|q|^2} }{|  p-q\frac{R^2}{|q|^2}         |^{4}  } \cdot   q\cdot \eta_\nu
, s(p) \rangle .
\end{split}
\]
Thus the image of the canonical map $\alpha: E_y \to \hat{\hat{E}}_y\subset H^2_D\otimes S_-$ is
\begin{equation}
\alpha(1)=\begin{cases}
\sum_{\nu} \epsilon{\eta_\nu} \otimes \frac{1}{\pi} (\frac{R^2}{|y|^2}-1) \frac{R^2}{|y|^2} \frac{x-y\frac{R^2}{|y|^2} }{|  x-y\frac{R^2}{|y|^2}         |^{4}  } \cdot   y\cdot \eta_\nu \in S_-\otimes H^2_D \quad & 0<|y|<R,\\
\sum_{\nu} \epsilon{\eta_\nu} \otimes \frac{\eta_\nu}{4\pi R^2}\quad&  y=0.
\end{cases}
\end{equation}
Geometrically, the Dirac field $\frac{x-y\frac{R^2}{|y|^2} }{|  x-y\frac{R^2}{|y|^2}         |^{4}  } \cdot   y\cdot \eta_\nu$ is proportional to the fundamental solution to the Dirac equation with source at the inversion point of $y$; when $y\to 0$ the source moves to infinity and the suitably rescaled solution converges smoothly to the constant Dirac field.

It is interesting to observe the limiting behaviour of this formula near the boundary of $B(R)$. For $|y|\sim R$, write $y=Rl\cos\theta$, where $\theta\sim 0$, and $l$ is a unit vector. Write also $x=lR+\frac{1}{2} x'R\theta^2$, then the formula has the asymptote
\begin{equation}
\sum_{\nu} \epsilon{\eta_\nu}  \otimes \frac{1}{\pi} \theta^2 \frac{x-lR-\frac{1}{2} lR\theta^2  } {| x-lR-\frac{1}{2} lR\theta^2         |^{4}  } \cdot   lR\cdot \eta_\nu
\sim  
\sum_{\nu} \epsilon{\eta_\nu}  \otimes \frac{8}{\pi R^2 \theta^4}  \frac{x'-l } {|  x'-l      |^{4}  } \cdot   l\cdot \eta_\nu.      
\end{equation}
If we view this na\"ively as a function of $x$, then the pointwise limit will be zero, because the Dirac field concentrates near $lR$ with distance scale $\theta^2$. if we extract an appropriately rescaled limit, we obtain a Dirac field in some half space, generated by a point charge placed at unit distance outside the boundary plane. 

\subsection{Nonsingularity}


This Section studies the \textbf{Laplacian} $\hat{\Lap}_{\hat{A}_y}=\hat{\nabla}_{\hat{A}_y}^*\hat{\nabla}_{\hat{A}_y}$ on the local Nahm transform bundle $\hat{E}$, where $y\in \R^4$. The importance of this operator for our construction is based on the observation that since $\hat{A}_y$ is ASD, on the positive spin $\hat{E}\otimes S_+$, we have
\begin{equation}
\hat{\nabla}_{\hat{A}_y}^*\hat{\nabla}_{\hat{A}_y}=\hat{D}_{\hat{A}_y}^-\hat{D}_{\hat{A}_y}^+.
\end{equation}

We start with a matrix description of this Laplacian acting on invariant sections of $\hat{E}$. Recall that $s\in H^2_D$ is canonically identified with the invariant section $s e^{-2\pi i z}$.

\begin{lem}\label{LaplacianonNahmtransform}
Under the identification, the Laplacian can be identified with the operator $4\pi^2 P_0 \circ \{|x-y|^2 -4G_0\}$ acting on $H^2_D$.	
\end{lem}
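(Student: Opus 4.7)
The strategy is to translate the Laplacian to $H^2_D$ via the ADHM dictionary of the preceding section, and then expand the double projection using the formula $P_0 = 1 - D_A^+ G_0 D_A^-$.

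Under the canonical trivialisation $\hat{E}_z = \exp(-2\pi i z)\hat{E}_0 \simeq H^2_D$, the connection matrix computation before Lemma on the ADHM equation shows that on invariant sections $\hat{\nabla}^\mu = -2\pi i \hat{x}_\mu$, so after coupling with $y$ we have $\hat{\nabla}^\mu_{\hat{A}_y} = -2\pi i(\hat{x}_\mu - y_\mu) = -2\pi i\, P_0 \circ (x_\mu - y_\mu)$. Since each $\hat{x}_\mu - y_\mu$ is Hermitian on $H^2_D$,
\[
\hat{\Lap}_{\hat{A}_y} \;=\; \sum_\mu (\hat{\nabla}^\mu_{\hat{A}_y})^*\hat{\nabla}^\mu_{\hat{A}_y} \;=\; 4\pi^2 \sum_\mu (\hat{x}_\mu - y_\mu)^2
\]
as an operator on $H^2_D$. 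The lemma therefore reduces to the identity $\sum_\mu P_0(x_\mu - y_\mu)\,P_0(x_\mu - y_\mu) = P_0\{|x-y|^2 - 4 G_0\}$ on $H^2_D$.

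Write $u_\mu = x_\mu - y_\mu$ and peel off the inner projection. For $s\in H^2_D$ one has $D_A^- s = 0$ and $[D_A^-, x_\mu] = c_\mu^-$, the chiral block $S_-\to S_+$ of Clifford multiplication, so $D_A^-(u_\mu s) = c_\mu^- s$ and $P_0(u_\mu s) = u_\mu s - D_A^+ G_0 c_\mu^- s$. Now apply $P_0 u_\mu$ to this expression and sum over $\mu$. The first piece contributes $P_0 |x-y|^2 s$. For the second piece, use $P_0 D_A^+ = 0$ (since $\mathrm{im}\,D_A^+\perp H^2_D$) to rewrite $P_0 u_\mu D_A^+ = P_0[u_\mu,D_A^+] = -P_0 c_\mu^+$, and then use the fact that $G_0$, being the coupled scalar Green's operator on $E$ extended componentwise to the trivial spinor factors, commutes with Clifford multiplication. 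This gives $P_0 u_\mu D_A^+ G_0 c_\mu^- s = -P_0 c_\mu^+ c_\mu^- G_0 s$. Finally the Clifford relation $c_\mu^2 = -1$ yields $\sum_\mu c_\mu^+ c_\mu^- = -4\,\mathrm{id}_{S_-}$, so that summing over $\mu$ gives $\sum_\mu P_0 u_\mu D_A^+ G_0 c_\mu^- s = 4 P_0 G_0 s$, and combining the two pieces produces the claimed identity.

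The only genuine subtlety is bookkeeping the chiral Clifford pieces $c_\mu^\pm$ together with the implicit extension of $G_0$ from $E$-valued sections to the spinor bundles $E\otimes S_\pm$; once the sign and adjointness conventions of the paper are in hand, the proof is a short commutator calculation, with the Clifford contraction $\sum_\mu c_\mu^+ c_\mu^- = -4$ producing the precise numerical coefficient $-4$ in front of $G_0$.
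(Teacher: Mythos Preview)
Your argument is correct. The key points --- that $G_0$ extended to $E\otimes S_+$ inverts $D_A^-D_A^+$ by Weitzenb\"ock, that it commutes with Clifford multiplication on the spin factor, and that $P_0 D_A^+$ vanishes on the image of $G_0$ because $G_0$ lands in $W^{2,1}_0$ --- are all sound, and the commutator bookkeeping with the chiral blocks $c_\mu^\pm$ checks out.

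The paper takes a different route: rather than expanding $P_0 = 1 - D_A^+ G_0 D_A^-$ and computing the operator directly, it works at the level of quadratic forms. It writes the orthogonal decomposition $(x_\mu - y_\mu)s = D\tau_\mu + \sigma_\mu$ with $\tau_\mu \in W^{2,1}_0$, identifies $\tau_\mu = c_\mu G_0 s$ by uniqueness for the Dirichlet Laplacian, sums the Pythagorean identities $\|(x_\mu - y_\mu)s\|^2 = \|\nabla G_0 s\|^2 + \|\sigma_\mu\|^2$, and recognises the result as $\langle s, P_0(|x-y|^2 - 4G_0)s\rangle = \sum_\mu \|\sigma_\mu\|^2$; self-adjointness then upgrades this to an operator identity. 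Your approach is more algebraic and arguably cleaner as a standalone proof. The paper's approach has the advantage that the decomposition $(x_\mu - y_\mu)s = D(c_\mu G_0 s) + \sigma_\mu$ and the identity $\sum_\mu\|\sigma_\mu\|^2 = \langle s, P_0(|x-y|^2-4G_0)s\rangle$ are reused immediately afterwards in proving that $\ker\hat{\Lap}_{\hat{A}_y}=0$, so the intermediate objects do double duty.
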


\begin{proof}
Let $s\in H^2_D$ and $\tau=G_0s$. Then by Green's formula
\[
\int_{B}  \langle \tau, s \rangle=\int_{B}   |\nabla \tau|^2 .
\]
We consider the $L^2$ decomposition 
\begin{equation}\label{L2decompositionequation1}
(x_\mu-y_\mu) s= D \tau_\mu+\sigma_\mu,
\end{equation}
with $\tau_\mu \in W^{2,1}_0$ and $D \sigma_\mu=0$. Then we observe \[
\nabla^*\nabla \tau_\mu=D^2 \tau_\mu=D( (x_\mu-y_\mu) s   )=c_\mu s=\nabla^*\nabla c_\mu \tau,
\]
 so in fact, by the uniqueness of solutions to the Laplace equation, 
\begin{equation}\label{taumu}
\tau_\mu=c_\mu \tau.
\end{equation} Then we compute
\[
\norm{(x_\mu-y_\mu)s}_{L^2}^2=\norm{D \tau_{\mu}}_{L^2}^2 +\norm{\sigma_\mu}_{L^2}^2=\norm{\nabla \tau}_{L^2}^2 +\norm{\sigma_\mu}_{L^2}^2
\]
Summing over $\mu$, we get
\[
\int_{B} |x-y|^2 |s|^2=4\int_{B}   |\nabla \tau|^2 +\sum_{\mu} \norm{\sigma_\mu}_{L^2}^2= 4 \langle \tau, s \rangle+  \sum_{\mu} \norm{\sigma_\mu}_{L^2}^2.
\]
This equation has the interpretation
\begin{equation}\label{LaplacianonNahmtransform2}
	\langle s, P_0(   \{ |x-y|^2-4G_0\}  s      )\rangle=\sum_{\mu} \norm{\sigma_\mu}_{L^2}^2
	= \sum_{\mu } \norm{ P_0\circ (x_\mu-y_\mu) s }_{L^2}^2
	,
\end{equation}
Under the identification, the directional derivatives $\nabla^\mu_{\hat{A}_y }$ can be identified as 
$-2\pi i P_0 \circ (x_\mu-y_\mu)$ (\cf Section \ref{formalcomparisonwithADHM}). Thus
\[
\langle s, 4\pi^2 P_0(   \{ |x-y|^2-4G_0\}  s      )\rangle= \sum_{\mu } \norm{ \nabla^\mu_{\hat{A}_y} s}^2= \langle s, \hat{\Lap}_{ \hat{A}_y }s \rangle.
\]
Since this holds for all $s$, the self adjoint operators $4\pi^2 P_0\circ  \{ |x-y|^2-4G_0\}$ and $\hat{\Lap}_{\hat{A}_y}$ must be the same.
\end{proof}

\begin{rmk}
One can also derive the Laplacian formula by a calculation similar to Lemma 2.3, Lemma 2.4 in \cite{BraamandBaal}.
\end{rmk}

These preparations lead to

\begin{prop}
The kernel of $\hat{\Lap}_{\hat{A}_y }$ on the space of invariant sections of $\hat{E}$ vanishes for $y\in \R^4$.
\end{prop}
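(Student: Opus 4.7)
The plan is to use the characterization of the kernel afforded by Lemma \ref{LaplacianonNahmtransform} and the identity (\ref{LaplacianonNahmtransform2}): an $s\in H^2_D$ lies in $\ker\hat{\Lap}_{\hat{A}_y}$ if and only if $\sigma_\mu=0$ for every $\mu$, i.e.\ if and only if $(x_\mu-y_\mu)s = D(c_\mu\tau)$ with $\tau = G_0 s\in W^{2,1}_0$. From this I would extract a first-order relation between $\tau$ and $s$. Expanding $D(c_\mu\tau) = -2\nabla_\mu\tau - c_\mu D\tau$ via the Clifford relations and Clifford-contracting over $\mu$ gives $D\tau = \tfrac{1}{2}c(x-y)s$, where $c(x-y) := \sum_\mu (x_\mu-y_\mu)c_\mu$; substituting back yields the pointwise relation
\[
\nabla_\mu \tau \;=\; -\tfrac{1}{2}(x_\mu-y_\mu)\,s \;-\; \tfrac{1}{4}\,c_\mu c(x-y)\,s.
\]
Inverting the Clifford-contracted form gives $s = -\tfrac{2}{|x-y|^2}c(x-y)D\tau$ on $B\setminus\{y\}$; combined with Dirichlet elliptic regularity applied to $\nabla^*\nabla\tau = s$, a standard bootstrap upgrades $s$ from $L^2$ to a smooth section on $\bar{B}\setminus\{y\}$, giving a classical boundary trace.

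Next I would exploit the Dirichlet boundary condition $\tau|_{\partial B}=0$, which forces every tangential derivative of $\tau$ to vanish on $\partial B$. Evaluating the first-order relation at a boundary point $p\neq y$ against a tangential vector $v$ gives
\[
2\langle v,p-y\rangle s(p) + c(v)c(p-y)s(p) \;=\; 0.
\]
The Clifford identity $c(v)c(p-y)+c(p-y)c(v) = -2\langle v,p-y\rangle$ collapses this to $c(p-y)c(v)s(p)=0$. Since $p\neq y$ makes $c(p-y)$ invertible, and $c(v)$ is invertible for any nonzero $v$, choosing any one nonzero tangential $v$ forces $s(p)=0$. Hence $s$ vanishes on $\partial B$ (with at most the isolated exception $p=y$ when $y\in\partial B$, handled by continuity and the $L^2$ boundary trace).

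To close, I would invoke unique continuation. Extend $s$ by zero to $\tilde{s}$ on $\R^4$ and $A$ smoothly to $\tilde{A}$. The distributional boundary jump $-c(\nu)s|_{\partial B}\delta_{\partial B}$ vanishes because $s|_{\partial B}=0$, so $D_{\tilde{A}}\tilde{s}=0$ globally on $\R^4$ in the distributional sense. Interior elliptic regularity then makes $\tilde{s}$ smooth, and since $\tilde{s}$ vanishes on the nonempty open set $\R^4\setminus\bar{B}$, unique continuation for the Dirac operator forces $\tilde{s}\equiv 0$, so $s=0$ as claimed.

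The main technical obstacle is the boundary regularity bootstrap required to render $s|_{\partial B}$ meaningful: starting only from $s\in L^2$, one must iterate the algebraic identity $s = -\tfrac{2}{|x-y|^2}c(x-y)D\tau$ against Dirichlet elliptic regularity for $\tau$ to promote $s$ to a smooth section up to the boundary away from $y$. Once this is in place, the Clifford algebra on the boundary and unique continuation are comparatively clean final steps; the case $y\in\partial B$ is a minor addendum handled by density.
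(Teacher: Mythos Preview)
Your argument is correct and complete, but it diverges from the paper's route after the common starting point $\sigma_\mu=0$, i.e.\ $(x_\mu-y_\mu)s=D(c_\mu\tau)$ with $\tau=G_0 s$. The paper next observes that the quantity $\{|x-y|^2-4G_0\}s$ is both in $H^2_D$ (by a short Clifford computation) and orthogonal to $H^2_D$ (by the kernel condition), hence vanishes identically. This gives the pointwise identity $|x-y|^2 s = 4\tau$, which immediately yields $s|_{\partial B}=0$ without any regularity bootstrap or tangential-derivative argument. The paper then pushes further and integrates the resulting first-order system to the closed form $s = |x-y|^{-4}(x-y)\cdot\rho$ with $\rho$ covariant constant in $E\otimes S_+$, and concludes from this explicit formula together with the zero boundary data (or, for $y\in \bar B$, from the failure of $L^2$ integrability). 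By contrast, you never isolate the identity $|x-y|^2 s=4\tau$; instead you obtain $s|_{\partial B}=0$ through the tangential-derivative Clifford trick (which requires your boundary bootstrap), and then finish with extension-by-zero and unique continuation for $D_{\tilde A}$. Your approach is more ``soft'' and avoids the explicit integration, at the cost of invoking unique continuation and a smooth extension of $A$; the paper's approach is more self-contained and, as the authors remark, the explicit profile $s=|x-y|^{-4}(x-y)\cdot\rho$ is itself informative, since it foreshadows the asymptotics expected near a curvature singularity.
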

\begin{proof}

An invariant section is the same data as an element $s\in H^2_D$. Then the kernel of $\hat{\Lap}_{\hat{A}_y }$ translates into the condition
\begin{equation}\label{kernelofLaplacian}
P_0(   \{ \sum_{\mu} (x_\mu-y_\mu)^2-4G_0\}  s      )=0.
\end{equation}
Comparing with the proof of Lemma \ref{LaplacianonNahmtransform}, this implies (\cf (\ref{LaplacianonNahmtransform2}))
\[
\sum_{\mu}  \norm{\sigma_\mu}_{L^2}^2=0,
\]
hence $\sigma_\mu=0$ for all $\mu$.
Now combining (\ref{L2decompositionequation1}), (\ref{taumu}),
\begin{equation}\label{LaplacianNahmtransform1}
\begin{split}
D(  \{ |x-y|^2-4G_0\}  s )
&=2\sum_{\mu } (x_\mu-y_\mu )c_\mu s-4 D\tau
=2 \sum_{\mu} c_\mu D\tau_\mu -4D\tau \\
&=2\sum c_\mu D(c_\mu \tau)-4D\tau=0,
\end{split}
\end{equation}
so $\{ |x-y|^2-4G_0\}  s $ is both orthogonal to $H^2_D$ (by equation (\ref{kernelofLaplacian})) and inside $H^2_D$ (by equation (\ref{LaplacianNahmtransform1})), and therefore
\[
|x-y|^2  s=4\tau.
\]
In particular $s$ has zero boundary data. Substituting this into (\ref{L2decompositionequation1}), we get
\[
\begin{split}
(x_\mu-y_\mu)s&=D(c_\mu \frac{1}{4}|x-y|^2 s)=-c_\mu D(\frac{1}{4}|x-y|^2 s) -2\nabla_\mu (\frac{1}{4}|x-y|^2 s) \\
&=-c_\mu \frac{1}{2}(x-y) \cdot s -(x_\mu-y_\mu)s- \frac{1}{2}|x-y|^2 \nabla_\mu s,
\end{split}
\]
So for any $\mu$,
\[
|x-y|^2 \nabla_\mu s=-4(x_\mu-y_\mu)s-\sum_{\nu}(x_\nu-y_\nu)c_\mu c_\nu s.
\]
From this 
\[
\nabla_{A} \{|x-y|^2 (x-y)\cdot s\}=0.
\]
In other words,
\begin{equation}\label{asymptoteofs}
s=\frac{(x-y)\cdot \rho}{|x-y|^4}.
\end{equation}
where $\rho$ is covariant constant in $E\otimes S^+$.

This forces $s=0$, by the zero boundary condition.
\end{proof}

\begin{rmk}
This proof suggests the mechanism of singularity formation: if the discussion is extended to singular connections, then the kernel of $\hat{\Lap}_{\hat{A}_y }$ can be nonzero, and (\ref{asymptoteofs}) is expected to describe the asymptotic profile of the kernel elements near the singularity.
\end{rmk}

Our next aim is to justify the invertibility of $\hat{\Lap}_{\hat{A}_y }$, acting on the space of invariant sections of $\hat{E}$, identified as $H^2_D$. We recall the Laplacian is always self-adjoint and semipositive. In our context it is also bounded. So it is enough to prove

\begin{lem}
For $y\not \in \partial B$,
there is a coercive estimate on the space of invariant sections
\begin{equation}
\norm{\hat{\Lap}_{\hat{A}_y } (se^{-2\pi i z})} \geq C \norm{se^{-2\pi i z} }.
\end{equation}
Here the norm of an invariant section is the norm on any fibre $\hat{E}_z$. The constant can be taken to be uniform for $y$ in any compact subset of $B$.
\end{lem}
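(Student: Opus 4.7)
The plan is to reduce the estimate to a spectral lower bound on the operator $T := \hat{\Lap}_{\hat{A}_y}$, which by Lemma \ref{LaplacianonNahmtransform} is identified on $H^2_D$ with $4\pi^2 P_0 \circ (|x-y|^2 - 4 G_0)$. This operator is bounded, self-adjoint, and semipositive on $H^2_D$ (positivity being the identity $\langle s, Ts\rangle = 4\pi^2 \sum_\mu \|\sigma_\mu\|^2$ from the proof of Lemma \ref{LaplacianonNahmtransform}), so the desired coercivity $\|Ts\| \geq C \|s\|$ is equivalent to $\langle s, Ts\rangle \geq C \|s\|^2$ on $H^2_D$; both amount to $T \geq CI$ via functional calculus. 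I will argue by compactness and contradiction: if no uniform constant $C > 0$ works for $y$ in a compact $K \Subset B$, one obtains sequences $y_n \in K$ and $s_n \in H^2_D$ with $\|s_n\|_{L^2} = 1$ and $\|T_{y_n} s_n\|_{L^2} \to 0$.

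After passing to subsequences, $y_n \to y_\infty \in K$ and $s_n \rightharpoonup s_\infty$ weakly in $L^2$. Since $\||x-y_n|^2 - |x-y_\infty|^2\|_{L^\infty(B)} \to 0$, the operator $T_{y_n} - T_{y_\infty}$ has vanishing norm, and together with $\|T_{y_n} s_n\| \to 0$ this forces $T_{y_\infty} s_n \to 0$ strongly in $L^2$. Taking weak limits yields $T_{y_\infty} s_\infty = 0$, so by the preceding non-singularity proposition, $s_\infty = 0$. By Rellich's theorem $G_0 : L^2 \to W^{2,1}_0 \hookrightarrow L^2$ is compact, hence $G_0 s_n \to 0$ strongly and $\langle s_n, G_0 s_n\rangle \to 0$. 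Combining this with $\langle s_n, T_{y_\infty} s_n\rangle \to 0$ (Cauchy--Schwarz) and the quadratic form identity $\langle s, T_y s\rangle = 4\pi^2 \int_B |x-y|^2 |s|^2 - 16\pi^2 \langle s, G_0 s\rangle$, we conclude $\int_B |x - y_\infty|^2 |s_n|^2 \to 0$.

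Choose $r > 0$ with $2r < \mathrm{dist}(K, \partial B)$ and set $\bar U := \bar B(y_\infty, r) \subset B$. On $B \setminus U$ one has $|x - y_\infty|^2 \geq r^2$, whence $\|s_n\|_{L^2(B \setminus U)} \to 0$. On the other hand, interior elliptic regularity for the smooth Dirac equation $D_A s_n = 0$ yields a uniform bound $\|s_n\|_{C^1(\bar U)} \leq C \|s_n\|_{L^2(B)} = C$; by Arzel\`a--Ascoli a subsequence converges in $C^0(\bar U)$, its limit agreeing with the weak-$L^2$ limit $s_\infty = 0$, so $\|s_n\|_{L^\infty(\bar U)} \to 0$ and hence $\|s_n\|_{L^2(U)} \to 0$. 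Combining the two pieces contradicts $\|s_n\|_{L^2(B)} = 1$. The case of $y$ in a compact subset of $\R^4 \setminus \bar B$ is easier since $|x-y|^2 \geq \mathrm{dist}(y, \bar B)^2 > 0$ gives the required lower bound directly. The principal difficulty is that $H^2_D \hookrightarrow L^2(B)$ is \emph{not} compact, so strong $L^2$ convergence cannot be extracted in a single step; the resolution is the dichotomy that the mass of $s_n$ must either escape into $B \setminus U$ (forbidden by the multiplicative coercivity $|x-y|^2 \geq r^2$ there) or concentrate at $y$ (forbidden by interior elliptic estimates for the Dirac equation).
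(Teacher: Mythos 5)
Your proof is correct and takes a genuinely different, arguably cleaner route than the paper's. Both are compactness-by-contradiction arguments that ultimately appeal to the non-singularity proposition $\ker\hat{\Lap}_{\hat{A}_y}=0$; the difference is in how $L^2$-mass loss is ruled out. The paper bounds $\tau_m = G_0 s_m$ in $L^p$ for a large exponent $p$, estimates $\int_B|x-y|^2|s_m|^2$ against weighted Lebesgue norms of $s_m$, argues rather delicately that the norm cannot escape entirely to the boundary, and then passes the kernel equation to the weak limit. You instead observe that $G_0\colon L^2\to L^2$ is compact by Rellich, so $s_n\rightharpoonup 0$ already forces $\langle G_0 s_n, s_n\rangle\to 0$; combined with the quadratic-form identity $\langle s,\hat{\Lap}_{\hat{A}_y}s\rangle = 4\pi^2\int_B|x-y|^2|s|^2 - 16\pi^2\langle G_0 s,s\rangle$ extracted from Lemma~\ref{LaplacianonNahmtransform}, this yields the much stronger conclusion $\int_B|x-y_\infty|^2|s_n|^2\to 0$. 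The contradiction then becomes transparent: on $B\setminus B(y_\infty,r)$ the weight satisfies $|x-y_\infty|^2\geq r^2$, so mass there vanishes; on $\overline{B(y_\infty,r)}$ interior elliptic regularity for $D_A s_n=0$ together with $s_\infty=0$ forces uniform $C^0$ decay; and together these contradict $\norm{s_n}_{L^2}=1$. Your argument also lets $y_n$ vary, giving the stated uniformity over compact subsets of $B$ explicitly, whereas the paper's written proof fixes $y$. One minor imprecision: for $y$ in a compact subset of $\R^4\setminus\bar B$, the pointwise bound $|x-y|^2\geq d^2>0$ does not give coercivity ``directly,'' since the quadratic form still carries the negative term $-16\pi^2\langle G_0 s,s\rangle$; but the same compactness argument applies, and the endgame is even easier because the weight is then bounded below on all of $B$, making the interior elliptic step unnecessary.
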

\begin{proof}
Since we already know the kernel is zero, the strategy for the coercive estimate is a compactness argument, which is delicate due to norm collapsing issues. Suppose we have a sequence $s_m \in H^2_D$, with unit norms, satisfying
\[
\norm{P_0( \{ |x-y|^2-4G_0 \} s_m )}_{L^2}=\frac{1}{4\pi^2}\norm{\hat{\Lap}_{\hat{A}_y } (s_me^{-2\pi i z})} \leq 1/m .
\]
By interior regularity, we may assume $s_m$ converges smoothly to $s$ inside $B$. We associate $\tau_m=G_0 s_m$ as in the previous proposition. Then $\tau_m$ satisfies the elliptic equation $\Lap_{A} \tau_m=s_m$, with the zero boundary condition, so for any large Lebesgue exponent $p$ of our choice,
 \[
\norm{\tau_m}_{L^p} \leq C \norm{\tau_m}_{W^{2,2}_0} \leq C \norm{s_m}_{L^2}\leq C.\]

For small $\delta$, let $B_\delta=\{   
x\in B: \text{dist}(x, \partial B  )<\delta
\}$. Let $q$ the conjugate exponent of $p$, then
\[
\begin{split}
&\int_{B} |x-y|^2 |s_m|^2(x) dx
=4\int_{B}  \langle \tau_m, s_m \rangle +\langle P_0(   \{ |x-y|^2-4G_0\}  s_m), s_m \rangle \\
&\leq C \norm{s_m}_{L^q(B)}+\frac{1}{4\pi^2}\norm{\hat{\Lap}_{\hat{A}_y }(s_me^{-2\pi i z})} \\
&\leq C \norm{s_m}_{L^q(B\setminus B_\delta)}+ C \norm{s_m}_{L^q(B_\delta) }+1/m \\
&\leq C \norm{s_m}_{L^q(B\setminus B_\delta)}+C\delta^{-1/2+1/q}\norm{s_m}_{L^2(B_\delta)      }+1/m
.
\end{split}
\]

In general, if a higher Lebesgue norm of a function can be controlled by a lower Lebesgue norm, then the support of the function can not have arbitrarily small volume, \ie the function cannot be concentrated in region of small measure. If we replace the Lebesgue norm by a weighted version, then the function cannot be concentrated where the weight is bounded positively below. 
Hence the norm of $s_m$ cannot be lost entirely to the boundary:
\[
\norm{s}_{L^q(B)}\geq \liminf \norm{s_m}_{L^q(B\setminus B_\delta)} \geq C\text{dist}(y, \partial B)^2 .
\]
where $C$ depends only on $R$ and the connection $A$.
In particular the limit $s$ is nontrivial. Now we look at the equation
\[
|x-y|^2 s_m-4\tau_m=P_0( |x-y|^2 s_m-4\tau_m )+D t_m,\quad \tau_m=G_0 s_m,
\]
where $t_m$ is just some element in $W^{2,1}_0$. When we take the weak limit, we get
\[
|x-y|^2 s-4\tau=D t,\quad \tau=G_0 s.
\]
Thus despite the possiblity of partial norm collapsing, the limit $s$ still satisfies 
\[
P_0( \{  |x-y|^2 -4G_0             \} s )=0,
\]
or in other words, \[
\hat{\Lap}_{\hat{A} _y} (se^{-2\pi i z})=0.
\]
which implies $s=0$, a contradiction.
\end{proof}

Hence we finally achieved
\begin{prop}
For $y\notin \partial B$, 
the Laplacian $\hat{\Lap}_{\hat{A}_y }$ is invertible on the space of invariant sections of $\hat{E}$.
\end{prop}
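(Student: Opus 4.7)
The proposition is essentially a formal consequence of the two results already proved: that $\hat{\Lap}_{\hat{A}_y}$ has trivial kernel on the space of invariant sections, and that it satisfies a coercive estimate $\norm{\hat{\Lap}_{\hat{A}_y}(se^{-2\pi i z})} \geq C\norm{se^{-2\pi i z}}$ on the same space. My plan is to combine these with self-adjointness and boundedness to invoke a standard functional-analytic argument.

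First I would identify the invariant sections of $\hat{E}$ with the Hilbert space $H^2_D$ via the canonical trivialisation, and recall from Lemma \ref{LaplacianonNahmtransform} that under this identification $\hat{\Lap}_{\hat{A}_y}$ becomes the operator $4\pi^2 P_0\circ\{|x-y|^2-4G_0\}$. Because the multiplication by $|x-y|^2$ is bounded on $L^2(B)$ (with $B$ bounded) and $G_0:L^2\to L^2$ is bounded (in fact compact), the composition with the orthogonal projection $P_0$ yields a bounded operator on $H^2_D$. Self-adjointness and semipositivity are immediate from the variational characterisation $\langle s,\hat{\Lap}_{\hat{A}_y}s\rangle=\sum_\mu\norm{P_0\circ(x_\mu-y_\mu)s}_{L^2}^2$ (\cf (\ref{LaplacianonNahmtransform2})).

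Next, the coercive estimate from the previous lemma implies that $\hat{\Lap}_{\hat{A}_y}$ is bounded below, hence has closed range. Self-adjointness then gives the orthogonal decomposition $H^2_D=\Ker(\hat{\Lap}_{\hat{A}_y})\oplus \overline{\text{Range}(\hat{\Lap}_{\hat{A}_y})}$, and since the kernel is trivial by the previous proposition, the range is dense; combined with closedness, the operator is surjective. Together with injectivity, this gives a continuous bijection, so the open mapping theorem (or equivalently the coercive estimate itself) supplies a bounded inverse with $\norm{\hat{\Lap}_{\hat{A}_y}^{-1}}\leq C^{-1}$.

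There is no significant obstacle here: all the analytical work was absorbed into the proof of the coercive estimate, where the delicate point was ruling out norm collapse to the boundary of $B$. The only mild subtlety worth spelling out is uniformity of the bound: since the coercive constant $C$ is uniform for $y$ in compact subsets of $B$, so is the bound on the inverse, which will be important when we later treat $\hat{\Lap}_{\hat{A}_y}^{-1}$ as a continuous family of operators parametrised by $y\in B$ (needed, for example, to verify that the inverse Nahm transform bundle $\hat{\hat{E}}$ is genuinely a smooth subbundle of the trivial bundle $H^2_D\otimes S_-$).
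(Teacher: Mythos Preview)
Your proposal is correct and matches the paper's approach: the paper explicitly states just before the coercive lemma that the Laplacian is bounded, self-adjoint and semipositive, so invertibility reduces to the coercive estimate, and you have simply spelled out this standard functional-analytic step. One very minor redundancy: once you have the coercive estimate the kernel vanishing is automatic, so you need not invoke the earlier proposition separately (though of course that proposition was used \emph{inside} the proof of the coercive lemma).
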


\begin{rmk}
The subtle argument in the lemma involving norm collapsing reflects the genuine distinction between interior points and the boundary. The coercivity estimate is expected to fail on the boundary, so that $\hat{\hat{E} }$ would not extend to a smooth bundle on ${\R^4}$; otherwise the original ASD connection $A$ would extend to a global connection on $\R^4$, which would be very surprising.
\end{rmk}

We can therefore define the inverse operator $\hat{G}_y=(\hat{\Lap}_{\hat{A}_y } )^{-1}$, for $y \notin \partial B$, on the space of invariant sections of $\hat{E}$. We collect a few formal consequences:

\begin{cor}\label{cokernelvanishingonNahmtransform}
The operator $\hat{D}^-_{\hat{A}_y }:   \Gamma( \hat{\R^4}, \hat{E}\otimes S_-   ) \rightarrow  \Gamma( \hat{\R^4}, \hat{E}\otimes S_+   )  $, when restricted to the invariant sections, is a surjective map. Morever, the orthogonal projection from $H^2_D\otimes S_-$ to the kernel $\hat{\hat{E}  }_y$ is given by
\begin{equation}\label{projectionoperatorformula1}
\hat{P}_y=1- \hat{D}^+_{\hat{A}_y }  \hat{G}_y            \hat{D}^-_{\hat{A}_y }.
\end{equation}
\end{cor}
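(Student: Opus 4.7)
The plan is to deduce both claims as formal consequences of the invertibility of $\hat{\Lap}_{\hat{A}_y}$ established in the previous proposition, together with the Weitzenb\"ock identity. Since $\hat{A}_y$ is ASD, the self-dual curvature vanishes, so on $\hat{E}\otimes S_+$ one has $\hat{\nabla}^*_{\hat{A}_y}\hat{\nabla}_{\hat{A}_y}=\hat{D}^-_{\hat{A}_y}\hat{D}^+_{\hat{A}_y}$. The invertibility proposition applies componentwise to $\hat{E}\otimes S_+$ (the factor $S_+$ carries the trivial connection over $\hat{\R}^4$ and hence commutes with $\hat{\Lap}_{\hat{A}_y}$), yielding a bounded inverse $\hat{G}_y$ on invariant sections of $\hat{E}\otimes S_+$.

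For surjectivity, given an invariant section $\eta\in\Gamma(\hat{\R}^4,\hat{E}\otimes S_+)$, I set $\sigma=\hat{D}^+_{\hat{A}_y}\hat{G}_y\eta$. Invariance of $\sigma$ follows because the invariance condition is preserved by $\hat{\nabla}_{\hat{A}_y}$ and hence by $\hat{D}^+_{\hat{A}_y}$ and by $\hat{G}_y$. Then
\[
\hat{D}^-_{\hat{A}_y}\sigma=\hat{D}^-_{\hat{A}_y}\hat{D}^+_{\hat{A}_y}\hat{G}_y\eta=\hat{\Lap}_{\hat{A}_y}\hat{G}_y\eta=\eta,
\]
proving surjectivity on the invariant part.

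For the projection formula, I check the three characterising properties of an orthogonal projection onto $\hat{\hat{E}}_y=\ker\hat{D}^-_{\hat{A}_y}$. First, the image lies in the kernel: for any $s\in H^2_D\otimes S_-$,
\[
\hat{D}^-_{\hat{A}_y}\hat{P}_y s=\hat{D}^-_{\hat{A}_y}s-\hat{D}^-_{\hat{A}_y}\hat{D}^+_{\hat{A}_y}\hat{G}_y\hat{D}^-_{\hat{A}_y}s=\hat{D}^-_{\hat{A}_y}s-\hat{\Lap}_{\hat{A}_y}\hat{G}_y\hat{D}^-_{\hat{A}_y}s=0.
\]
Second, $\hat{P}_y$ fixes the kernel: if $\hat{D}^-_{\hat{A}_y}s=0$ then the correction term in $\hat{P}_y$ vanishes, giving $\hat{P}_y s=s$. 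Third, $\hat{P}_y$ is self-adjoint: $\hat{G}_y$ is self-adjoint as the inverse of a self-adjoint operator, and the adjoint of $\hat{D}^+_{\hat{A}_y}$ is $\hat{D}^-_{\hat{A}_y}$ (using that the Clifford operators are anti-self-adjoint per the paper's convention), so $(\hat{D}^+_{\hat{A}_y}\hat{G}_y\hat{D}^-_{\hat{A}_y})^*=\hat{D}^+_{\hat{A}_y}\hat{G}_y\hat{D}^-_{\hat{A}_y}$. These three facts together force $\hat{P}_y$ to coincide with the orthogonal projection onto $\hat{\hat{E}}_y$, and idempotency is automatic (or may be checked directly via $\hat{D}^-\hat{D}^+\hat{G}_y=\mathrm{id}$).

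The proof is essentially formal once the previous proposition is in hand; the only point requiring a little care is that $\hat{G}_y$ is defined only on invariant sections, so one must verify that all intermediate expressions stay in this invariant subspace. This is the only potential obstacle, and it is resolved by the observation that $\hat{D}^\pm_{\hat{A}_y}$ and $\hat{G}_y$ all commute with the invariance constraint, since the canonical trivialisation is parallel with respect to the relevant operations.
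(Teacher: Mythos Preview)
Your proof is correct and follows essentially the same approach as the paper: both use the Weitzenb\"ock identity $\hat{\Lap}_{\hat{A}_y}=\hat{D}^-_{\hat{A}_y}\hat{D}^+_{\hat{A}_y}$ to exhibit $\hat{D}^+_{\hat{A}_y}\hat{G}_y$ as an explicit right inverse, and both reduce the projection formula to the adjointness of $\hat{D}^+_{\hat{A}_y}$ and $\hat{D}^-_{\hat{A}_y}$ on invariant sections. Your write-up is simply more explicit than the paper's one-line sketch, in particular in verifying the three characterising properties of an orthogonal projection and in tracking that invariance is preserved throughout.
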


\begin{proof}
Since $\hat{A}_y$ is ASD, we have $
\hat{\Lap}_{\hat{A}_y }=\hat{D}^-_{\hat{A}_y }\hat{D}^+_{\hat{A}_y }
$. The surjectivity follows from the explicit formula for a preimage, given by the operator $\hat{D}^+_{\hat{A}_y }\hat{G}_y $. To see the orthogonal projection formula, one also needs to check $\hat{D}^+_{\hat{A}_y }$ and $\hat{D}^-_{\hat{A}_y }$ are adjoint on the invariant sections.
\end{proof}

\begin{cor}\label{doubletransformisASD}
The bundle $\hat{\hat {E}}$ is nonsingular over $y\not \in \partial B$. Morever, the natural connection $\hat{\hat {A}}$ is ASD.
\end{cor}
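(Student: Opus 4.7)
The plan is to run the proof in perfect parallel with the Proposition of Section 2.1 showing that the forward Nahm transform $(\hat{E},\hat{A})$ is ASD, with the roles of $(E,A)$ and $(\hat{E},\hat{A})$ interchanged. The two ingredients that drove that earlier argument, namely the invertibility of the coupled Laplacian and the vanishing of $\ker D^+$, have just been replaced in this setting by the invertibility of $\hat{\Lap}_{\hat{A}_y}$ on invariant sections and by Corollary \ref{cokernelvanishingonNahmtransform}. So the whole apparatus carries over.

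For the nonsingularity claim, I would argue as follows. The formula
\[
\hat{P}_y = 1 - \hat{D}^+_{\hat{A}_y}\,\hat{G}_y\,\hat{D}^-_{\hat{A}_y}
\]
from \eqref{projectionoperatorformula1} expresses $\hat{P}_y$ as a composition of operators each of which depends smoothly on $y$: the Dirac operators $\hat{D}^\pm_{\hat{A}_y}$ are affine in $y$ in view of the ADHM description $\hat{D}^-_{\hat{A}_y} = -2\pi i \sum_\mu (\hat{x}_\mu - y_\mu)\hat{c}_\mu$, while $\hat{G}_y = \hat{\Lap}_{\hat{A}_y}^{-1}$ varies smoothly by the standard resolvent calculus applied to the bounded self-adjoint family $\hat{\Lap}_{\hat{A}_y} = 4\pi^2 P_0\circ\{|x-y|^2 - 4G_0\}$, whose lower bound is uniform on compact subsets of $B\setminus\partial B$. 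Hence $y\mapsto \hat{P}_y$ is a smooth family of orthogonal projections on $H^2_D\otimes S_-$; its range is of locally constant rank and defines $\hat{\hat{E}}$ as a smooth Hermitian subbundle, with subbundle connection $\hat{\hat{A}} = \hat{P}_y\circ d_y$.

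For the ASD property, pick a local orthonormal frame $\hat{\hat{f}}^j(y) = \psi^j_y$ of $\hat{\hat{E}}$ over an open set of $B\setminus\partial B$. Form the connection matrix $\hat{\hat{A}}_{ij} = \langle \hat{\hat{f}}^i, d\hat{\hat{f}}^j\rangle$ with inner product in $H^2_D\otimes S_-$, and compute the curvature as in the proof of the first Proposition of Section 2.1, namely differentiating and applying $\hat{P}_y = 1 - \hat{D}^+_{\hat{A}_y}\hat{G}_y \hat{D}^-_{\hat{A}_y}$ to collapse the off-diagonal piece. This yields, analogously to \eqref{curvatureofNahmtransform},
\[
\hat{\hat{F}}_{ij} = -\langle \hat{\Omega}\wedge \hat{\Omega}\cdot \psi^i_y,\; \hat{G}_y\,\psi^j_y\rangle,\qquad
-\hat{\Omega}\wedge\hat{\Omega} = (2\pi)^2\sum_{\mu,\nu}dy_\mu\wedge dy_\nu\,\hat{c}_\mu \hat{c}_\nu.
\]
The Clifford identity $\hat{c}_\mu\hat{c}_\nu + \hat{c}_\nu\hat{c}_\mu = -2\delta_{\mu\nu}$ implies that $\sum_{\mu<\nu}dy_\mu\wedge dy_\nu\,\hat{c}_\mu\hat{c}_\nu$ acts on $S_-$ as an anti-self-dual two-form on $\hat{\R}^4\cong \R^4$, so $\hat{\hat{F}}$ is ASD.

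The main obstacle I anticipate is justifying that the formal Braam--van Baal curvature manipulation is legitimate in the present infinite-dimensional setting. The potentially delicate points are: the smoothness of $y\mapsto \psi^j_y$ needed to differentiate inside the $L^2$ inner product (granted by the smoothness of $\hat{P}_y$ above); the boundedness of $\hat{G}_y$ on the invariant sections (granted by the coercivity lemma); and the commutations between $\hat{D}^\pm_{\hat{A}_y}$, multiplication by $y$, and Clifford multiplication, which are now algebraic since the relevant operators are bounded on $H^2_D\otimes S_\pm$ rather than unbounded differential operators on $B$. Because of the latter, the computation is if anything cleaner than in the forward direction of Section 2.1, and the proof reduces to that purely algebraic identity.
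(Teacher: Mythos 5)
Your proposal matches the paper's proof: both derive nonsingularity from the smooth dependence of $\hat{P}_y=1-\hat{D}^+_{\hat{A}_y}\hat{G}_y\hat{D}^-_{\hat{A}_y}$ on $y$, and both obtain the ASD property from the curvature formula analogous to (\ref{curvatureofNahmtransform}), which is precisely what the paper records as (\ref{curvatureofdoubleNahmtransform}). Your frame-coefficient version $\hat{\hat{F}}_{ij}=-\langle\hat{\Omega}\wedge\hat{\Omega}\cdot\psi^i_y,\hat{G}_y\psi^j_y\rangle$ is the same operator as the paper's $-(2\pi)^2\hat{P}_y\hat{G}_y\otimes\sum\hat{c}_\mu\hat{c}_\nu\,dx_\mu\wedge dx_\nu$ once one uses that $\hat{G}_y$ commutes with Clifford multiplication on the $S_-$ factor, and the discussion of why the formal manipulation is legitimate in the operator setting is a useful elaboration but not a departure from the paper's route.
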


\begin{proof}
The nonsingularity is a formal consequence of a smooth formula for the orthogonal projection $\hat{P}_y$; here the word bundle is interpreted as a Hermitian vector bundle with fibres possibly being Hilbert spaces.

The ASD condition is implied by the following curvature formula for $\hat{\hat{A}}$ analogous to (\ref{curvatureofNahmtransform}), which relies on formula (\ref{projectionoperatorformula1}) for the projection operator. 
At the point $y\in B$, we have  $\hat{\hat{F }} \in \End( \hat{\hat {E}}_y) \otimes \Lambda^2T_y^*(\R^4)     $,
\begin{equation}\label{curvatureofdoubleNahmtransform}
\hat{\hat{F }}= -(2\pi)^2 \hat{P}_y\hat{G}_y \otimes \sum_{\mu, \nu} \hat{c}_\mu\hat{ c}_\nu dx_\mu \wedge dx_\nu,
\end{equation}
where $\hat{P}_y\hat{G}_y$ acts on the $\Gamma(\hat{E})$ factor of $\hat{\hat {E}}_y$, and $\hat{c}_\mu\hat{ c}_\nu$ acts on the spin factor. 
\end{proof}

We next wish to show the inverse Nahm transform vanishes in the exterior of $B$. 

\begin{cor}\label{doubletransformvanishesintheexterior}
The inverse Nahm transform $\hat{\hat {E}}_y=0$ for $y\notin \bar{B}$.
\end{cor}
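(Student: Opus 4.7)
The plan is to combine continuity of the orthogonal projection with an injectivity estimate at large $|y|$. By Corollary \ref{cokernelvanishingonNahmtransform} and Corollary \ref{doubletransformisASD}, the orthogonal projection $\hat{P}_y=1-\hat{D}^+_{\hat{A}_y}\hat{G}_y\hat{D}^-_{\hat{A}_y}$ onto $\hat{\hat{E}}_y\subset H^2_D\otimes S_-$ depends smoothly, hence continuously in operator norm, on $y\in\R^4\setminus\partial B$. The key observation is that the operator norm of any nonzero orthogonal projection equals $1$, so $\|\hat{P}_y\|\in\{0,1\}$. By continuity, the subset $\{y:\hat{P}_y=0\}$ is both open and closed in $\R^4\setminus\partial B$. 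Since $\bar{B}$ is diffeomorphic to a closed $4$-ball, $\R^4\setminus\bar{B}$ is a connected component of $\R^4\setminus\partial B$, so it suffices to exhibit a single point $y_0\in\R^4\setminus\bar{B}$ at which $\hat{\hat{E}}_{y_0}=0$.

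For this I would take $y_0$ with $|y_0|$ very large and work in the ADHM description of Section \ref{formalcomparisonwithADHM}. There $\hat{\hat{E}}_{y_0}=\ker T_{y_0}$ with $T_{y_0}=\sum_\mu(\hat{x}_\mu-y_{0,\mu})\hat{c}_\mu:H^2_D\otimes S_-\to H^2_D\otimes S_+$. Split $T_{y_0}=T^{(0)}_{y_0}+T^{(1)}$, where $T^{(0)}_{y_0}=-\sum_\mu y_{0,\mu}\hat{c}_\mu$ is Clifford multiplication by $-y_0$ and $T^{(1)}=\sum_\mu\hat{x}_\mu\hat{c}_\mu$ is independent of $y_0$. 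The Clifford relations yield $(T^{(0)}_{y_0})^*T^{(0)}_{y_0}=|y_0|^2$, so $\|T^{(0)}_{y_0}\sigma\|=|y_0|\|\sigma\|$. On the other hand $\|T^{(1)}\|\le\sum_\mu\|\hat{x}_\mu\|\le C$ for a constant $C$ depending only on $B$, since each $\hat{x}_\mu=P_0\circ x_\mu$ is bounded by the supremum of $|x_\mu|$ on $\bar{B}$. If $T_{y_0}\sigma=0$ then $|y_0|\|\sigma\|=\|T^{(0)}_{y_0}\sigma\|=\|T^{(1)}\sigma\|\le C\|\sigma\|$, forcing $\sigma=0$ as soon as $|y_0|>C$.

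Combining the two steps, $\hat{\hat{E}}_y=0$ for every sufficiently distant $y$, and by the open--closed dichotomy $\hat{\hat{E}}_y=0$ throughout the connected set $\R^4\setminus\bar{B}$. The main conceptual obstacle is ensuring that the infinite-dimensional nature of $H^2_D$ does not spoil a rank-constancy argument---which could in principle jump between finite values and infinity---and this is neatly circumvented by working with the operator norm of the projection instead of its rank. The injectivity estimate at infinity is then a routine perturbation in which the Clifford multiplication term dominates the bounded operator $T^{(1)}$.
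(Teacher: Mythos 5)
Your proposal is correct and follows the same two-step strategy as the paper: reduce to $|y|$ large by a constancy-over-connected-components argument, then kill the kernel there by treating $\sum(\hat x_\mu - y_\mu)\hat c_\mu$ as a bounded perturbation of the invertible Clifford multiplication by $-y$. The paper simply asserts ``the rank is constant'' in the exterior and works with the Dirac operator $\hat D^-_{\hat A_y}$ written as $2\pi i(1+\text{small})\sum y_\mu \hat c_\mu$, while you phrase the injectivity estimate in the ADHM language and, more usefully, replace the rank-constancy claim with the observation that $\|\hat P_y\|\in\{0,1\}$, so $\{\hat P_y=0\}$ is open and closed; this sidesteps any discomfort about what ``rank'' means a priori when the fibres could be infinite-dimensional Hilbert spaces, a point the paper elides (the Fredholm property of $T_y$ is only established later in Section \ref{Therankofdoubletransformandindexcomputation}). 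So the substance is identical, but your connectedness step is cleaner and self-contained.
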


\begin{proof}
Since we know the bundle is nonsingular in the exterior region, the rank is constant, so it is only necessary to show $\hat{\hat {E}}_y=0$ for $|y|>>1$. But we know
\[
\hat{D}^-_{\hat{A}_y}=\hat{D}^-_{\hat{A}}+2\pi i \sum y_\mu \hat{ c}_\mu= 2\pi i  (1+ \hat{D}^-_{\hat{A}}  (2\pi i \sum y_\mu \hat{c} _\mu        )^{-1}           )\sum y_\mu \hat{ c}_\mu,
\]
and $\hat{D}^-_{\hat{A}}$ acting on invariant sections is a bounded operator,
so for large $|y|$, the Dirac operator is merely a small perturbation of the invertible operator $2\pi i\sum  y_\mu \hat{c}_\mu $, hence itself invertible, implying the kernel $\hat{\hat {E}}_y=0$.
\end{proof}

\subsection{The canonical map}

To study the canonical map $\alpha: E \rightarrow \hat{\hat{E }}$, we look at 
\begin{equation}
v_x(z)=4\pi \exp(-2\pi i z(x))\sum_j \hat{\epsilon}(G_z \psi^j_z(x))\otimes  \hat{f^j}(z) \in E_x^* \otimes S_-\otimes \hat{E}_z,
\end{equation}
where $x \in B$ is a parameter. For $x_1,x_2\in B$,
we consider the correlator function $\langle v_{x_1}(z), v_{x_2}(z) \rangle_{\hat{E}_z \otimes S_-}$, meaning we contract the $\hat{E}_z$ and the $S_-$ factor, to get a matrix in $\Hom(E_{x_2}, E_{x_1})$. We aim to derive a formula for the correlators, by adapting arguments in \cite{BraamandBaal}.

\begin{prop} (\cf \cite{BraamandBaal}, Lemma 2.6)
The correlator is
 \begin{equation}\label{Greenfunctionembeddingdata}
\langle v_{x_1}(z), v_{x_2}(z) \rangle_{\hat{E} \otimes S_-}= 4\pi^2 G_0(x_1,x_2)|x_1-x_2|^2.
\end{equation}
which is independent of $z$.
\end{prop}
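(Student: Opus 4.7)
The plan is to follow \cite{BraamandBaal}, Lemma 2.6, with modifications for the local setting. First, I would reduce to computing the correlator at $z=0$. The canonical trivialization gives $\hat{f}^j(z) = e^{-2\pi i z}\psi_0^j$, and gauge covariance $G_z = e^{-2\pi i z}G_0 e^{2\pi i z}$ yields $(G_z\psi_z^j)(x) = e^{-2\pi i z(x)}(G_0\psi_0^j)(x)$. Combined with the antilinearity of $\hat{\epsilon}$ and the overall scalar phase $e^{-2\pi i z(x_k)}$ in $v_{x_k}(z)$, every occurrence of $z$ cancels inside the $\hat{E}_z\otimes S_-$-contraction, giving
\[
\langle v_{x_1}(z), v_{x_2}(z)\rangle_{\hat{E}_z \otimes S_-} = 16\pi^2 \sum_j \bigl\langle \hat{\epsilon}(G_0\psi_0^j(x_1)),\,\hat{\epsilon}(G_0\psi_0^j(x_2))\bigr\rangle_{S_-}.
\]
This simultaneously establishes the asserted $z$-independence and reduces the problem to identifying the right-hand side with $4\pi^2 G_0(x_1,x_2)|x_1-x_2|^2$.

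Next, I would interpret the $j$-sum operator-theoretically. Since $\{\psi_0^j\}$ is an orthonormal basis of $H^2_D = \mathrm{range}(P_0)$, the sum $\sum_j (G_0\psi_0^j)(x_1) \otimes \overline{(G_0\psi_0^j)(x_2)}$ is exactly the integral kernel of the self-adjoint operator $G_0 P_0 G_0$ acting on $L^2(B, E\otimes S_-)$, and after spin-contracting via $\hat{\epsilon}$ one obtains a partial trace valued in $\Hom(E_{x_2}, E_{x_1})$. Substituting the projection identity $P_0 = 1 - D^+_A G_0 D^-_A$ from Section~\ref{ThelocalNahmtransform}, I rewrite
\[
G_0 P_0 G_0 = G_0^2 - G_0 D^+_A G_0 D^-_A G_0,
\]
so the task reduces to identifying the integral kernel of this combination with the multiplication $\tfrac14|x_1-x_2|^2$ composed with the $E$-Green's kernel $G_0(x_1,x_2)$. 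The algebraic core is a manipulation based on the commutator $[D^\pm_A, x_\mu] = c_\mu$, the Euclidean identity $\Delta_x|x-y|^2 = 8$, and the Clifford trace $\sum_\mu c_\mu c_\mu = -4$ on $S_\pm$; combining these relations allows one to move the Dirac factors past the Green's operators and assemble the multiplier $|x_1-x_2|^2$, with the prefactors $16\pi^2$ and $\tfrac14$ combining to the claimed $4\pi^2$.

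The main obstacle will be the bookkeeping of boundary contributions. Whenever a Dirac operator is commuted past $G_0$, Green's second identity generates a surface integral over $\partial B$, since $G_0\psi_0^j$ satisfies Dirichlet conditions but $\psi_0^j$ itself does not — in contrast with the torus case of \cite{BraamandBaal}, where no boundary is present. I expect these boundary terms to cancel pairwise thanks to the Clifford antisymmetry $c_\mu c_\nu + c_\nu c_\mu = -2\delta_{\mu\nu}$, but verifying this cancellation rigorously — while simultaneously keeping track of the $\hat{\epsilon}$-twist that turns $E$-valued expressions into $E^*$-valued ones — is the technical heart of the computation.
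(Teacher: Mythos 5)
Your first two steps — reducing to $z=0$ via gauge covariance and the antilinearity of $\hat{\epsilon}$, and identifying $\sum_j |G_0\psi_0^j(x_1)\rangle\langle G_0\psi_0^j(x_2)|$ as the Schwartz kernel of $G_0 P_0 G_0$ followed by a spin trace — are correct and agree with the paper. From that point on, however, the two arguments diverge, and your version has a genuine gap.

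The paper does not attempt to move Dirac factors past Green operators by integration by parts. After expanding $P_z = 1 - D^+_{A_z}G_z D^-_{A_z}$ and taking the spin trace, it invokes the operator identity $G_z\nabla^\mu_{A_z}G_z = \tfrac{1}{4\pi i}\,\partial_{z_\mu}G_z$. This identity is proved by differentiating the defining equation $\Lap_{A_z}G_z = 1$ in $z$; since the Dirichlet boundary condition built into $G_z$ is $z$-independent, differentiating it produces no surface terms at all. Repeated use of this identity converts $G_z\nabla^\mu G_z\nabla^\mu G_z$ into $-\tfrac{1}{(4\pi)^2}\partial^2_{z_\mu}G_z - G_z\nabla^\mu G_z\nabla^\mu G_z - 2G_z^2$, whence the whole traced kernel collapses to $-\tfrac{1}{(4\pi)^2}\partial^2_{z_\mu}G_z$, and the explicit relation $G_z(x,y) = e^{-2\pi i z(x-y)}G_0(x,y)$ yields the claimed formula directly. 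Notice that the paper therefore keeps $z$ as a live parameter and deliberately does not pass to $z=0$ early, because the $z$-dependence is what makes the trick work.

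By contrast, you propose to commute $D^\pm_A$ past $G_0$ using $[D^\pm_A,x_\mu]=c_\mu$ and $\Delta_x|x-y|^2=8$, and you correctly anticipate that each such commutation through the Green operator will generate a boundary integral over $\partial B$, since $G_0\psi_0^j$ is Dirichlet but $\psi_0^j$ is not. You then say you "expect these boundary terms to cancel pairwise ... but verifying this cancellation rigorously ... is the technical heart of the computation." That is precisely the missing step: you have named the obstruction without resolving it, and it is not obvious that the cancellation you hope for actually occurs. The paper's route is chosen exactly so that this obstacle never arises. If you want to salvage your approach you would have to track those boundary integrals explicitly — or, more simply, replace the direct commutation by the $z$-derivative identity $G_z\nabla^\mu_{A_z}G_z=\tfrac{1}{4\pi i}\partial_{z_\mu}G_z$, which is the clean substitute the paper uses.
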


\begin{proof}
Since $\langle \hat{\epsilon}(v), \hat{\epsilon}(w) \rangle=\langle w,v \rangle$, we calculate
\begin{equation}
\begin{split}
& \langle v_{x_1}(z), v_{x_2}(z) \rangle_{\hat{E}_z \otimes S_- }\\
=& (4\pi)^2 \exp(2\pi iz(x_1-x_2)) \langle \sum_j \hat{\epsilon} G_z\psi_z^j(x_1) \otimes f^j(z), \sum_k \hat{\epsilon} G_z\psi_z^k(x_2) \otimes f^k(z) \rangle_{\hat{E}_z \otimes S_-} \\
=& (4\pi)^2 \exp(2\pi iz(x_1-x_2))\text{Tr}_{S_-}\sum_j   |G_z\psi_z^j(x_1)  \rangle \langle  G_z\psi_z^j(x_2) |  .
\end{split}
\end{equation}
Here the bra-ket notation indicates $\langle  G_z\psi_z^j(x_2) |\in E_{x_2}^*$ and $|G_z\psi_z^j(x_1)  \rangle
\in E_{x_1}$.

The expression 
$
\sum_j   |G_z\psi_z^j(x_1)  \rangle \langle  G_z\psi_z^j(x_2) |
$
can be viewed as the Schwartz kernel of an operator acting on $\Gamma(B, E\otimes S_-)$, which converts a section $h$ with independent variable $x_2$ to a section with independent variable $x_1$. This can be factorised into several steps. The first step sends $h$ to 
\[
\sum_{k} \hat{f}^k(z) \int_{B} \langle  G_z \psi^k_z(x_2), h(x_2) \rangle dx_2
=P_zG_z (h).
\]
Here the equality uses the self-adjointness of $G_z$, and the decomposition formula for the projection  $P_z=\sum \hat{f^k}\langle \hat{f^k},\cdot \rangle$. The second step contracts the $\hat{E}_{z}$ factor with $\hat{f}^j$ to get a number, multiplies the result by $G_z\psi^j_z(x_1)$, and sums over $j$. Using again the decomposition formula for the projection, the result is 
\begin{equation}
\int_B \sum_j   |G_z\psi_z^j(x_1)  \rangle \langle  G_z\psi_z^j(x_2) | h(x_2) d\text{Vol}(x_2)=
ev_{x_1}  \circ G_z P_z  G_z  (h).
\end{equation}

Now we simplify the formula and take the trace over the spinor factor $S_-$. The projection can be written as $P_z=1-D^+_{A_z} G_z D^-_{A_z}$. Hence using that the Clifford multiplication $c_\mu c_\nu$  can contribute to the spinor trace if and only if $\mu=\nu$, we see that (using summation convention)
\[
\text{Tr}_{S_-} (G_z P_z G_z)=\text{Tr}_{S_-} (G_z^2-G_z c_\mu \nabla^\mu_{A_z} G_z c_\nu \nabla^\nu_{A_z} G_z)=\text{Tr}_{S_-} (G_z^2+G_z \nabla^\mu_{A_z} G_z \nabla^\mu_{A_z} G_z).
\]
Now we use the formula $G_z \nabla^\mu_{A_z} G_z=\frac{1}{4\pi i} \frac{\partial G_z}{\partial z_\mu}$ (see \cite{BraamandBaal}, Lemma 2.2) to simplify:
\[
\begin{split}
G_z \nabla^\mu_{A_z} G_z \nabla^\mu_{A_z} G_z&=\frac{1}{4\pi i} \frac{\partial G_z}{\partial z_\mu} \nabla^\mu_{A_z} G_z \\
&=\frac{1}{4\pi i}    \frac{\partial}{\partial z_\mu}  (G_z \nabla^\mu_{A_z} G_z)-\frac{1}{4\pi i} G_z \frac{\partial}{\partial z_\mu}  ( \nabla^\mu_{A_z} G_z)\\
&=\frac{-1}{(4\pi)^2}  \frac{\partial^2 G_z}{\partial z_\mu^2} -\frac{1}{4\pi i} G_z\nabla^\mu_{A_z} \frac{\partial G_z}{\partial z_\mu}  -\frac{1}{4\pi i} G_z [\frac{\partial}{\partial z_\mu},   \nabla^\mu_{A_z} ]G_z \\
&=\frac{-1}{(4\pi)^2}  \frac{\partial^2 G_z}{\partial z_\mu^2}-G_z \nabla^\mu_{A_z} G_z \nabla^\mu_{A_z} G_z-2G_z^2.
\end{split}
\]
From this we see 
\begin{equation}
\text{Tr}_{S_-} (G_z P_z G_z)=\text{Tr}_{S_-} (G_z^2+G_z \nabla^\mu_{A_z} G_z \nabla^\mu_{A_z} G_z)=\frac{-1}{(4\pi)^2}  \frac{\partial^2 G_z}{\partial z_\mu^2}.
\end{equation}
Here we need to take into account the fact that $S_-$ is 2-dimensional, so contributes twice to the trace. 

Combining the above, we get the formula for the correlator by comparing the Schwartz kernel of operators:
\[
\langle v_{x_1}(z), v_{x_2}(z) \rangle_{\hat{E}_z \otimes S_-}= - \exp(2\pi iz(x_1-x_2))\sum_{\mu} \frac{\partial^2 G_z}{\partial z_\mu^2}(x_1,x_2).
\]
Furthermore, we have $\Lap_{A_z}=e^{-2\pi i z}\Lap_{A}e^{2\pi i z},
$
so the Green's function is
$
G_z(x,y)=\exp(-2\pi i z(x-y))G_0(x,y).
$
From this we see
\begin{equation*}
\begin{split}
- \exp(2\pi iz(x-y))\sum_{\mu} \frac{\partial^2 G_z}{\partial z_\mu^2}(x,y)
=4\pi^2 G_0(x,y)|x-y|^2 
\end{split}
\end{equation*}
The claim follows.
\end{proof}

Given $\xi \in E_x$, the image under the canonical map $\alpha: E\rightarrow \hat{\hat{E }}$ is precisely the invariant section $\langle v_x, \xi \rangle \in \Gamma(\hat{\R^4}, \hat{E} \otimes S_-)$. We show that the canonical map $\alpha$ preserves the structures (\cf \cite{BraamandBaal}, Theorem 2.8 and 2.9).

\begin{prop}\label{comparisonmappreservesmetric}
The canonical map $\alpha$ preserves the Hermitian metric.	
\end{prop}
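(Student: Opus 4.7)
The plan is to reduce the isometry claim to evaluating the correlator formula \eqref{Greenfunctionembeddingdata} on the diagonal. By the definition of $\alpha$, for $\xi_1,\xi_2\in E_x$ the images $\alpha(\xi_j)=\langle v_x,\xi_j\rangle$ are invariant sections of $\hat E\otimes S_-$, and their inner product in $\hat{\hat E}_x$ (inherited from any fibre $\hat E_z\otimes S_-$) can be rewritten as
\[
\langle \alpha(\xi_1),\alpha(\xi_2)\rangle_{\hat{\hat E}_x} = \bigl\langle \xi_2,\, \langle v_x(z),v_x(z)\rangle_{\hat E_z\otimes S_-}\,\xi_1\bigr\rangle_{E_x},
\]
so it is enough to show that the endomorphism-valued diagonal correlator $\langle v_x(z),v_x(z)\rangle$ equals $\mathrm{Id}_{E_x}$. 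It is automatically independent of $z$, consistent with $z$-independence of the off-diagonal formula and with $\alpha(\xi_j)$ being invariant.

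First I would verify that $v_x(z)$ is jointly smooth in $(x,z)$ on $B\times\hat{\R}^4$: this follows from interior elliptic regularity for the Green's operator $G_z$ together with the smoothness of a local frame $\hat f^j(z)$ of $\hat E$. Consequently $(x_1,x_2)\mapsto \langle v_{x_1}(z),v_{x_2}(z)\rangle_{\hat E_z\otimes S_-}$ is continuous on all of $B\times B$, including the diagonal. Then I would invoke the short-distance asymptotic expansion of the Dirichlet Green's function of the coupled Laplacian $\nabla_A^*\nabla_A$ on $E$. A standard parametrix comparison with the scalar Euclidean fundamental solution $(4\pi^2|x_1-x_2|^2)^{-1}$, whose leading symbol coincides with that of $\nabla_A^*\nabla_A$, yields
\[
G_0(x_1,x_2) = \frac{1}{4\pi^2|x_1-x_2|^2}\,\tau_{x_2\to x_1} + O\bigl(|x_1-x_2|^{-2+\varepsilon}\bigr),
\]
where $\tau_{x_2\to x_1}:E_{x_2}\to E_{x_1}$ is parallel transport along the short Euclidean segment. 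Multiplying by $4\pi^2|x_1-x_2|^2$ kills the remainder in the limit $x_2\to x_1$, producing $\mathrm{Id}_{E_x}$. By the continuity established above, the left hand side of \eqref{Greenfunctionembeddingdata} has the same diagonal value, so $\langle v_x(z),v_x(z)\rangle=\mathrm{Id}_{E_x}$, and combined with the reduction from the first paragraph this gives $\langle\alpha(\xi_1),\alpha(\xi_2)\rangle = \langle\xi_1,\xi_2\rangle_{E_x}$.

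The main obstacle is pinning down the precise leading coefficient $1/4\pi^2$ and the identity-endomorphism structure of the singularity of $G_0$; this is the only place where the Dirichlet boundary condition and the geometry of $A$ must be controlled uniformly in $x$. Once the parametrix expansion is in hand, the diagonal limit is a routine continuity argument and the rest of the proposition follows directly from the correlator formula.
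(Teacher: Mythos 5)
Your argument is essentially the paper's: both reduce the isometry to evaluating the correlator formula (\ref{Greenfunctionembeddingdata}) on the diagonal, and both extract the identity endomorphism from the leading $\frac{1}{4\pi^2|x-y|^2}$ singularity of the coupled Dirichlet Green's function as $y\to x$. Your use of parallel transport in the parametrix expansion agrees to first order with the paper's explicit expansion $G_z(x,y)=\frac{1}{4\pi^2|x-y|^2}\{I-\sum_\mu(A_\mu(x)+2\pi i z_\mu)(x_\mu-y_\mu)+O(|x-y|^2)\}$, so the two routes coincide.
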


\begin{proof}
The Green's function has the short distance asymptotic expansion
\begin{equation}
G_z(x,y)=\frac{1}{4\pi^2|x-y|^2} \{ I-\sum_{\mu} (A_\mu(x)+2\pi i z_\mu)(x_\mu-y_\mu)   +O(|x-y|^2)     \}.
\end{equation}
From this we see
\begin{equation}\label{Greenfunctionembeddingdata1}
\begin{split}
\langle v_{x}(z), v_{y}(z) \rangle_{\hat{E} \otimes S_-}
=4\pi^2 G_0(x,y)|x-y|^2= I-\sum_{\mu} A_\mu(x)(x_\mu-y_\mu)   +O(|x-y|^2) .
\end{split}
\end{equation}

Thus the norm square of $\langle v_x, \xi \rangle$ can be calculated by first extending $\xi$ to be part of a local orthonormal frame, and then taking the limit $y\to x$. This yields $ |\langle v_x, \xi \rangle|^2=\langle \xi|I|\xi\rangle =|\xi|^2$, by looking at the lowest order in the expansion.
\end{proof}

\begin{prop}\label{comparisonmappreservesconnection}
The canonical map $\alpha$ preserves the 
connection matrix.
\end{prop}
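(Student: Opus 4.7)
The plan is to push the computation of Proposition \ref{comparisonmappreservesmetric} one Taylor order further. The short-distance asymptotic of $G_z$ underlying (\ref{Greenfunctionembeddingdata1}) actually determines $\langle v_x(z), v_y(z)\rangle$ through first order in $(x-y)$, and I claim that this first-order term is exactly the connection matrix of $A$.

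Fix $x_0\in B$ and a smooth local unitary frame $\{e_1,\dots,e_r\}$ of $E$ on a neighbourhood $U$ of $x_0$, with connection $1$-form entries $A_\mu^{ij}(x)$. By Proposition \ref{comparisonmappreservesmetric}, the sections $x\mapsto \alpha(e_j(x))$, viewed as elements of $H^2_D\otimes S_-$, form a smooth local unitary frame of $\hat{\hat{E}}$ over $U$; smoothness in $x$ follows from smooth parameter dependence of the Green's operator $G_z$ and of the frame $\psi^j_z$ of $\hat{E}$. Since $\hat{\hat{A}}$ is by construction the subbundle connection of $\hat{\hat{E}}\subset H^2_D\otimes S_-$ relative to the trivial connection, its coefficients in this frame are
\[
\hat{\hat{A}}_{\mu,ij}(x) \;=\; \left\langle \alpha(e_i(x)),\, \partial_{x_\mu}\alpha(e_j(x)) \right\rangle_{H^2_D\otimes S_-},
\]
with no need to insert $\hat{P}_x$ inside the inner product because $\alpha(e_i(x))\in \hat{\hat{E}}_x$.

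To evaluate the right-hand side, I would freeze the first slot at $x$, let the second vary with $y$, and apply the correlator formula (\ref{Greenfunctionembeddingdata}) together with the short-distance asymptotic used in the previous proposition. In the chosen trivialisation of $E$ over $U$, this yields
\[
\langle \alpha(e_i(x)),\, \alpha(e_j(y))\rangle \;=\; \delta_{ij} - A_\mu^{ij}(x)\,(x_\mu - y_\mu) + O(|x-y|^2),
\]
since the matrix of $4\pi^2 G_0(x,y)|x-y|^2$ in the frame has exactly this expansion. Differentiating in $y_\mu$ and setting $y=x$ kills the constant term and the $O(|x-y|^2)$ remainder, leaving $\hat{\hat{A}}_{\mu,ij}(x) = A_\mu^{ij}(x)$, the connection matrix of $A$ in the frame $\{e_j\}$. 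Since the local frame was arbitrary, this identifies $\alpha^*\hat{\hat{A}} = A$ on the nose.

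There is no serious obstacle here: the statement is essentially an immediate corollary of the short-distance asymptotic of $G_z$ already invoked for metric preservation. The only routine technical checks are (i) that this asymptotic is accurate through first order in $|x-y|$, which is standard parametrix construction for the coupled Laplacian, and (ii) that $x\mapsto \alpha(e_j(x))$ is $C^1$ as an $H^2_D\otimes S_-$-valued map, which follows from smooth parameter dependence of the Green's operator and of the frame $\psi^j_z$.
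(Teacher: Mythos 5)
Your proof is correct and follows essentially the same route as the paper's: both compute the subbundle connection coefficient $\langle v_x^{\xi'},\,\tfrac{d}{dy}\big|_x v_y^{\xi}\rangle$ by differentiating the correlator formula (\ref{Greenfunctionembeddingdata1}), using the short-distance asymptotic of the Green's function to extract the first-order term $-A_\mu(x)(x_\mu-y_\mu)$. The extra remarks you include (that the projection $\hat{P}_x$ drops out since $\alpha(e_i(x))\in\hat{\hat{E}}_x$, and that $x\mapsto\alpha(e_j(x))$ is $C^1$) are sound housekeeping the paper leaves implicit.
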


\begin{proof}
Let $\xi$ and $\xi'$ be two local sections of $E$, fitting into a local orthonormal frame of $E$. These map to $v_x^\xi=\langle v_x, \xi \rangle$ and $v_x^{\xi'}=\langle v_x, \xi' \rangle$.
The connection matrix on $\hat{\hat{E }}$ at the point $x \in B$ is specified by knowing $\langle v_{x}^{\xi'}, \frac{d}{dy}|_x v_{y}^{\xi} \rangle (x)$.
But this can be calculated from the expansion of the Green's function above, which implies the required
\[
\langle v_{x}^{\xi'}, \frac{d}{dy}|_x v_{y}^{\xi} \rangle (x)=\sum_\mu \langle {\xi'}, A_\mu(x)  {\xi} \rangle dx_\mu=\langle {\xi'}, \nabla_{A}  {\xi} \rangle.
\]
\end{proof}

\subsection{The reconstruction theorem}

We derive the main reconstruction theorem \ref{reconstructiontheorem} assuming the following Lemma on Fredholm index, which we shall prove later in Section \ref{Therankofdoubletransformandindexcomputation}.

\begin{lem}\label{indexproblem}
Assume $\bar{B}$ is diffeomorphic to the unit 4-ball with boundary.
The index of the Dirac operator $\hat{D}^-_{\hat{A}_y}$ acting on the space of invariant sections equals $\text{rank}(E)$ for $y\in B$.
\end{lem}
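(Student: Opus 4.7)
The first observation is that Corollary \ref{cokernelvanishingonNahmtransform} gives vanishing cokernel for $y\in B$, so the Fredholm index equals $\dim \hat{\hat{E}}_y$; and Proposition \ref{comparisonmappreservesmetric} gives the a priori lower bound $\dim \hat{\hat{E}}_y \geq \mathrm{rank}(E)$ via the isometric embedding $\alpha:E_y\hookrightarrow \hat{\hat{E}}_y$. The task is therefore to establish the reverse inequality, which will in particular force finite-dimensionality of $\hat{\hat{E}}_y$.

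My plan is to show that the index is invariant under continuous deformations of $y\in B$ and of the ASD connection $A$, and then to compute it in a trivial model. Local constancy in $y$ is immediate since $\hat{D}^-_{\hat{A}_y}=-2\pi i\sum(\hat{x}_\mu-y_\mu)\hat{c}_\mu$ is polynomial in $y$ as a family of bounded Fredholm operators on the $y$-independent Hilbert spaces $H^2_D\otimes S_\pm$. For deformations of $A$, I would exhibit the Bergmann spaces $H^2_{D_{A_t}}$ as a continuous Hilbert subbundle of $L^2(B,E\otimes S_-)$ along a smooth path $A_t$ of ASD connections; this reduces to norm-continuity of the Dirichlet Green's operators $G_0^{A_t}$ in $A_t$, a standard consequence of elliptic regularity. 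After trivialising this subbundle via the orthogonal projections $P_0^{A_t}$, one obtains a norm-continuous family of Fredholm operators on a common ambient Hilbert space, whose index is constant in $t$.

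With deformation invariance in hand, the proof reduces to a model computation. Since $\bar B$ is diffeomorphic to the 4-ball, $E$ is trivializable of rank $r$; and since the statement is local around $y$, I can shrink $B$ so that $A$ has arbitrarily small $L^2$-curvature on $B$. A small ASD connection on the trivial bundle over a small ball can then be continuously deformed through ASD connections to the trivial flat connection, for example via an Uhlenbeck-type gauge fixing followed by a rescaling homotopy. In the trivial flat model the coupled Dirac operator decouples into $r$ independent copies of the Dirac operator on the trivial line bundle, reducing to the example of Section 2.2, where the canonical map produces the explicit Dirac field displayed there; I would verify that this field spans the one-dimensional kernel $\hat{\hat{E}}_y$, giving index $1$ per copy and hence total index $r=\mathrm{rank}(E)$.

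The principal obstacle is the rigorous setup of deformation invariance, and in particular constructing a path of ASD connections joining the given $A$ to the trivial flat model; because the ASD equations are nonlinear, this likely requires Uhlenbeck gauge fixing combined with a careful smallness estimate. A secondary technical point is uniform norm-continuity of the Bergmann-space projections under the deformation, which in turn depends on uniform elliptic estimates for $\nabla_{A_t}^*\nabla_{A_t}$. If such a direct homotopy proves unavailable, a fallback would be to identify the index K-theoretically, by computing the jump of the family index $y\mapsto \mathrm{ind}(\hat{D}^-_{\hat{A}_y})$ across $\partial B$ as a topological invariant of $E$.
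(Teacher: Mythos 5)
Your outline — reduce the index computation to a flat model by deformation invariance, then compute there — is a legitimate alternative to what the paper does, but it contains two genuine gaps, and the paper's route avoids both. The first gap is the one you flag as the ``principal obstacle'': joining $A$ to the flat connection by a path of ASD connections. This constraint is spurious. The Fredholmness of $T=\sum\hat f_i\hat c_i$ on $H^2_D\otimes S_\pm$ and the index formula hold for arbitrary smooth connections, not only ASD ones; the paper says so explicitly. Consequently a linear path $tA$ (or any path at all) would do, since one only needs index invariance, not vanishing of the cokernel along the way. Better still, the paper dispenses with paths entirely: it introduces a flat model connection $\bar A$ on the \emph{same} bundle, shows the comparison maps $\mathcal P:H^2_D\to H^2_{\bar D}$ and $\mathcal P^\dagger$ are compact perturbations of the identity in $L^2$ (so have opposite, cancelling indices), and observes that $\mathcal P T\mathcal P^\dagger$ differs from the flat-model Toeplitz–Clifford operator $\bar T$ by a compact operator. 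This gives $\mathrm{ind}(T)=\mathrm{ind}(\bar T)$ directly, with no homotopy of connections.

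The second gap is the model computation itself, which you postpone to ``verify that this field spans the one-dimensional kernel''. This is where the real content sits. Proposition \ref{comparisonmappreservesmetric} already gives the lower bound $\dim\hat{\hat E}_y\ge\mathrm{rank}(E)$ via the isometric embedding $\alpha$, so exhibiting one nonzero kernel element in the rank-$1$ flat case only recovers what you already know; the substance is the upper bound $\dim\ker T\le 1$. The paper proves this by deriving $\Delta\tau=\sum c_\mu\hat c_\mu s$ from the kernel equation, invoking $\mathrm{Spin}(4)$ representation theory (the map $S_-\otimes S_-\to S_+\otimes S_+$ given by $\sum c_\mu\hat c_\mu$ factors through the one-dimensional piece $\Lambda^2 S_-$ by Schur's lemma) to collapse $\tau$ to a single scalar $\rho$ times a fixed spinor, and then showing that $\rho/|x|^2$ is harmonic with zero boundary data and at worst a second-order pole at the origin — hence proportional to the Dirichlet Green's function. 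Without an argument of that kind, or some other mechanism to pin down the kernel dimension rather than just its nonvanishing, the index computation in the flat model is incomplete.
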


\begin{thm}\label{Thereconstructiontheorem}(\textbf{Reconstruction})
Let $B$ be a bounded domain in $\R^4$, such that $\bar{B}$ is diffeomorphic to the unit 4-ball with boundary, and $A$ is a smooth ASD connection on $\bar{B}$.
The canonical map $\alpha: E\to \hat{\hat{E}}$ is an isomorphism over $B$, preserving the Hermitian metric and the connection, and in the exterior of $B$ the inverse Nahm transform $\hat{\hat{E}}$ vanishes.
\end{thm}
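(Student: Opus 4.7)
The plan is to bootstrap from the structural results already in hand. The vanishing assertion for $y \notin \bar{B}$ is Corollary \ref{doubletransformvanishesintheexterior}, while preservation of the Hermitian metric and of the connection under $\alpha$ are Propositions \ref{comparisonmappreservesmetric} and \ref{comparisonmappreservesconnection}. The only substantive remaining task is to show that $\alpha: E \to \hat{\hat{E}}$ is an isomorphism over $B$.

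First I would observe that, being a fibrewise isometric embedding of Hermitian bundles, $\alpha$ is automatically injective at every point. The theorem therefore reduces to the pointwise rank identity $\dim \hat{\hat{E}}_y = \text{rank}(E)$ for every $y \in B$.

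Next I would compute $\dim \hat{\hat{E}}_y$ as a Fredholm index. By definition, $\hat{\hat{E}}_y$ is the kernel of $\hat{D}^-_{\hat{A}_y}$ restricted to the invariant sections of $\hat{E} \otimes S_-$. The ASD condition on $\hat{A}$ yields the Weitzenb\"ock identity
\[
\hat{\nabla}^*_{\hat{A}_y}\hat{\nabla}_{\hat{A}_y} = \hat{D}^-_{\hat{A}_y}\hat{D}^+_{\hat{A}_y}
\]
on $\hat{E}\otimes S_+$; combined with the invertibility of $\hat{\Lap}_{\hat{A}_y}$ on invariant sections established in the previous subsection, this forces $\ker \hat{D}^+_{\hat{A}_y} = 0$, so that the cokernel of $\hat{D}^-_{\hat{A}_y}$ on invariant sections vanishes. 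Consequently $\dim \hat{\hat{E}}_y$ equals the Fredholm index of $\hat{D}^-_{\hat{A}_y}$ restricted to invariant sections, which Lemma \ref{indexproblem} identifies with $\text{rank}(E)$.

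The hard part is clearly Lemma \ref{indexproblem} itself, which is where the topological hypothesis that $\bar{B}$ is diffeomorphic to the $4$-ball enters in an essential way, and whose proof is deferred to Section \ref{Therankofdoubletransformandindexcomputation}. Granted that input, the remainder is a clean assembly: fibrewise injectivity from the isometric property, fibrewise surjectivity from the equality of finite ranks (together with the fact that $\hat{\hat{E}}$ is nonsingular on $B$ by Corollary \ref{doubletransformisASD}), structure preservation from the two comparison-map propositions, and exterior vanishing from the corollary.
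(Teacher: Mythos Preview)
Your proposal is correct and follows essentially the same route as the paper's proof: exterior vanishing from Corollary \ref{doubletransformvanishesintheexterior}, surjectivity of $\hat{D}^-_{\hat{A}_y}$ so that $\dim\hat{\hat{E}}_y$ equals the index, the index computation from Lemma \ref{indexproblem}, injectivity of $\alpha$ from the metric-preserving Proposition \ref{comparisonmappreservesmetric}, and the connection comparison from Proposition \ref{comparisonmappreservesconnection}. The only cosmetic difference is that where you rederive the cokernel vanishing via the Weitzenb\"ock identity and the invertibility of $\hat{\Lap}_{\hat{A}_y}$, the paper simply cites Corollary \ref{cokernelvanishingonNahmtransform}, whose content is precisely that argument.
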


\begin{proof}
By Corollary \ref{doubletransformvanishesintheexterior}, the inverse Nahm transform vanishes in the exterior region. By Corollary \ref{cokernelvanishingonNahmtransform}, the operator $\hat{D}^-_{\hat{A}_y}$ is surjective, so its kernel dimension is equal to the index. For $y\in B$, by the Lemma above the index is $\text{rank }(E)$, so $\text{rank }(\hat{\hat{E}})= \text{rank }(E)$. By Proposition \ref{comparisonmappreservesmetric}, the canonical map $\alpha$ is an injective isometry, so must be an isomorphism. The connection $A$ agrees with $\hat{\hat{A}}$ by Proposition \ref{comparisonmappreservesconnection}.
\end{proof}

We now discuss the result from a number of perspectives.

\begin{rmk}
	The reconstruction theorem is anologous to the Cauchy integration formula in complex analysis, which says the contour integral
	\[
	\frac{1}{2\pi i}\oint \frac{f(\zeta)}{ \zeta-z  }d\zeta
	\]
	vanishes for $z$ in the exterior region of the contour, and reproduces the holomorphic function $f$ in the interior domain.
\end{rmk}

\begin{rmk}
Viewed in another way, the reconstrction theorem means there is a canonical embedding of $E$ inside the trivial Hilbert bundle $H^2_D\otimes S_-$. Then (\ref{Greenfunctionembeddingdata}) has the interesting interpretation:
\begin{cor}
The Green's function $G_0(x,y)$ on the original bundle is related to the embedding data by
\begin{equation}
G_0(x,y)=\frac{1}{4\pi^2|x-y|^2} \langle v_x(0), v_y(0) \rangle_{H^2_D \otimes S_-} \in \Hom(E_y, E_x).
\end{equation}
\end{cor}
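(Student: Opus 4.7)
My plan is to derive the identity directly from the correlator formula
\[
\langle v_{x}(z), v_{y}(z) \rangle_{\hat{E}_z \otimes S_-} = 4\pi^2 G_0(x,y) |x-y|^2
\]
established as (\ref{Greenfunctionembeddingdata}) in the preceding proposition. First, I would specialise to $z=0$ and invoke the canonical trivialisation $\hat{E}_0 \simeq H^2_D$, so that the ambient Hilbert bundle of the inverse construction becomes $H^2_D \otimes S_-$, and $v_x(0)$ naturally lies in $E_x^* \otimes S_- \otimes H^2_D$. For $x \neq y$ (the only regime in which the Green's kernel is a pointwise smooth object), the scalar $4\pi^2|x-y|^2$ is nonzero, and dividing both sides by it yields the asserted identity.

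The $\Hom(E_y, E_x)$ interpretation is forced by the tensor structure rather than requiring separate argument: contracting the $H^2_D \otimes S_-$ factors of $v_x(0)$ and $v_y(0)$ under the Hermitian inner product produces an element of $E_x \otimes E_y^* \simeq \Hom(E_y, E_x)$, which matches the type of the Schwartz kernel of $G_0$ regarded as the inverse Laplacian on sections of $E$.

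The conceptual point I would stress is that in light of the reconstruction Theorem \ref{Thereconstructiontheorem}, the assignment $x \mapsto v_x(0)$ (up to the antilinear twist $\hat{\epsilon}$) encodes precisely the isometric embedding $\alpha \colon E \hookrightarrow H^2_D \otimes S_-$. Thus the corollary reinterprets (\ref{Greenfunctionembeddingdata}) as saying that, once one strips off the universal Euclidean singularity $1/(4\pi^2|x-y|^2)$, what remains of the coupled Green's function is exactly the smooth inner product of the embedding data. There is no genuine analytic obstacle to address, because all the difficult work -- the Green's operator identities, the spinor trace computation, and the short-distance expansion -- was already carried out in the proof of the correlator proposition; the corollary is a rearrangement together with a change of viewpoint.
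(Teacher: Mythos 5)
Your proposal is correct and matches the paper's intent exactly: the corollary is stated immediately after the correlator proposition precisely as a rearrangement of (\ref{Greenfunctionembeddingdata}) at $z=0$ under the canonical trivialisation $\hat{E}_0\simeq H^2_D$, and the paper offers no further proof because none is needed.
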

Similar results are known in the context of the usual ADHM construction (\cf equation (29) in \cite{Christ}).
\end{rmk}

\begin{rmk}
Theorem \ref{reconstructiontheorem} links ASD connections to operator theory via the ADHM interpretation. Donaldson proved in \cite{DonaldsonRungeapproximation} a Runge approximation type theorem, which says that ASD connections on domains can be $C^\infty$ approximated on compact subsets up to gauge, by restrictions of global ASD connections over $S^4$ for arbitrarily large second Chern class. This suggests that on the operator theory side, one may approximate $\hat{x}_\mu$ in some sense by finite rank Hermitian operators satisfying the ADHM equation. It is interesting to ask how this may be proved using purely operator theoretic techniques.
\end{rmk}

\begin{rmk}
The ADHM data $(H^2_D, \hat{x}_\mu)$ resembles the key concept of `spectral triple' in noncommutative geometry (NG) \cite{AlainConnes}, and the ADHM type equation (\ref{ADHMequation}) fits into the basic philosophy of NG, namely to encode geometry by operator algebras.
\end{rmk}

\section{The spectral problem}\label{ThespectralproblemChapter}

For a smooth ASD connection $A$ on $B=B(R)\subset \R^4$, we wish to understand the Bergmann space $H^2_D =\{ s\in L^2(B, E\otimes S_- ): D_A^-  s=0           \}      $, whose elements are called Dirac fields, by studying the spectrum of a natural operator on $H^2_D$, which physically is just the Hamiltonian of a harmonic oscillator (\cf Section \ref{Thespectralproblem}).

We are interested in the behaviour of $H^2_{D_A}$ as $A$ develops a curvature singularity at the origin. This means $A$ is a member of a sequence $A_i$ (or a 1-parameter family $A_t$), which is uniformly bounded to all orders on the complement of any given neighbourhood of the origin. We derive uniform estimates to control the eigenstates associated to the spectral problem. These are based on the Weitzenb\"ock formula. The picture emerging from the analysis is the following:

The spectrum is divided into 3 characteristic ranges: $\lambda<<1$, the intermediate range, and $\lambda \sim 1$. 
\begin{itemize}
	\item If the eigenvalue is bounded below by a positive constant, then the solution has good interior Morrey type bound. (\cf (\ref{interiorcontrolforlargeeigenvalue}))
	\item If the eigenvalue is bounded above away from 1, then the solution has good bounds away from the origin (\cf Proposition \ref{smoothcontrolawayfromorigin} and its ensuing Remark).
	\item If the eigenvalue is small, then the density of the eigenstate is concentrated near the origin (\cf (\ref{concentrationnearorigin})).
	\item If the eigenvalue is close to 1, then the density of the eigenstate is concentrated near the boundary (\cf (\ref{concentrationnearboundary})).
\end{itemize}

\subsection{The spectral problem}\label{Thespectralproblem}

Let $A$ be an ASD connection on $B=B(R)$, which is smooth up to the boundary. We introduce a functional on the Bergmann space $H^2_D$,
\begin{equation}
H(s)=\frac{1}{2R^2} \int_{B(R)} |x|^2 |s|^2 d\text{Vol}.
\end{equation}
Physically this is the Hamiltonian of a harmonic oscillator.

Now $H$ defines a Hermitian form, which in the presence of the $L^2$ inner product defines a bounded self-adjoint operator $L$ acting on $H^2_D$. 
We observe

\begin{lem}
	The operator $L=1/2+K$ where $K$ is a compact operator.
\end{lem}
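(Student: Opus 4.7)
The plan is to identify $L$ explicitly as a projection of a multiplication operator, and then exhibit $L - 1/2$ as a compact operator by separating its action into an interior piece (compact by elliptic regularity) and a boundary piece of small operator norm.

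Concretely, the quadratic form $H$ is induced by multiplication by $|x|^2/(2R^2)$ on $L^2(B(R), E\otimes S_-)$, so $L = P_0 \circ M_{|x|^2/(2R^2)}$ where $M_g$ denotes multiplication by $g$ and $P_0$ is the $L^2$-orthogonal projection onto $H^2_D$. Subtracting $1/2$ gives
\[
L - \tfrac{1}{2} \;=\; P_0 \circ M_f, \qquad f(x) \;=\; \frac{|x|^2 - R^2}{2R^2}.
\]
The crucial feature is that $f$ is a bounded function on $\bar{B}$ that \emph{vanishes on the boundary} $\partial B(R)$. Since $P_0$ is bounded, it suffices to prove that $M_f : H^2_D \to L^2(B, E\otimes S_-)$ is a compact operator.

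To prove this compactness, I would take a bounded sequence $\{s_n\} \subset H^2_D$ and produce a subsequence along which $\{f s_n\}$ converges in $L^2(B)$. Each $s_n$ satisfies $D_A^- s_n = 0$, which is an elliptic equation with smooth coefficients, so by interior elliptic regularity there is a uniform bound
\[
\norm{s_n}_{W^{2,1}(K)} \leq C_K \norm{s_n}_{L^2(B)}
\]
for every compact subdomain $K \Subset B$. The Rellich--Kondrachov theorem combined with a diagonal extraction over an exhausting sequence of compacts $K_j \uparrow B$ yields a subsequence (still denoted $s_n$) converging strongly in $L^2_{\text{loc}}(B)$.

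To upgrade this to convergence in $L^2(B)$ after multiplication by $f$, fix $\varepsilon > 0$ and choose a compact subdomain $K_\varepsilon \Subset B$ on whose complement $|f| < \varepsilon$. Splitting the integral,
\[
\norm{f(s_n - s_m)}_{L^2(B)}^2 \;\leq\; \sup_{B} |f|^2 \norm{s_n - s_m}_{L^2(K_\varepsilon)}^2 + \varepsilon^2 \norm{s_n - s_m}_{L^2(B)}^2.
\]
The first term goes to $0$ as $n,m \to \infty$ by strong interior convergence, while the second is bounded by $4\varepsilon^2 \sup_n \norm{s_n}_{L^2(B)}^2$. Since $\varepsilon$ is arbitrary, $\{f s_n\}$ is Cauchy in $L^2(B)$, hence convergent, which proves $M_f$ is compact and therefore $K = L - 1/2$ is compact.

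The argument is fairly soft; the only nontrivial input is interior elliptic regularity for the coupled Dirac operator, which is standard for smooth ASD connections. The main subtlety to watch is that $f$ really does decay to zero uniformly near $\partial B(R)$ (which it does, with modulus of continuity determined by the distance to the boundary), since if one tried to run the same argument for a weight that did not vanish on $\partial B$ there would be no hope of compactness. This vanishing of $f$ at $\partial B$ is precisely the reason $1/2$ is the essential value of the spectrum of $L$.
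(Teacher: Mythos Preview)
Your proof is correct and follows essentially the same approach as the paper: both rely on interior elliptic regularity for Dirac fields together with the vanishing of the weight $f(x)=(|x|^2-R^2)/(2R^2)$ on $\partial B$. The paper packages this as writing $L-\tfrac12$ as an operator-norm limit of the truncated operators $L_i$ associated to $H_i(s)=\tfrac12\int_{B((1-1/i)R)}(|x|^2R^{-2}-1)|s|^2$ (each compact by interior regularity), whereas you carry out the equivalent Cauchy-sequence extraction directly; the underlying idea is identical.
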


\begin{proof}
	We can write $L-1/2$ as the operator norm limit of a sequence of linear operators $L_i$ corresponding to the Hermitian forms 
	\[
	H_i(s)=\frac{1}{2}\int_{B((1-1/i)R)} (|x|^2R^{-2}-1)|s|^2
	\]
	But each $L_i$ is a compact operator becasuse the $L^2$ norm of a Dirac field controlls all interior higher order derivatives. Compact operators are closed under norm limits, so $L-1/2$ must be compact as well.
\end{proof}

Standard functional analysis implies that $L$ has \textbf{discrete spectrum}; this amounts to the simultaneous diagonalisation of the the Hermitian form $H$ and the $L^2$ inner product, and is computable by the Rayleigh-Ritz method.

\begin{rmk}
	We have by definition
	\begin{equation} 
	\int_{B} \frac{|x|^2}{R^2} |s|^2= \lambda^2 \int_{B}  |s|^2
	\end{equation}
	so $0<\lambda<1$. We make an elementary observation that when $\lambda<<1$, the eigenstate $s$ is concentrated near the origin: for any fixed $0<r<R$,
	\begin{equation}\label{concentrationnearorigin}
	\int_{B\setminus B(r)} |s|^2 \leq \frac{\lambda^2 R^2}{r^2} \int_B |s|^2.
	\end{equation}
	Heuristically, the small eigenvalue eigenstates describe particles trapped in the potential well, analogous to  bound states in physics. If $A$ has very concentrated curvature, we expect the characteristic length scale $\lambda$ of the eigenstates corresponding to the small eigenvalues to be roughly the same as the length scale of the curvature of $A$.
	
	On the opposite extreme, if the eigenvalue is close to 1, then the eigenstate $s$ is concentrated near the boundary: for any fixed $0<r<R$,
	\begin{equation}\label{concentrationnearboundary}
	\int_{B(r)} |s|^2 \leq \frac{(1-\lambda^2)R^2}{R^2-r^2} \int_B |s|^2 
	\end{equation}
\end{rmk}

We now characterise the eigenstates $s$ of $L$:

\begin{prop} (Characterisation of eigenstates)
	There is a unique section $\zeta$ in  $W^{2,1}_0(B, E\otimes S_+)$, such that
	\begin{equation}\label{characterisationofeigenstates}
	\begin{cases}
	D^-_A s=0  \\
	D^+_A\zeta=(\frac{|x|^2}{2R^2} -\frac{\lambda^2}{2})  s
	\end{cases}
	\end{equation}
\end{prop}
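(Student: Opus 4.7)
The plan is to recognize the proposition as a direct translation of the eigenvalue condition for $L$ into the language of the $L^2$ orthogonal decomposition already set up in Chapter \ref{ThelocalNahmtransform}. The operator $L$ was defined so that $\langle L s, t\rangle_{L^2} = H(s,t)$ for $s,t \in H^2_D$; hence the condition that $s\in H^2_D$ be an eigenstate of $L$ with eigenvalue $\lambda^2/2$ amounts to
\[
\left\langle \bigl(\tfrac{|x|^2}{2R^2} - \tfrac{\lambda^2}{2}\bigr) s, \; t \right\rangle_{L^2} = 0 \quad \text{for every } t \in H^2_D.
\]
Equivalently, the section $(\tfrac{|x|^2}{2R^2} - \tfrac{\lambda^2}{2})s \in L^2(B, E\otimes S_-)$ is $L^2$-orthogonal to $H^2_D = \ker D^-_A$.

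Next I would invoke the orthogonal splitting
\[
L^2(B, E\otimes S_-) = H^2_D \;\oplus\; D^+_A\, W^{2,1}_0(B, E\otimes S_+),
\]
which is exactly the $z=0$ case of the splitting used to define the projection $P_0$ in Section \ref{ThelocalNahmtransform}. Since the orthogonal complement of $H^2_D$ coincides with the image $D^+_A W^{2,1}_0$, there must exist some $\zeta \in W^{2,1}_0(B, E\otimes S_+)$ with $D^+_A \zeta = (\tfrac{|x|^2}{2R^2} - \tfrac{\lambda^2}{2}) s$. The zero boundary condition is automatic from membership in $W^{2,1}_0$.

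For uniqueness, any difference $\zeta_1 - \zeta_2$ of two solutions belongs to $W^{2,1}_0$ and lies in $\ker D^+_A$. The Weitzenb\"ock identity $D^-_A D^+_A = \nabla^*_A \nabla_A$ on $E \otimes S_+$, which holds because $A$ is ASD, combined with integration by parts using the vanishing boundary trace, yields $\|\nabla_A(\zeta_1-\zeta_2)\|_{L^2}^2 = 0$ and hence $\zeta_1 = \zeta_2$; this is the same argument invoked earlier in the excerpt to assert that $\ker D^+_{A_z}$ vanishes. I do not anticipate any substantive obstacle: the whole content is to match the variational formulation of the spectral problem with the Hodge-type decomposition already available.
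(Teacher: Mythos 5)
Your proposal is correct and follows essentially the same route as the paper: eigenstate condition translated to $L^2$-orthogonality against $H^2_D$, then existence of $\zeta$ from the splitting $L^2 = H^2_D \oplus D^+_A W^{2,1}_0$, and uniqueness from the Weitzenb\"ock vanishing of $\ker D^+_A$ on $W^{2,1}_0$. You have merely spelled out steps the paper compresses into a sentence.
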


\begin{proof}
	If $s\in H^2_D$ is an eigenstate of $L$ with eigenvalue $\frac{\lambda^2}{2}$, then $(\frac{|x|^2}{2R^2} -\frac{\lambda^2}{2})  s$ is orthogonal to all elements in the space $H^2_D$ in the $L^2$ sense, by linear algebra. This implies the second equation above by using the decomposition
	\[
	L^2(B, E\otimes S_-)=  D^+_A W^{2,1}_0(B, E\otimes S_+)  \oplus H^2_D.
	\]
	Here $\zeta$ is unique because by a standard Weitzenb\"ock formula, there is no coupled Dirac field with positive spin and zero boundary condition.
\end{proof}

\subsection{The Weitzenb\"ock formula}

The well known Weitzenb\"ock formula says that because $A$ is ASD,  \[
D^2=D^-_A D^+_A=\Lap_A
\]
acting on the coupled positive spinor $\zeta$; the curvature effect is not directly visible. Thus $\zeta$ 
enjoys more favourable analytic properties compared to the negative spinors, so we shall mainly focus on $\zeta$ when deriving the estimates.

Notice equation (\ref{characterisationofeigenstates}) implies 
\begin{equation*}
D^2 \zeta=\sum_i \frac{x_i c_i}{R^2} s= \frac{x\cdot s}{R^2} ,
\end{equation*}
which, by an application of Weitzenb\"ock formula, gives
\begin{equation}\label{differentialidentityWeitzenbock}
-\Lap |\zeta|^2=2 |\nabla \zeta|^2- 2 \text{Re} \langle \zeta, \frac{x\cdot s}{R^2}  \rangle,
\end{equation}
where our convention of the Hodge Laplacian is $\Lap=-\sum_i \nabla_i\nabla_i$. 
Integrating the identity, we obtain:
\begin{equation*}\label{gradientL2zeta1}
\int_{B}  |\nabla \zeta|^2=\text{Re} \int_{B} \langle \zeta, \frac{x\cdot s}{R^2}  \rangle
\end{equation*}
Here the boundary term does not appear because $|\zeta|^2$ vanishes to second order.

\begin{lem} ($W^{2,1}$ estimate of $\zeta$)
	\begin{equation}\label{gradientL2zeta}
	\int_{B}  |\nabla \zeta|^2\leq C\lambda^2 \int_{B} |s|^2.
	\end{equation}
	As a consequence,
	\begin{equation}\label{ConsequenceofHardy}
	\int_{B} \frac{|\zeta|^2(x)}{ |x|^2  }\leq C {\lambda^2} \int_{B} |s|^2.
	\end{equation}
	Here $C$ is an absolute constant.
\end{lem}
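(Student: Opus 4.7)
The plan is to bound the right-hand side of the already-derived identity
\[
\int_B |\nabla \zeta|^2 = \text{Re}\int_B \langle \zeta, \frac{x \cdot s}{R^2}\rangle
\]
by a Cauchy-Schwarz split that isolates the eigenvalue factor. Specifically, I would group the factors so that one copy of $|x|/R$ stays with $s$, producing exactly the Rayleigh quotient:
\[
\left|\text{Re}\int_B \langle \zeta, \frac{x\cdot s}{R^2}\rangle\right| \leq \frac{1}{R}\Bigl(\int_B |\zeta|^2\Bigr)^{1/2}\Bigl(\int_B \frac{|x|^2}{R^2}|s|^2\Bigr)^{1/2}.
\]
The second factor is identically $\lambda \norm{s}_{L^2}$ by the definition of the eigenvalue $\lambda^2/2$ recorded just before the Lemma.

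Next I would control $\norm{\zeta}_{L^2}$ by $\norm{\nabla \zeta}_{L^2}$. Since $\zeta \in W^{2,1}_0(B, E\otimes S_+)$ has zero boundary trace, Kato's inequality $|\nabla|\zeta|| \leq |\nabla_A \zeta|$ reduces the bound to the scalar Poincar\'e inequality on $B(R)$, yielding $\norm{\zeta}_{L^2} \leq C R \norm{\nabla \zeta}_{L^2}$ with an absolute constant $C$ (this is the scale-invariant Poincar\'e constant for the unit ball). Substituting, the $R$'s cancel and we obtain
\[
\int_B |\nabla \zeta|^2 \leq C\lambda \Bigl(\int_B|\nabla\zeta|^2\Bigr)^{1/2} \norm{s}_{L^2},
\]
from which the first estimate (\ref{gradientL2zeta}) follows upon rearrangement.

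For the Hardy-type consequence (\ref{ConsequenceofHardy}), I would invoke the classical Hardy inequality in $\R^4$: for any scalar $f \in W^{1,2}_0(B)$,
\[
\int_B \frac{|f|^2}{|x|^2} \leq 4 \int_B |\nabla f|^2,
\]
which is valid because $n=4 \geq 3$. Applying this to $f = |\zeta|$ and again using Kato's inequality gives $\int_B \frac{|\zeta|^2}{|x|^2} \leq C \int_B |\nabla \zeta|^2$, and combining with the first estimate yields the claim.

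I do not anticipate a real obstacle: the whole argument is a careful Cauchy-Schwarz plus two standard sharp inequalities (Poincar\'e and Hardy) that both preserve scale-invariant constants on balls. The one point worth flagging is the need to verify that the constants are independent of $R$, which is handled uniformly by rescaling to the unit ball or, equivalently, by citing the scale-invariant forms of Poincar\'e and Hardy.
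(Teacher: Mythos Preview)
Your proof is correct and follows essentially the same route as the paper: the identity $\int_B|\nabla\zeta|^2=\mathrm{Re}\int_B\langle\zeta,\frac{x\cdot s}{R^2}\rangle$, then Cauchy--Schwarz, Poincar\'e to absorb $\norm{\zeta}_{L^2}$, and finally Hardy for the second estimate. Your explicit invocation of Kato's inequality to pass from $\nabla_A\zeta$ to $\nabla|\zeta|$ is a detail the paper leaves implicit, but the argument is otherwise identical.
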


\begin{proof}
	By the Poincar\'e inequality,
	\[
	\begin{split}
	\int_{B}  |\nabla \zeta|^2& =\text{Re} \int_{B} \langle \zeta, \frac{x\cdot s}{R^2}  \rangle \leq \frac{1}{R^2}(\int_B |\zeta|^2)^{1/2}( \int_B |x|^2|s|^2  )^{1/2} \\
	& \leq CR^{-1}(\int_B |\nabla\zeta|^2)^{1/2}( \int_B |x|^2|s|^2  )^{1/2},
	\end{split}
	\]
	so (\ref{gradientL2zeta}) follows by \[
	\int_{B}  |\nabla \zeta|^2 \leq CR^{-2} \int_B |x|^2|s|^2 = C\lambda^2 \int_{B}  |s|^2.
	\] 
	The inequality (\ref{ConsequenceofHardy}) follows from Hardy's inequality, by noticing that $\zeta$ has zero boundary condition.
\end{proof}

\begin{prop}\label{smoothcontrolawayfromorigin}
	Given constants $0<r<R$ and $0<C''<1$, in the annulus region $r\leq|x|\leq R$, if we normalise $s$ to have unit $L^2$ norm, then for eigenstates with 
	eigenvalues satisfying
	$
	\lambda< C'',
	$
	we have smooth estimates on $s$ and $\zeta$ to all orders, which are uniform in $A$ in the setup of this Chapter.
\end{prop}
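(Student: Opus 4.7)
My plan is a standard elliptic bootstrap on the annulus, starting from the energy estimates already proved and exploiting the hypothesis that the connection $A$ is uniformly smooth on any region bounded away from the origin. As baseline data, the concentration inequality (\ref{concentrationnearorigin}) applied with radius $r/2$ together with $\lambda<C''$ gives $\|s\|_{L^2(B\setminus B(r/2))}\leq 2C''R/r$, while (\ref{gradientL2zeta}) together with the Poincar\'e inequality (using $\zeta|_{\partial B}=0$) gives $\|\zeta\|_{W^{2,1}(B)}\leq C$; both bounds are uniform across the family.

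I would then convert (\ref{characterisationofeigenstates}) into second-order elliptic equations via the standard Weitzenb\"ock formulas. Since $A$ is ASD, on $E\otimes S_+$ we have $D^-_A D^+_A=\nabla^*_A\nabla_A$, so from $D^+_A\zeta=fs$ with $f=\frac{|x|^2}{2R^2}-\frac{\lambda^2}{2}$ and $D^-_As=0$ we obtain $\nabla^*_A\nabla_A\zeta=(\nabla f)\cdot s$; on $E\otimes S_-$ we have $D^+_AD^-_A=\nabla^*_A\nabla_A+c\,F_A\cdot$ for a convention-dependent constant $c$, hence $\nabla^*_A\nabla_A s=-c\,F_A\cdot s$. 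The coefficients depend only on $A$ and its derivatives, which are uniformly bounded in $C^k$ on any annulus away from the origin. Standard interior Schauder / $W^{k,p}$ bootstrap, started from the $L^2$ data, then produces uniform $C^k$ bounds on $s$ and $\zeta$ over any compactly contained sub-annulus $r\leq|x|\leq R-\delta$.

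The main obstacle is to extend the bounds up to the outer boundary $|x|=R$, since $s$ carries no imposed boundary data there. The key algebraic observation is that when $\lambda<C''<1$, the scalar coefficient in the $\zeta$-equation is uniformly bounded below on a neighborhood of $\partial B$: for instance on $\frac{1+C''}{2}R\leq|x|\leq R$ the factor $\frac{|x|^2}{2R^2}-\frac{\lambda^2}{2}$ exceeds a positive constant depending only on $C''$. There we may invert to write $s=2(|x|^2/R^2-\lambda^2)^{-1}D^+_A\zeta$, so $s$ inherits the bounds of $\zeta$ with one derivative loss. A simultaneous bootstrap on the outer sub-annulus, alternating Dirichlet boundary Schauder estimates for $\zeta$ (with right-hand side $(\nabla f)\cdot s$) with the algebraic inversion for $s$, yields uniform $C^k$ bounds up to $|x|=R$. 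Combining with the interior estimate over $r\leq|x|\leq R-\delta$, chosen so that the two regions overlap, produces the claimed uniform $C^k$ bound on the full annulus $r\leq|x|\leq R$.
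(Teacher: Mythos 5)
Your proof is correct but takes a genuinely different route from the paper's. The paper eliminates $s$ to obtain a single second-order elliptic equation $D_A\bigl((|x|^2 - \lambda^2 R^2)^{-1} D_A\zeta\bigr)=0$ for $\zeta$, whose coefficient degenerates on the sphere $\{|x|=\lambda R\}$; it then handles a neighbourhood of that sphere separately by applying interior regularity directly to the first-order Dirac equation $D^-_A s = 0$ and feeding the resulting bounds on $s$ back into the equation for $\zeta$. You instead keep the system coupled and apply the Weitzenb\"ock formula twice: on $E\otimes S_-$ (giving $\nabla^*_A\nabla_A s = -c\,F_A\cdot s$, with $F_A$ uniformly bounded on the annulus by the hypotheses of the Chapter) and on $E\otimes S_+$ (giving $\nabla^*_A\nabla_A\zeta = (\nabla f)\cdot s$). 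This makes the interior bootstrap entirely free of degeneracy, so no special treatment of $\{|x|=\lambda R\}$ is needed. Both arguments treat the outer boundary the same way: since $\lambda < C''<1$, the coefficient $f = \tfrac{|x|^2}{2R^2}-\tfrac{\lambda^2}{2}$ is bounded below near $\partial B$, so one inverts $D^+_A\zeta = fs$ to feed the Dirichlet boundary regularity of $\zeta$ (which has zero boundary data) into $s$. Your version is a bit cleaner in the interior, at the cost of invoking one extra Weitzenb\"ock identity (on $E\otimes S_-$); the paper's version works with a single scalar equation for $\zeta$ but must handle the degenerate sphere $\{|x|=\lambda R\}$ by a separate local argument.
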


\begin{proof}
	We can rewrite (\ref{characterisationofeigenstates}) as
	\[
	D_A( \frac{1}{|x|^2-\lambda^2 R^2} D_A\zeta        )=0.
	\]
	This equation is elliptic with uniformly bounded coefficients away from the locus $\{ |x|=\lambda \}$ and the origin, so by the $L^2$ estimate and the zero boundary condition on $\zeta$ , we obtain uniform smooth estimates of $\zeta$ in such region.
	
	Near the locus $\{ |x|=\lambda \}$ we can use the local elliptic regularity of the Dirac equation to estimate $s$ to all orders, which implies the uniform elliptic estimates on $\zeta$ to all orders.
\end{proof}

\begin{rmk}
	If we drop the condition $\lambda<C''$, the arguments above still imply uniform smooth bounds on $s$ away from both the origin and $\partial B$. 
\end{rmk}

\subsection{Large eigenvalues imply interior control}

We study the situation where the eigenvalue is positively bounded below.
\begin{equation}\label{largeeigenvaluecondition}
\lambda> C'>0.
\end{equation}

\begin{lem} 
	We have the estimate
	\begin{equation}
	\int_{B(R)}\frac{1}{|x|^2} |\nabla \zeta|^2(x)d\text{Vol}(x) \leq \frac{C}{R^2}  \int_{B} |s|^2.
	\end{equation}
	where $C$ is an absolute constant.
\end{lem}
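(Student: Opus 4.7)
The plan is to multiply the Weitzenb\"ock identity (\ref{differentialidentityWeitzenbock}) by the weight $\frac{1}{|x|^2}$, integrate over $B=B(R)$, and then exploit the fact that on $\R^4$ the function $|x|^{-2}$ is, up to constant, the fundamental solution of the Laplacian. Concretely, writing $\Delta^{\mathrm{cl}}=-\Delta$ for the analyst's sign of the Laplacian, one has $\Delta^{\mathrm{cl}}|x|^{-2}=-4\pi^2\delta_0$ in the distributional sense on $\R^4$. Rearranging (\ref{differentialidentityWeitzenbock}) divided by $|x|^2$ yields
\[
2\int_{B}\frac{|\nabla\zeta|^2}{|x|^2}=\int_{B}\frac{-\Delta|\zeta|^2}{|x|^2}+\frac{2}{R^2}\operatorname{Re}\int_{B}\frac{\langle\zeta,\, x\cdot s\rangle}{|x|^2}.
\]

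Next I would integrate the first term on the right by parts, working on the annular domain $B\setminus B_\varepsilon(0)$ and then sending $\varepsilon\to 0$. Since $\zeta$ vanishes on $\partial B$, both $|\zeta|^2$ and $\partial_n|\zeta|^2$ vanish there, so the $\partial B$ boundary contribution is zero. The boundary contribution on $\partial B_\varepsilon$ collapses, as $\varepsilon \to 0$, into the delta function arising from $\Delta^{\mathrm{cl}}|x|^{-2}$, giving
\[
\int_{B}\frac{-\Delta|\zeta|^2}{|x|^2}\,d\mathrm{Vol}= -4\pi^2|\zeta(0)|^2.
\]
This contribution has the favorable sign and may simply be discarded, leaving
\[
\int_{B}\frac{|\nabla\zeta|^2}{|x|^2}\leq \frac{1}{R^2}\int_{B}\frac{|\zeta|\,|s|}{|x|}.
\]

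Finally I would apply Cauchy--Schwarz to the right-hand side and then invoke the Hardy-type bound (\ref{ConsequenceofHardy}):
\[
\frac{1}{R^2}\int_{B}\frac{|\zeta|\,|s|}{|x|}\leq \frac{1}{R^2}\Big(\int_{B}\frac{|\zeta|^2}{|x|^2}\Big)^{1/2}\Big(\int_{B}|s|^2\Big)^{1/2}\leq \frac{C\lambda}{R^2}\int_{B}|s|^2.
\]
Since $\lambda\leq 1$ this yields the claimed estimate.

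The only subtle point I expect is the integration by parts step: the weight $|x|^{-2}$ fails to be locally integrable against test functions near the origin in the sense of duality with $\Delta|\zeta|^2$, so one must carry out the computation on $B\setminus B_\varepsilon(0)$ and verify that the surface integral on $\partial B_\varepsilon$ converges (rather than blows up) to the clean delta-type term $-4\pi^2|\zeta(0)|^2$. All other ingredients (the Weitzenb\"ock identity, the Dirichlet condition on $\zeta$, and the Hardy inequality (\ref{ConsequenceofHardy})) are already in hand from the preceding lemma.
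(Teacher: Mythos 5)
Your proposal is correct and essentially identical to the paper's: both integrate the Weitzenb\"ock Poisson equation for $|\zeta|^2$ against (a multiple of) the fundamental solution $|x|^{-2}$ of the Laplacian on $\R^4$, discard the favourably-signed $|\zeta(0)|^2$ term, and close with Cauchy--Schwarz together with the Hardy bound (\ref{ConsequenceofHardy}). The only cosmetic difference is that the paper pairs against the Dirichlet Green's function $\frac{1}{4\pi^2}(|x|^{-2}-R^{-2})$, which produces an extra $R^{-2}\int_B|\nabla\zeta|^2$ term to be absorbed via (\ref{gradientL2zeta}), whereas your direct pairing with $|x|^{-2}$ avoids that step at the cost of checking that the $\partial B$ boundary terms vanish --- which they do, since $\zeta|_{\partial B}=0$ forces $|\zeta|^2$ and $\partial_n|\zeta|^2=2\operatorname{Re}\langle\zeta,\partial_n\zeta\rangle$ both to vanish there.
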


\begin{proof}
	We interpret (\ref{differentialidentityWeitzenbock}) as a Poisson equation for $|\zeta|^2$, with zero boundary data. The Green representation formula gives
	\begin{equation*}
	|\zeta|^2(0)=\frac{1}{4\pi^2}\int_{B(R)} ( \frac{1}{R^2}-\frac{1}{|x|^2}  )\{
	2|\nabla \zeta|^2-2 \text{Re}\langle \zeta, \frac{x\cdot s}{R^2} \rangle
	\}  d\text{Vol}(x)
	\end{equation*}
	By rearranging terms and applying Cauchy-Schwartz inequality,
	\[
	|\zeta|^2(0)+ \frac{1}{4\pi^2}\int_B ( -\frac{1}{R^2}+\frac{1}{|x|^2}  )|\nabla \zeta|^2 \leq \frac{C}{R^2}\int_B |x|^{-1}|s||\zeta|\leq \frac{C}{R^2}(\int_B |s|^2)^{1/2} (\int_B \frac{|\zeta|^2}{|x|^2})^{1/2}.
	\]
	Now we use the consequence of Hardy's inequality (\ref{ConsequenceofHardy}) to bound the RHS by an absolute constant. We then use the $L^2$ gradient estimate on $\zeta$ (\cf (\ref{gradientL2zeta})) to drop $\int_B \frac{1}{R^2} |\nabla \zeta|^2$ from LHS, and drop the term involving $|\zeta|^2(0)$ to see the claim.
\end{proof}

\begin{prop}
	(Interior Morrey estimate for large eigenvalues) We have
	\begin{equation}\label{interiorcontrolforlargeeigenvalue}
	\int_{B}\frac{1}{|x|^2} |s|^2(x)d\text{Vol}(x) \leq \frac{C}{\lambda^2 R^2}  \int_{B} |s|^2.
	\end{equation}
	where $C$ is an absolute constant. In particular, if $\lambda$ is bounded positively below, then for any $0<r<R$, we have the Morrey decay estimate
	\begin{equation}
	\int_{B(r)} |s|^2 \leq \frac{Cr^2}{ R^2}  \int_{B} |s|^2.
	\end{equation}
\end{prop}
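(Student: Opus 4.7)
The plan is to exploit the characterisation equation (\ref{characterisationofeigenstates}) to transfer the weighted bound $\int_B |\nabla\zeta|^2/|x|^2 \leq CR^{-2}\int_B |s|^2$ from the preceding lemma into a weighted bound on $s$ itself. The key observation is that squaring the equation $D^+_A\zeta = \frac{|x|^2-\lambda^2R^2}{2R^2}s$ pointwise gives
\[
\Bigl(\frac{|x|^2-\lambda^2 R^2}{2R^2}\Bigr)^2 |s|^2 = |D^+_A\zeta|^2 \leq 4|\nabla\zeta|^2,
\]
where the pointwise inequality uses that Clifford multiplication is unitary, so $|\sum_\mu c_\mu \nabla_\mu \zeta|^2 \leq 4 \sum_\mu |\nabla_\mu \zeta|^2$ by Cauchy--Schwarz.

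First I would divide by $|x|^2$, integrate, and apply the preceding lemma to obtain
\[
\int_B \frac{(|x|^2-\lambda^2R^2)^2}{4R^4|x|^2}|s|^2 \leq \frac{C}{R^2}\int_B |s|^2.
\]
Then I would expand the quadratic weight as
\[
\frac{(|x|^2-\lambda^2R^2)^2}{4R^4|x|^2} = \frac{|x|^2}{4R^4} - \frac{\lambda^2}{2R^2} + \frac{\lambda^4}{4|x|^2},
\]
and substitute the eigenvalue identity $\int_B |x|^2|s|^2 = \lambda^2 R^2 \int_B |s|^2$. The first two terms combine to give $-\frac{\lambda^2}{4R^2}\int_B|s|^2$, so after rearranging, only the singular third term survives on the left:
\[
\frac{\lambda^4}{4}\int_B \frac{|s|^2}{|x|^2} \leq \frac{C}{R^2}\int_B |s|^2 + \frac{\lambda^2}{4R^2}\int_B |s|^2 \leq \frac{C'}{R^2}\int_B |s|^2,
\]
using $\lambda \leq 1$. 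This yields the unconditional estimate $\int_B |s|^2/|x|^2 \leq C/(\lambda^4 R^2)\int_B|s|^2$; invoking the large-eigenvalue hypothesis (\ref{largeeigenvaluecondition}) absorbs one factor of $\lambda^{-2}$ into the constant, producing the stated inequality. The Morrey decay is then an immediate corollary via $\int_{B(r)}|s|^2 \leq r^2 \int_{B(r)}|s|^2/|x|^2 \leq Cr^2 R^{-2} \int_B|s|^2$.

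I do not anticipate any serious analytic obstacle; the argument is essentially algebraic once the squared Dirac equation is coupled with the weighted gradient lemma. The only subtle point is the appearance of $\lambda^{-4}$ before invoking the lower bound on $\lambda$, which transparently accounts for the conditional nature of the Morrey estimate: uniformity must fail as $\lambda \to 0$, where small-eigenvalue concentration near the origin, as in (\ref{concentrationnearorigin}), dominates the behaviour instead.
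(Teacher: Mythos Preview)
Your argument is correct and follows the same route as the paper: combine the pointwise identity $|D^+_A\zeta|=\bigl|\tfrac{|x|^2-\lambda^2R^2}{2R^2}\bigr|\,|s|$ from (\ref{characterisationofeigenstates}) with the weighted gradient bound $\int_B|\nabla\zeta|^2/|x|^2\leq CR^{-2}\int_B|s|^2$ of the preceding lemma, then expand and use the eigenvalue identity. The paper's proof records the resulting inequality in a single terse line and declares the rest clear; your version spells out the expansion and makes explicit that the raw exponent obtained is $\lambda^{-4}$, with the large-eigenvalue hypothesis (\ref{largeeigenvaluecondition}) absorbing one factor of $\lambda^{-2}$ into the constant. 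That explicit bookkeeping is arguably cleaner, since it shows precisely where the ``large eigenvalue'' assumption enters and why uniformity must fail as $\lambda\to 0$.
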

\begin{proof}
	We apply (\ref{characterisationofeigenstates}) and the above Lemma to see
	\[
	\int_B (\frac{\lambda^2}{2}-\frac{|x|^2}{R^2}   ) \frac{|s|^2}{|x|^2} d\text{Vol}\leq \frac{C}{R^2}  \int_{B} |s|^2.
	\]
	The result is then clear.
\end{proof}

\begin{rmk}
	This estimate, ultimately due to the Weitzenb\"ock formula, is highly \textbf{non-perturbative} because it holds even when there can be an arbitrarily large amount of curvature concentrating to the origin.
	The intuition is that if $\lambda$ is bounded below by some positive constant, then the characteristic length scale of the Dirac field is much larger than the scale of the concentrated curvature, hence the curvature singularity is not very visible to the solution. 
\end{rmk}

\section{Convergence theory for Bergmann spaces}\label{ConvergencetheoryforBergmannspaces}

Given a sequence $A_i$ or a one-paramter family $\{A_t\}_{t>0}$ of smooth ASD connections on $\bar{B}$ forming a curvature singularity at the origin (\cf the setup in Chapter \ref{ThespectralproblemChapter}). Away from the origin, we assume the connections converge in $C^\infty_{\text{loc}}$ to a connection $A_\infty$ on $E|_{B\setminus \{0\}}$, which is necessarily smooth and ASD, so by the removable singularity theorem $(E|_{B\setminus \{0\}}, A_\infty)$ extends to $(\tilde{E}, A_\infty)$. Here $\tilde{E}$ is conceptually a different topological bundle from $E$, although their $L^2$ sections can be identified. We may assume $A_\infty$ has small $L^2$ curvature, by possibly shrinking $B$. The \textbf{convergence problem} asks for a limiting description of the corresponding Bergmann spaces $H^2_{D_A}$.

The basic picture is that the part of the spectrum for $A$ above a threshold value converges to the spectrum for $A_\infty$ (\cf  Theorem \ref{Quantitativespectralgap}, Proposition \ref{spectralconvergence}); the 1-parameter family of Bergmann spaces $H^2_{D_{A_t}}$ converge in a natural way to $H^2_{D_{A_\infty}}\oplus V$ where $V$ is a finite dimensional space (\cf Theorem \ref{limitBergmannspace}); the natural operators on Bergmann spaces extend naturally to the limit space (\cf Proposition \ref{limitToeplitzoperator}, \ref{limitGreenoperator}).

\begin{rmk}
If we think of the Dirac equation as the analogue of the $\bar{\partial}$ equation, then this convergence theory is somewhat analogous to the picture in \cite{DonaldsonSun2}.
\end{rmk}

\subsection{Convergence of eigenstates}\label{Convergenceofeigenstates}

We consider a sequence of eigenstates $s_i$ associated to the connections $A_i$, with $L^2$ norms equal to 1, the eigenvalues $0<\lambda_i<1$ and corresponding coupled  positive spinor fields $\zeta_i$ solving (\ref{characterisationofeigenstates}), and we ask for a convergence theory of $s_i$. Without loss of generality $\lambda_i\to \lambda_\infty$ converges, after taking subsequence. The basic picture of Chapter \ref{ThespectralproblemChapter} implies:
\begin{itemize}
	\item If $\lambda_i\to 0$, then the density of $s_i$ is concentrated to the origin, so $s_i$ converges to zero weakly.
	\item If $\lambda_i\to 1$, then the density of $s_i$ is concentrated to the boundary, so $s_i$ converges to zero weakly.	
\end{itemize}

Now let us assume the uniform two sided eigenvalue bound $0<C'<\lambda_i< C''<1$. By the main results of Chapter \ref{ThespectralproblemChapter}, in any given annulus region $0<r\leq |x|\leq R$, we have uniform smooth estimates on all the data $\lambda_i, s_i, \zeta_i$, so we can extract a subsequence to ensure smooth convergence, by standard compactness arguments. A standard diagonal argument implies we can assume \textbf{$C^\infty_{loc}$ convergence away from the origin}, to the limiting data $\lambda_\infty$, $s_\infty$ and $\zeta_\infty$, which satisfy the limiting version of (\ref{characterisationofeigenstates}) on $B\setminus \{0\}$.

Since we have uniform estimates $\norm{s_i}_{L^2}=1$ and $\norm{\zeta_i}_{W^{2,1}}\leq C$ (\cf (\ref{gradientL2zeta})), and the norms cannot increase in the limit, so the same estimates hold for $s_\infty$ and $\zeta_{\infty}$.
By elliptic regularity of this limiting PDE system, the limiting data extend smoothly across the origin to give a solution of (\ref{characterisationofeigenstates}).

Crucially, we claim \textbf{strong convergence} of $s_i$ to $s_\infty$ inside $L^2$. The only possible issue is to lose $L^2$ mass to the origin. But this cannot happen thanks to the uniform interior Morrey estimate (\ref{interiorcontrolforlargeeigenvalue}).

In particular, the norm of $s_\infty$ does not collapse to zero, so $s_\infty\in H^2_{D_{A_\infty}}$ is an eigenstate with eigenvalue $\lambda_\infty$.

\subsection{The spectral gap}

A rather striking consequence of Section \ref{Convergenceofeigenstates} is
\begin{cor}
	(Spectral gap) Suppose the minimal eigenvalue for the limiting connection $A_\infty$ is $\lambda_0>0$. Then either $\lambda_\infty=0$ or $\lambda_\infty\geq \lambda_0$. 
\end{cor}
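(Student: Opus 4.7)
The plan is to derive the spectral gap as a direct consequence of the convergence theory developed in Section~\ref{Convergenceofeigenstates}. Assume $\lambda_\infty > 0$; the goal is to show $\lambda_\infty \geq \lambda_0$.

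First I would dispose of the degenerate case $\lambda_\infty = 1$. By definition of $\lambda$, every eigenvalue for a smooth ASD connection on $B$ satisfies the strict bound $\lambda < 1$, since $|x|^2/R^2 < 1$ on the open ball and an eigenstate cannot be supported on $\partial B$. In particular $\lambda_0 < 1$, so the inequality $\lambda_\infty \geq \lambda_0$ holds automatically in this case. Therefore I may assume a uniform two-sided bound $0 < C' \leq \lambda_i \leq C'' < 1$ for all sufficiently large $i$, after passing to a subsequence.

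Next I would invoke the convergence machinery of Section~\ref{Convergenceofeigenstates}. Under the two-sided eigenvalue bound, Proposition~\ref{smoothcontrolawayfromorigin} gives uniform smooth control of $(s_i,\zeta_i)$ on every annulus $\{r \leq |x| \leq R\}$, so a diagonal extraction produces a subsequence with $C^\infty_{\mathrm{loc}}$ convergence on $B \setminus \{0\}$ to limits $(s_\infty, \zeta_\infty)$ satisfying the limiting version of \eqref{characterisationofeigenstates} for $A_\infty$. The uniform $L^2$ bound on $s_i$ and the $W^{2,1}$ bound \eqref{gradientL2zeta} on $\zeta_i$ are preserved under weak limits, and since $A_\infty$ extends smoothly across the origin on $\tilde E$ by removable singularity, elliptic regularity for the limiting system promotes $(s_\infty,\zeta_\infty)$ to a smooth global solution of \eqref{characterisationofeigenstates} on $\tilde E$.

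The decisive step, which I expect to be the main obstacle, is strong $L^2$ convergence $s_i \to s_\infty$, since this is what guarantees $s_\infty \neq 0$. This is precisely the role of the interior Morrey estimate \eqref{interiorcontrolforlargeeigenvalue}: the hypothesis $\lambda_i \geq C' > 0$ yields $\int_{B(r)} |s_i|^2 \leq C r^2 / ((C')^2 R^2)$ for every $r > 0$, uniformly in $i$, so no $L^2$ mass of $s_i$ can escape into an arbitrarily small neighborhood of the origin. Hence $\|s_\infty\|_{L^2} = 1$, and $s_\infty$ is a genuine eigenstate of the spectral problem for $A_\infty$ with eigenvalue $\lambda_\infty^2/2$. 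Minimality of $\lambda_0$ then gives $\lambda_\infty \geq \lambda_0$. Observe that the other alternative $\lambda_\infty = 0$ cannot be excluded in general precisely because the Morrey bound degenerates as $\lambda_i \to 0$; in that regime the $L^2$ mass is free to concentrate at the origin and the limit $s_\infty$ can vanish.
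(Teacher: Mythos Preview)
Your proposal is correct and follows essentially the same route as the paper: assume $\lambda_\infty\neq 0$, dispose of the boundary case $\lambda_\infty=1$ (where $\lambda_\infty\geq\lambda_0$ is trivial since $\lambda_0<1$), obtain a two-sided bound on $\lambda_i$, and then invoke the convergence theory of Section~\ref{Convergenceofeigenstates} with the Morrey estimate~\eqref{interiorcontrolforlargeeigenvalue} to produce a nonzero limiting eigenstate for $A_\infty$. The paper's own proof is a two-sentence pointer to that same machinery; you have simply unpacked it.
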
 
\begin{proof}
	If the limit of eigenvalues $\lambda_\infty\neq 0$, then we can assume a positive lower bound on $\lambda_i$. Unless $\lambda_\infty=1$, we can also assume an upper bound smaller than 1. So we are in the situation above and we see the result from the good convergence theory.
\end{proof}

\begin{lem}
	In the special case that $A_\infty$ is the trivial flat connection, the minimal eigenvalue is $\frac{1}{2}\lambda^2=\frac{1}{3}$,  attained precisely for 
	for parallel Dirac fields.
\end{lem}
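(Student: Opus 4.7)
The plan is to diagonalize the quadratic form $H$ on $H^2_D$ explicitly, by exploiting the scaling symmetry of the trivial flat connection. Since $A_\infty$ is flat, the Weitzenb\"ock identity gives $D^2 = \Lap$, so every $s \in H^2_D$ is componentwise harmonic and hence real analytic on $B(R)$. Expanding $s$ in its Taylor series at the origin, $s = \sum_{k \geq 0} s_k$ with $s_k$ the degree-$k$ homogeneous part, the fact that $D$ is a first order constant coefficient operator (so it sends homogeneous degree $k$ to homogeneous degree $k-1$) forces each $s_k$ to satisfy $D s_k = 0$ separately. Let $V_k \subset H^2_D$ denote the finite-dimensional subspace of degree-$k$ polynomial Dirac fields; in particular $V_0$ is just the space of constant (parallel) spinors on $B(R)$.

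The key observation is that the subspaces $V_k$ are mutually orthogonal, both in $L^2(B(R))$ and in the weighted pairing $(s,t) \mapsto \int_{B(R)} |x|^2 \langle s, t\rangle$. In polar coordinates,
\[
\int_{B(R)} \langle s_k, s_l \rangle\, dV = \Big(\int_0^R r^{k+l+3}\, dr\Big) \int_{S^3} \langle s_k, s_l \rangle\, d\omega,
\]
and the angular factor vanishes for $k \neq l$ because each spinorial component of $s_k$ is a degree-$k$ harmonic polynomial on $\R^4$ (using $\Lap s_k = D^2 s_k = 0$), and distinct-degree harmonic polynomials are orthogonal on $S^3$ by classical spherical harmonic theory; inserting an extra factor of $r^2$ handles the weighted pairing identically. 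I would moreover verify that $H^2_D = \overline{\bigoplus_{k \geq 0} V_k}$ as a Hilbert space, either via the Poisson-type integral representation of harmonic Dirac fields on the ball, or by a standard separation of variables analysis using the sphere Dirac operator.

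For a single mode $0 \neq s_k \in V_k$, homogeneity yields
\[
\int_{B(R)} |s_k|^2\, dV = \frac{R^{2k+4}}{2k+4} \int_{S^3} |s_k|^2\, d\omega, \qquad \int_{B(R)} |x|^2 |s_k|^2\, dV = \frac{R^{2k+6}}{2k+6} \int_{S^3} |s_k|^2\, d\omega,
\]
so the Rayleigh quotient $H(s_k)/\norm{s_k}^2$ equals $\frac{k+2}{2(k+3)}$, which is strictly increasing in $k$. For a general $s = \sum_k s_k$, the orthogonalities mean $H(s)/\norm{s}^2$ is a weighted average of these single-mode values, hence bounded below by the $k = 0$ value $\frac{1}{3}$, with equality if and only if $s_k = 0$ for all $k \geq 1$, that is, $s \in V_0$ is a parallel Dirac field.

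The one step requiring genuine care is the density assertion $H^2_D = \overline{\bigoplus V_k}$; the remaining manipulations are routine integrations once the scaling structure is in place.
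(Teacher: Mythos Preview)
Your argument is correct and complete once the density claim is filled in, which is indeed routine: for a harmonic $L^2$ function on the ball the homogeneous components are mutually orthogonal, so summing $\int_{B(r)}|u_k|^2$ over $k$ and letting $r\to R$ with monotone convergence shows the Taylor series converges to $u$ in $L^2(B(R))$.

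Your route is genuinely different from the paper's. The paper does not decompose $H^2_D$ at all; it simply observes that $\nabla^*\nabla s=0$ forces $|s|^2$ to be subharmonic, so the spherical average $r^{-3}\int_{\partial B(r)}|s|^2$ is non-decreasing in $r$, and then a one-line rearrangement gives that the Rayleigh quotient is minimised exactly when this average is constant, i.e.\ $\nabla s=0$. This is shorter and avoids any completeness issue, but it only sees the bottom eigenvalue. Your harmonic-polynomial diagonalisation is more laborious but yields the entire spectrum $\{\frac{k+2}{2(k+3)}:k\geq 0\}$ with multiplicities $\dim V_k$, which is strictly more information and makes the accumulation at $\frac{1}{2}$ visible.
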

\begin{proof}
	Finding the minimal eigenvalue is the same as minimising the functional 
	\[
	H(s)=\int_{B}  \frac{|x|^2}{2R^2} |s|^2 
	\]
	subject to $Ds=0$ and $\norm{s}_{L^2}=1$.

	But we know by Weitzenb\"ock formula that $\nabla^*\nabla s=0$, so $|s|^2$ is subharmonic, with forcing term given by $2|\nabla s|^2$. This means the spherical average
	\[
	r^{-3} \int_{ \partial B(r)} |s|^2
	\]
	is a non-decreasing function in $r$. From this it is clear that the only way to minimise the functional is for the spherical averages to be constant, which implies $|\nabla s|=0$.
\end{proof}

We may also record a quantitative version of the spectral gap:

\begin{thm}\label{Quantitativespectralgap}
	(Quantitative \textbf{spectral gap}) 
	Let $A$ be an ASD connection on $\bar{B}=\overline{ B(R)}$ with fixed local smooth bounds on any compact set away from the origin. For 	
	given small numbers $\delta_1$, $\delta_2$, there are small constants $\epsilon$ and $\Lambda$, such that if $A$ satisifies
	\[
	\int_{B\setminus B(\Lambda R)}|F|^2 \leq \epsilon,\]
	then any eigenvalue $\frac{1}{2}\lambda^2$ must satisfy the dichotomy
	\[
	0<\lambda<\delta_1, \text{ or } \lambda>\sqrt{2/3}-\delta_2.
	\] 
\end{thm}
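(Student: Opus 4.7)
My plan is to argue by contradiction using a compactness argument built on the convergence theory of Section \ref{Convergenceofeigenstates} and the explicit flat-model calculation given just before the theorem. Suppose the dichotomy fails. Then for some fixed $\delta_1,\delta_2>0$ we can find sequences $\epsilon_i\to 0$, $\Lambda_i\to 0$, ASD connections $A_i$ on $\bar B=\overline{B(R)}$ satisfying the uniform local smooth bounds away from the origin and $\int_{B\setminus B(\Lambda_i R)}|F_{A_i}|^2\leq \epsilon_i$, together with eigenstates $s_i$ whose eigenvalues $\tfrac12\lambda_i^2$ fall in the forbidden interval, \ie $\delta_1\leq \lambda_i \leq \sqrt{2/3}-\delta_2$. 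After passing to a subsequence, $\lambda_i\to \lambda_\infty \in[\delta_1,\sqrt{2/3}-\delta_2]$.

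The first substantive step is to identify the limiting connection. Because the curvature density is uniformly bounded on every compact subset of $B\setminus\{0\}$ and its $L^2$ mass tends to zero there, an Uhlenbeck-type gauge fixing produces a $C^\infty_{\text{loc}}$ subsequential limit $A_\infty$ on $B\setminus\{0\}$ which is flat. Since $B\setminus\{0\}\subset \R^4$ is simply connected, $A_\infty$ is gauge equivalent to the trivial flat connection on $B\setminus\{0\}$, and Uhlenbeck's removable singularity theorem extends it across the origin to the trivial flat connection on $B$. I would absorb the gauge transformation into the sequence $A_i$, so that henceforth $A_i\to A_\infty=d$ smoothly away from the origin.

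The second step is the convergence theory. Because $\lambda_i$ is bounded away from both $0$ and $1$, the whole apparatus of Section \ref{Convergenceofeigenstates} applies: the uniform smooth estimates of Proposition \ref{smoothcontrolawayfromorigin} yield $C^\infty_{\text{loc}}$ convergence $s_i\to s_\infty$ on $B\setminus\{0\}$, the $W^{2,1}$ bound (\ref{gradientL2zeta}) together with the limiting PDE (\ref{characterisationofeigenstates}) lets $s_\infty$ extend smoothly across the origin, and crucially the interior Morrey estimate (\ref{interiorcontrolforlargeeigenvalue}) --- uniform in $i$ because $\lambda_i\geq \delta_1>0$ --- prevents any $L^2$ mass from being lost to the origin, so the convergence $s_i\to s_\infty$ is strong in $L^2$. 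In particular $s_\infty$ has unit $L^2$ norm and is a genuine eigenstate of $D_{A_\infty}^-$ with eigenvalue $\tfrac12\lambda_\infty^2$.

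The contradiction is then immediate from the preceding Lemma: the minimal eigenvalue for the trivial flat connection equals $\tfrac13$, \ie the smallest admissible $\lambda$ is $\sqrt{2/3}$, yet we have produced an eigenstate with $\lambda_\infty\leq \sqrt{2/3}-\delta_2$. The main obstacle I anticipate is in the first step: making sure the local Uhlenbeck gauge on $B\setminus\{0\}$ can be globalised and then passed through the removable singularity theorem in a way that turns $A_\infty$ honestly into the trivial flat connection (so that the Lemma's hypothesis is met). Once that gauge bookkeeping is done, everything else follows mechanically from the quantitative spectral results already established in Chapter \ref{ThespectralproblemChapter}.
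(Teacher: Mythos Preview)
Your proposal is correct and follows essentially the same compactness argument as the paper: assume a counterexample sequence, extract a flat limit $A_\infty$ on the punctured ball via Uhlenbeck compactness, and use the strong convergence of eigenstates from Section~\ref{Convergenceofeigenstates} together with the flat-model Lemma to derive a contradiction. The paper's own proof is considerably more terse, but your expanded version (including the gauge bookkeeping you flag) is exactly what lies behind its appeal to ``standard compactness theory'' and ``the previous discussions.''
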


\begin{proof}
	This is the rigid version of the previous spectral gap result. The proof is a compactness argument. More precisely, assume a counterexample sequence $(A_i, s_i, \lambda_i)$  on $B$, then standard compactness theory implies $A_i\to A_\infty$ on a shrinked punctured disk $B\setminus \{0\}$. Here $A_\infty$ must be flat because its $L^2$ curvature vanishes. By the previous discussions, after taking subsequence $\lambda_i\to \lambda_\infty$ with $\lambda_\infty=0$ or $\lambda_\infty\geq \sqrt{2/3}$, which would give a contradiction if $\lambda_i$ fails the dichotomy.
\end{proof}

\begin{rmk}
	It is curious what the physical interpretation of the spectral gap should be.
\end{rmk}

Since we assume in this Chapter that $A_\infty$ has small $L^2$ curvature, the spectral gap theorem suggests us to separate the spectrum associated to $A_i$ into two parts: the \textbf{large spectrum} with $\lambda> \sqrt{2/3}-\delta_2$, and the \textbf{small spectrum} with $\lambda<\delta_1$. Correspondingly, the Bergmann spaces decompose as
\[
H^2_{D_{A_i}}=( H^2_{D_{A_i}})_{large} \oplus (H^2_{D_{A_i}})_{small}.
\]

\subsection{Convergence in the large spectrum}

The convergence theory for eigenstates can be rather formally extended to a convergence theory for the large spectrum.
For any given $A_i$, let $s^{j}_i$ be an orthonormal basis of eigenstates belonging to the large spectrum, where $j$ is arranged in increasing order of eigenvalues. We associate the data $\zeta^{j}_i$ and $\lambda^{j}_i$ in a self explanatory way. We will show

\begin{prop}\label{spectralconvergence}
	(\textbf{Spectral convergence}) The large spectrum for $A_i$ converges  to the spectrum of the smooth limit $A_\infty$, \ie if we index the eigenvalues for $A_\infty$ in increasing order as $\lambda_0^k$, then 
	\begin{equation}
	\lambda^k_0=\lim_{i\to \infty} \lambda^k_i.
	\end{equation}
\end{prop}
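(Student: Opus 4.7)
The plan is to establish matching upper and lower bounds on each $\lambda_i^k$, using the Rayleigh--Ritz min-max characterization
\[
\tfrac{1}{2}(\lambda^k)^2 \;=\; \min_{\substack{V \subset H^2_D \\ \dim V = k}} \; \max_{\substack{s\in V,\ \|s\|_{L^2}=1}} H(s),
\]
combined with the convergence theory for individual eigenstates from Section \ref{Convergenceofeigenstates} and a cutoff/projection test-space construction.

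For the upper bound $\limsup_i \lambda_i^k \le \lambda_0^k$, I take the first $k$ eigenstates $s_0^1,\dots,s_0^k$ of $A_\infty$, fix a cutoff $\chi_\epsilon$ that vanishes on $B(\epsilon)$ and equals $1$ off $B(2\epsilon)$, and form the test $k$-plane spanned by
\[
\tilde s_i^j \;:=\; P_{A_i}(\chi_\epsilon s_0^j) \;=\; \chi_\epsilon s_0^j \;-\; D^+_{A_i} G_{A_i} D^-_{A_i}(\chi_\epsilon s_0^j) \;\in\; H^2_{D_{A_i}},
\]
where I exploit the identification of $L^2$ sections of $E$ and $\tilde E$ away from the origin. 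Because $D^-_{A_\infty} s_0^j = 0$ and $A_i\to A_\infty$ smoothly on $\operatorname{supp}(\chi_\epsilon)$,
\[
D^-_{A_i}(\chi_\epsilon s_0^j) \;=\; (\nabla\chi_\epsilon)\cdot s_0^j \;+\; \chi_\epsilon (A_i-A_\infty)\cdot s_0^j
\]
has $L^2$ norm tending to zero by taking $i\to\infty$ first and then $\epsilon\to 0$ (the cutoff term gains a factor of $\epsilon$ from $\|s_0^j\|_{L^2(B(2\epsilon))} = O(\epsilon^2)$, since $s_0^j$ is smooth on $\bar B$). A uniform $L^2\to L^2$ bound on $G_{A_i}$ is available from Kato's inequality plus the connection-independent Poincar\'e inequality; this gives a uniform bound on $D^+_{A_i} G_{A_i}$ and hence $\tilde s_i^j - \chi_\epsilon s_0^j \to 0$ in $L^2$. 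The Gram matrix of the $\tilde s_i^j$ tends to the identity, $H(\tilde s_i^j) \to H(s_0^j) = \tfrac{1}{2}(\lambda_0^j)^2$, and a diagonal sequence in $(i,\epsilon)$ yields the upper bound by min-max.

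For the lower bound $\liminf_i \lambda_i^k \ge \lambda_0^k$, the upper bound just established forces $\lambda_i^j \le \lambda_0^j + o(1) \le \lambda_0^k + o(1) < 1$ for every $j\le k$, while the large-spectrum hypothesis gives $\lambda_i^j \ge \sqrt{2/3}-\delta_2$. Thus all the relevant eigenvalues stay in a compact subinterval of $(0,1)$, and the convergence theory of Section \ref{Convergenceofeigenstates} applies: along a subsequence, $\lambda_i^j \to \lambda_\infty^j$ and $s_i^j \to s_\infty^j$ strongly in $L^2$ for each $j\le k$. Strong $L^2$ convergence (which preserves orthonormality of the $s_\infty^j$) follows from smooth convergence on each annulus $\{|x|\ge r\}$ together with the interior Morrey estimate (\ref{interiorcontrolforlargeeigenvalue}) on $B(r)$. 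The $s_\infty^j$ then form an orthonormal system of eigenstates of $A_\infty$ with eigenvalues $\tfrac{1}{2}(\lambda_\infty^j)^2$, ordered, so min-max on their span gives $\lambda_0^k \le \lambda_\infty^k = \lim_i \lambda_i^k$, matching the upper bound.

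The main obstacle is precisely the strong $L^2$ convergence in the lower bound: a priori a positive fraction of $\|s_i^j\|_{L^2}^2$ could drain into a shrinking neighborhood of the origin as $A_i$ develops its curvature singularity, leaving a defect in the limiting Gram matrix and invalidating the min-max argument. This is exactly what the non-perturbative interior Morrey estimate from Chapter \ref{ThespectralproblemChapter} rules out, as long as the eigenvalue remains bounded away from zero, which is automatic in the large-spectrum range; with this input in hand the remaining steps are essentially orchestration.
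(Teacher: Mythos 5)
Your approach is genuinely different from the paper's. The paper proves the upper bound $\lambda_0^k \geq \lim\lambda_i^k$ (which it phrases as exhibiting each $\lambda_0^m$ as a limit of eigenvalues) by first establishing the approximation lemma that every $s\in H^2_{D_{A_\infty}}$ is a strong $L^2$ limit of elements of $H^2_{D_{A_i}}$, via a direct analysis of the decompositions $s = D_{A_i}\tau_i + \sigma_i$ and Fatou-type norm accounting, and then pushing eigenstates through the decomposition with a Fourier-coefficient argument. You instead build explicit cutoff-and-project test sections $\tilde s_i^j = P_{A_i}(\chi_\epsilon s_0^j)$ and invoke Rayleigh--Ritz. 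The cutoff estimates, the uniform bound on $D^+_{A_i}G_{A_i}$ via Kato plus Poincar\'e, and the lower-bound argument via compactness and min-max are all sound, and the lower bound portion is essentially the paper's generalised spectral gap lemma.

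However, your upper bound as written has a genuine gap in the dimension count. Rayleigh--Ritz with a $k$-dimensional test space $V \subset H^2_{D_{A_i}}$ bounds $\mu_k(A_i)$, the $k$-th smallest eigenvalue of $L$ on \emph{all} of $H^2_{D_{A_i}}$. But $\lambda_i^k$ in Proposition \ref{spectralconvergence} is defined as the $k$-th eigenvalue of the \emph{large} spectrum, i.e. $\lambda_i^k = \mu_{N+k}(A_i)$ where $N = \dim(H^2_{D_{A_i}})_{small}$ is the constant finite number established later in the Chapter. Whenever curvature actually concentrates, $N\geq 1$, so $\lambda_i^k \geq \mu_k(A_i)$ and your bound $\mu_k(A_i) \leq \tfrac12(\lambda_0^k)^2 + o(1)$ is strictly weaker than the required $\lambda_i^k \leq \lambda_0^k + o(1)$. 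Moreover, since the lower-bound half of your argument feeds off the upper bound (to exclude $\lambda_i^j\to 1$ and get the two-sided bound required by Section \ref{Convergenceofeigenstates}), the gap propagates.

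The fix is not hard but needs to be said: enlarge the test space to the span of your $\tilde s_i^j$ together with the $N$ small-spectrum eigenstates of $A_i$, yielding an $(N+k)$-dimensional space $V$. The small-spectrum eigenstates contribute $H$-values at most $\tfrac12\delta_1^2$, which is below $\tfrac12(\lambda_0^1)^2$; and they are asymptotically orthogonal to your $\tilde s_i^j$ because, by the concentration estimate (\ref{concentrationnearorigin}), a small-eigenstate $\sigma$ satisfies $\|\sigma\|_{L^2(B\setminus B(\epsilon))}\to 0$ as $i\to\infty$ for fixed $\epsilon$ (the small eigenvalues tend to zero by the spectral gap), while $\chi_\epsilon s_0^j$ is supported away from $B(\epsilon)$ and $\langle \tilde s_i^j,\sigma\rangle = \langle \chi_\epsilon s_0^j,\sigma\rangle$. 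With this near-orthogonality, $\dim V = N+k$ for large $i$ and $\max_{s\in V,\|s\|=1}H(s)\leq \tfrac12(\lambda_0^k)^2 + o(1)$, giving the desired bound on $\mu_{N+k}(A_i) = \lambda_i^k$. Once this is in place the rest of your argument goes through.
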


\begin{rmk}
	We will henceforth often suppress mentioning taking subsequences, and we shall tacitly use diagonal arguments. A posteriori we shall see that this is not necessary due to the uniqueness of limit.
\end{rmk}

We first introduce an \textbf{algorithm}. By previous work in this Chapter, either $\lambda^1_i\to 1$, or they satisfy two sided bounds so that $s^1_i$ converges strongly to $s^1_\infty$, which is some eigenstate in $H^2_{A_\infty}$ with eigenvalue $\lambda^1_\infty$. In the first case, we terminate and define $\lambda^k_\infty=1$ for all $k$.  In the second case,
we proceed with $\lambda^2_i$. We either terminate after a finite stage (which a posteriori does not happen), or continue indefinitely to achieve a sequence of limiting eigenstates $s^k_\infty$ with eigenvalues $\lambda^k_\infty$, which must be orthonormal by strong convergence.
The algorithm implies

\begin{lem}
	(generalised spectral gap) \begin{equation}
	\lambda_\infty^k \geq \lambda_0^k.
	\end{equation}
	The equality is achieved precisely if every eigenvalue $\lambda^k_0$ for $A_\infty$ arises as subsequential limits of eigenvalues, including multiplicity.
\end{lem}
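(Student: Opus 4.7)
The plan is to reduce the statement to the Courant--Fischer min-max characterization of eigenvalues of the self-adjoint compact perturbation $L_{A_\infty} = \tfrac{1}{2} + K_{A_\infty}$, using the orthonormality of the limiting eigenstates produced by the algorithm.

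First, I would verify that the family $\{s^k_\infty\}_k$ generated by the algorithm is an orthonormal set of eigenstates of $L_{A_\infty}$ in $H^2_{D_{A_\infty}}$ with eigenvalues $\tfrac{1}{2}(\lambda^k_\infty)^2$ arranged in non-decreasing order. Each $s^k_\infty$ is an eigenstate by the convergence theory of Section \ref{Convergenceofeigenstates}. Orthonormality is inherited from orthonormality of $\{s^k_i\}_k$ together with the strong $L^2$ convergence $s^k_i \to s^k_\infty$ afforded by the interior Morrey estimate (\ref{interiorcontrolforlargeeigenvalue}), which rules out mass concentration at the origin. Monotonicity of $k \mapsto \lambda^k_\infty$ passes to the limit from the monotonicity of $k\mapsto \lambda^k_i$.

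Next, I would apply min-max to the operator $L_{A_\infty}$:
\[
\tfrac{1}{2}(\lambda^k_0)^2 \;=\; \min_{\substack{V \subset H^2_{D_{A_\infty}} \\ \dim V = k}} \; \max_{\substack{v\in V \\ \norm{v}_{L^2}=1}} \, \langle L_{A_\infty} v, v\rangle.
\]
Choosing $V = \mathrm{span}(s^1_\infty,\ldots, s^k_\infty)$, the restriction of $L_{A_\infty}$ to $V$ is diagonal with eigenvalues $\tfrac{1}{2}(\lambda^j_\infty)^2$, $1\leq j\leq k$, so its maximum Rayleigh quotient equals $\tfrac{1}{2}(\lambda^k_\infty)^2$. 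This gives $\lambda^k_0 \leq \lambda^k_\infty$. In the degenerate case where the algorithm terminates at some stage $k_0 \leq k$, we have $\lambda^k_\infty = 1 > \lambda^k_0$ by convention and the inequality is automatic.

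For the equality characterization, I would use that $L_{A_\infty}$, being a compact perturbation of a scalar, has an orthonormal eigenbasis with eigenvalues exhausting $\{\tfrac{1}{2}(\lambda^k_0)^2\}_k$ including multiplicity. If every $\lambda^k_0$ arises as a subsequential limit with correct multiplicity, then the orthonormal family $\{s^k_\infty\}_k$ realizes the same eigenvalue multiset as the full spectrum; ordering both sides yields $\lambda^k_\infty = \lambda^k_0$ for all $k$. Conversely, if $\lambda^k_\infty = \lambda^k_0$ for every $k$, then both ordered sequences coincide as multisets, which is precisely the statement that each $\lambda^k_0$ arises (with its multiplicity) as a subsequential limit.

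The main obstacle is the orthonormality of the limit family, which would fail if $L^2$ mass of eigenstates escaped to the origin during the limiting process; this is exactly the non-perturbative phenomenon controlled by the Weitzenb\"ock-based Morrey estimate of Chapter \ref{ThespectralproblemChapter}, and is the essential analytic input. Once that is in hand, the remainder of the argument is standard spectral theory.
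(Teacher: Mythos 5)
Your proposal is correct and supplies exactly the argument the paper leaves implicit (the paper states only that ``the algorithm implies'' the lemma without spelling out the reasoning): the orthonormal limit family furnished by the algorithm, fed into the Courant--Fischer min-max characterisation of the eigenvalues of $L_{A_\infty}$, yields $\lambda^k_0 \le \lambda^k_\infty$, with the degenerate terminating case handled separately by $\lambda^k_\infty = 1 > \lambda^k_0$. You also correctly identify the essential analytic input as the strong $L^2$ convergence guaranteed by the Morrey estimate, which is what prevents the limit family from degenerating and is where the non-trivial content lies.
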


Our next step is to show

\begin{lem}
	If $s\in H^2_{D_{A_\infty}}$, then it is a strong $L^2$ limit of sections of $H^2_{D_{A_i}}$.
\end{lem}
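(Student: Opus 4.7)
The plan is to approximate $s$ by smooth cutoffs supported away from the origin, transfer them to the $A_i$-side via the canonical identification on $B\setminus\{0\}$, and project onto each target Bergmann space. Because $A_\infty$ extends smoothly across the origin on $\tilde E$ and $s\in H^2_{D_{A_\infty}}$, elliptic regularity for $D^-_{A_\infty} s = 0$ shows that $s$ is smooth across the origin, in particular bounded near $0$. Pick cutoffs $\chi_\epsilon$ with $\chi_\epsilon \equiv 0$ on $B(\epsilon)$, $\chi_\epsilon \equiv 1$ outside $B(2\epsilon)$, and $|d\chi_\epsilon| \leq C/\epsilon$. Since $\tilde E$ and $E$ are canonically isomorphic on $B\setminus\{0\}$, the section $\chi_\epsilon s$ may be regarded as a smooth compactly supported section of $E\otimes S_-$, and $\chi_\epsilon s \to s$ in $L^2$ as $\epsilon \to 0$.

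Using $D^-_{A_\infty} s = 0$, one computes
\[
D^-_{A_i}(\chi_\epsilon s) \;=\; c(d\chi_\epsilon)\,s \;+\; \chi_\epsilon\, c(A_i - A_\infty)\,s,
\]
where $A_i - A_\infty$ is interpreted on $\mathrm{supp}\,\chi_\epsilon$ via the canonical identification. The first term has $L^2$ norm at most $C\,\epsilon\,\|s\|_{L^\infty(B(2\epsilon))}$, tending to $0$ as $\epsilon \to 0$ uniformly in $i$. For each fixed $\epsilon$, the second term tends to $0$ in $L^2$ as $i\to\infty$ by the $C^\infty_{\mathrm{loc}}$ convergence $A_i \to A_\infty$ on $\{|x|\geq \epsilon\}$.

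Let $P_i$ be the orthogonal projection $L^2 \to H^2_{D_{A_i}}$, so $1 - P_i = D^+_{A_i} G_i D^-_{A_i}$. Using the Weitzenb\"ock identity $D^-_{A_i} D^+_{A_i} = \nabla^*_{A_i}\nabla_{A_i}$ on positive spinors,
\[
\|D^+_{A_i} G_i \eta\|_{L^2}^2 \;=\; \langle G_i \eta, \eta\rangle_{L^2} \;\leq\; \|G_i \eta\|_{L^2}\,\|\eta\|_{L^2}.
\]
A uniform bound $\|G_i\|_{L^2 \to L^2}\leq C$ independent of $i$ follows by combining Kato's inequality applied to $G_i\eta$ with the Poincar\'e inequality on the fixed ball $B$ for zero boundary data, together with the identity $\|\nabla_{A_i} G_i\eta\|_{L^2}^2 = \langle G_i\eta, \eta\rangle$. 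Hence
\[
\|(1-P_i)(\chi_\epsilon s)\|_{L^2} \;\leq\; C\,\|D^-_{A_i}(\chi_\epsilon s)\|_{L^2}
\]
with $C$ independent of $i$.

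Combining the estimates, $\|P_i(\chi_\epsilon s) - s\|_{L^2} \leq C\|D^-_{A_i}(\chi_\epsilon s)\|_{L^2} + \|\chi_\epsilon s - s\|_{L^2}$. Choosing a sequence $\epsilon_i \to 0$ slowly enough that both error terms vanish in the limit, the sections $s_i := P_i(\chi_{\epsilon_i} s)\in H^2_{D_{A_i}}$ form the desired strongly $L^2$-convergent approximation. The main obstacle is the uniform Green's operator bound: were $\|G_i\|_{L^2\to L^2}$ to blow up as $A_i$ degenerates, the projection error could not be controlled. Kato plus Poincar\'e on a ball of fixed size gives such a bound irrespective of curvature concentration, and this is what makes the whole argument robust at the singularity.
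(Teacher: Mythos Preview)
Your argument is correct and takes a genuinely different route from the paper. The paper decomposes $s$ directly as $s=D_{A_i}\tau_i+\sigma_i$, extracts subsequential smooth limits $(\tau_\infty,\sigma_\infty)$ on annuli, and uses the norm partition $\norm{s}^2=\norm{\nabla_{A_i}\tau_i}^2+\norm{\sigma_i}^2$ together with Fatou's lemma to rule out any loss of mass, forcing $\sigma_i\to\sigma_\infty=s$ strongly. This is a compactness argument in the spirit of the rest of the chapter, and it incidentally proves the slightly stronger statement that the projection $\sigma_i$ of \emph{any} smooth $s\in L^2$ converges strongly.

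Your approach is more constructive: you cut off near the origin, control $D^-_{A_i}(\chi_\epsilon s)$ explicitly, and bound the projection error via the uniform estimate $\norm{G_i}_{L^2\to L^2}\leq C$ obtained from Kato's inequality plus Poincar\'e on a ball of fixed radius. This is cleaner for the specific lemma at hand --- no subsequences, no Fatou, and the approximating sequence $s_i=P_i(\chi_{\epsilon_i}s)$ is explicit with an effective rate. The observation that Kato plus Poincar\'e gives a curvature-independent Green's operator bound is exactly the robust, nonperturbative input needed here, and it parallels the role that the Weitzenb\"ock-based estimates play elsewhere in the chapter. What the paper's approach buys instead is that it sits inside a broader framework (it treats the full $L^2$ decomposition for arbitrary $s$) and feeds more directly into the subsequent lemmas on the comparison map $\pi\mathcal{P}$.
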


\begin{proof}
	As preparation, we study the decomposition of $L^2(E\otimes S_-)=D^+_{A_i}W^{2,1}_0\oplus H^2_{D_{A_i}}$ for varying connections $A_i$. Let $s\in L^2(E\otimes S_-)$ be a smooth section, with unique decomposition
	\begin{equation}\label{L2decompositionfors}
	s=D_{A_i}\tau_i+\sigma_i.
	\end{equation}
	Notice by orthogonality
	\begin{equation}\label{L2decompositionnorm}
	\norm{s}_{L^2}^2=\norm{D_{A_i}\tau_i}_{L^2}^2+\norm{\sigma_i}_{L^2}^2.
	\end{equation}
	Now since $\tau $ has positive spin and zero boundary condition, using Weitzenb\"ock, one has
	\[
	\norm{D_{A_i}\tau}_{L^2}^2=\norm{\nabla_{A_i} \tau_i}_{L^2}^2\sim\norm{ \tau_i}_{W^{2,1}_0}^2.
	\]
	These preliminary remarks establish that the $L^2$ decomposition in this context respects the natural norms.

	By the norm control, on every annulus $0<r\leq |x|\leq R$ we can extract smoothly convergent subsequences for $\tau_i$ and $\sigma_i$. Hence we have smooth convergence on $0<|x|\leq R$ to the limiting data $(\tau_\infty, \sigma_\infty)$, for which the limiting version of (\ref{L2decompositionfors}) holds, and morever $\norm{\tau_\infty}_{W^{2,1}_0}\leq C$, $\norm{\sigma_\infty}_{L^2}\leq C$. Notice the equation
	\[
	D_{A_\infty}^2\tau_\infty=D_{A_\infty} s
	\]
	implies that $\tau_\infty$ extends to a smooth section, so $\sigma_{\infty}$ is also smooth. It is clear that the norm identity (\ref{L2decompositionnorm}) holds in the limit, namely
	\[
	\norm{s}_{L^2}^2=\norm{\nabla_{A_\infty}\tau_\infty}_{L^2}^2+\norm{\sigma_\infty}_{L^2}^2.
	\]
	But Fatou's lemma implies that 
	\[
	\liminf \norm{\tau_i}^2 \geq \norm{\tau_\infty}^2
	, \quad
	\liminf \norm{\sigma_i}^2 \geq \norm{\sigma_\infty}^2
	\]
	So the only possibility is for equalities to be achieved everywhere, \ie
	\[
	\lim \norm{\tau_i}^2 = \norm{\tau_\infty}^2,\quad 
	\lim \norm{\sigma_i}^2 =\norm{\sigma_\infty}^2.
	\]
	The non-collapsing of norms imply that $\sigma_i$ converges strongly to $\sigma_\infty$ in $L^2$. Notice also that since the subsequential limit is unique, in fact the whole sequence has to converge. 
	
	Now by an approximation argument on $s$, it is clear that the smoothness of $s$ is not essential. We specialise to the case $s\in H^2_{D_{A_\infty}}$. Then $s=\sigma_\infty$ and the claim follows.
\end{proof}

\begin{cor}
	If morever $s$ is an eigenstate for $A_\infty$, for which the eigenvalue $\lambda_0^m$ has multiplicity one, then we may assume the sequence consists of eigenstates as well. If the eigenvalue is degenerate, we need to take linear combinations of eigenstates with approximately the same eigenvalue. In particular $\lambda_0^m$ arises as a limit of eigenvalues.
\end{cor}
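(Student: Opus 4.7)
The plan is to strengthen the previous lemma by spectral projection. That lemma provides $\sigma_i \in H^2_{D_{A_i}}$ with $\sigma_i \to s$ strongly in $L^2$; I will project each $\sigma_i$ onto the spectral subspace of $L_i$ corresponding to eigenvalues in a small interval around $\frac{1}{2}(\lambda_0^m)^2$ and show the projection still converges to $s$. The key observation is that $\sigma_i$ is an asymptotic eigenvector (quasimode) of $L_i$.

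To make this precise, extend each $L_i$ to a bounded self-adjoint operator $\tilde L_i = P_i M P_i$ on $L^2(B, E \otimes S_-)$, with $P_i$ the orthogonal projection onto $H^2_{D_{A_i}}$ and $M$ multiplication by $|x|^2/2R^2$. The previous lemma, applied on the dense subspace of smooth sections and extended by the uniform bound $\|P_i\| \leq 1$, yields strong operator convergence $P_i \to P_\infty$, hence $\tilde L_i \to \tilde L_\infty$ strongly. Using $\tilde L_\infty s = \frac{1}{2}(\lambda_0^m)^2 s$, one rewrites
\[
(\tilde L_i - \tfrac{1}{2}(\lambda_0^m)^2)\sigma_i = (\tilde L_i - \tfrac{1}{2}(\lambda_0^m)^2)(\sigma_i - s) + (\tilde L_i - \tilde L_\infty)s,
\]
and both terms tend to zero in $L^2$. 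For small $\epsilon > 0$, let $P_i^\epsilon$ denote the spectral projection of $L_i$ onto eigenvalues in $[\frac{1}{2}(\lambda_0^m - \epsilon)^2, \frac{1}{2}(\lambda_0^m + \epsilon)^2]$; the spectrum of $L_i$ outside this interval lies at distance at least $c(\epsilon) := \lambda_0^m \epsilon - \epsilon^2/2 > 0$ from $\frac{1}{2}(\lambda_0^m)^2$. Using the commutation $[P_i^\epsilon, L_i] = 0$, the quasimode bound gives $\|(1 - P_i^\epsilon)\sigma_i\| \leq c(\epsilon)^{-1} \|(L_i - \tfrac{1}{2}(\lambda_0^m)^2)\sigma_i\| \to 0$, so $P_i^\epsilon \sigma_i \to s$. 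A diagonal extraction over $\epsilon = \epsilon_i \to 0$ yields an approximating sequence of linear combinations of $L_i$-eigenstates with eigenvalues converging to $\frac{1}{2}(\lambda_0^m)^2$. Since these combinations are eventually nonzero, the spectral interval contains some $L_i$-eigenvalue for large $i$, so $\lambda_0^m$ is realized as a limit.

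In the simple-eigenvalue case, refine to a single-eigenstate sequence. Each eigenstate $s_i^k$ contributing to $P_i^{\epsilon_i}\sigma_i$ has norm bounded by $\|s\| + o(1)$ and eigenvalue converging to $\lambda_0^m \in (0, 1)$, so the compactness of Section \ref{Convergenceofeigenstates} applies: any subsequential $L^2$ limit is a $\lambda_0^m$-eigenstate of $L_\infty$, hence a scalar multiple of $s$ by simplicity. Selecting at each $i$ the eigenstate $s_i^{k_i}$ with largest coefficient in $P_i^{\epsilon_i}\sigma_i$ and rescaling by the limiting scalar produces the desired sequence. The main obstacle is the quasimode step, which rests on the strong operator convergence $L_i \to L_\infty$: this is essentially packaged in the previous lemma but needs a standard density argument to extend $P_i \to P_\infty$ from smooth vectors to all of $L^2$. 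Once the quasimode bound is in hand, the spectral-gap argument is routine, and the simple-case refinement is robust enough to handle possible near-degeneracies in the $L_i$-spectrum around $\lambda_0^m$.
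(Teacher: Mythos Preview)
Your argument is correct and takes a genuinely different route from the paper. The paper argues via the ordered structure of the spectrum: it invokes the generalised spectral gap lemma $\lambda_\infty^k \geq \lambda_0^k$ to deduce that the first $m-1$ eigenstates for $A_i$ converge to eigenstates of $A_\infty$ with eigenvalue strictly below $\lambda_0^m$, hence are asymptotically orthogonal to $s$; the remaining $L^2$ mass of $\sigma_i$ is then pinned to the window around $\lambda_0^m$ by comparing $\int |x|^2|\sigma_i|^2$ with $\int |x|^2|s|^2$. Your approach bypasses this induction entirely by a quasimode estimate: you upgrade the previous lemma to strong operator convergence $P_i \to P_\infty$ (which is indeed contained in its proof, extended by density), deduce $(L_i - \tfrac{1}{2}(\lambda_0^m)^2)\sigma_i \to 0$, and apply the standard spectral bound $\|(1-P_i^\epsilon)\sigma_i\| \leq c(\epsilon)^{-1}\|(L_i-\tfrac{1}{2}(\lambda_0^m)^2)\sigma_i\|$.

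Your method is cleaner functional analysis and does not depend on the algorithm or the generalised spectral gap lemma; it would transplant to any setting with strong resolvent convergence of self-adjoint operators. The paper's argument, by contrast, is tied to the specific inductive framework already in place and gives slightly more: it tracks which index $k$ in the ordered spectrum the approximating eigenvalue occupies. In the simple-eigenvalue refinement your reasoning is sound but could be stated more sharply: rather than selecting the largest coefficient, note directly that for fixed small $\epsilon$ (below the $A_\infty$-spectral gap at $\lambda_0^m$) the compactness of Section~\ref{Convergenceofeigenstates} forces any two orthonormal $L_i$-eigenstates in the window to have orthogonal unit-norm limits in the one-dimensional $\lambda_0^m$-eigenspace, a contradiction; hence for large $i$ there is exactly one eigenvalue in the window, and $P_i^\epsilon\sigma_i$ is already a multiple of a single eigenstate.
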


\begin{proof}
	We argue in the nondegenerate case (the degenerate case has only a little more combinatorial complexity). The crucial point is that the spectrum is discrete. There are only finitely many eigenvalues below $\lambda^m_0$:
	\[ \lambda^1_0 \leq \lambda^2_0 \ldots \leq \lambda^{m-1}_0 <\lambda_0^m <\lambda^{m+1}_0 \leq \ldots \]
	Consider the spectral decomposition 
	\[
	\sigma_i=\sum_j a_j s_i^j + (\text{small spectrum contribution}) 
	\]
	By the generalised spectral gap lemma, there are essentially at most $m-1$ eigenvalues $\lambda_i^k$ bounded above by $\lambda_0^{m-1}+\epsilon<\lambda_0^m.$ For these eigenvalues the eigenstates are almost orthogonal to $s$ when $i$ is large, because they converge to eigenstates with lower eigenvalues. So their corresponding Fourier coeffients $a_j$ in the spectral decomposition must go to zero. But once we are not allowed to have contributions from low eigenvalues, then for overall $L^2$ mass reasons, neither are we allowed to have contributions from eigenvalues larger than $\lambda^{m+1}_0-\epsilon> \lambda^m_0$.
\end{proof}


Once we know $\lambda^k_0$ arises as limits of eigenvalues, Proposition \ref{spectralconvergence} follows from the generalised spectral gap Lemma. We leave the reader to ponder the issue of multiplicity. As remarked earlier, the limit of $\lambda_i^k$ turns out a posteriori to be independent of the subsequence.


\subsection{The limit of the Bergmann spaces}

From last section, one has the appealing picture that $H^2_{D_{A_\infty}}$ is the limit of $(H^2_{D_{A_i}})_{large}$, while the small spectrum is lost in the na\"ive smooth limit. 
In this Section we give a more operator theoretic perspective; the main result is Proposition \ref{limitBergmannspace}.

The first step is to study a comparison map between $(H^2_{D_{A_i}})_{large}$ and $H^2_{D_{A_\infty}}$. To save some writing, we will often suppress sequential indices $i$. Recall $D_A$ and $D_{A_\infty}$ induce two decompositions of $L^2$. In particular, the two Bergmann spaces $H^2_{D_A}$ and $H^2_{D_{A_\infty}}$  project to each other, via the operator $\mathcal{P}:H^2_{D_{A_\infty}}   
\rightarrow H^2_{D_{A}}$ and its adjoint $\mathcal{P}^\dagger$. Thus there are canonical maps 
\begin{equation}\label{positivespectrumcomparisonmap}
\pi \mathcal{P} :H^2_{D_{A_\infty}}
\rightarrow H^2_{D_A} \xrightarrow{\text{project} } (H^2_{D_A})_{large}
\end{equation}
and its adjoint
\begin{equation}\label{positivespectrumcomparisonmap2}
(H^2_{D_{A}})_{large} \rightarrow H^2_{D_A} \rightarrow  H^2_{D_{A_\infty}}.
\end{equation}

\begin{lem}
	The canonical map $\pi \mathcal{P}: H^2_{D_{A_\infty}} \rightarrow (H^2_{D_{A_i}})_{large} \subset L^2$  converges to the identity operator $I_{ H^2_{D_{A_\infty}}  }$ in the operator norm, as $i\to \infty$.
\end{lem}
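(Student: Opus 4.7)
My plan is to reduce the operator-norm statement to two quantitative estimates and then control each using the spectral machinery of Chapter \ref{ThespectralproblemChapter} to obtain uniformity over the unit sphere.

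The algebraic reduction: since $\pi\mathcal{P}s \in H^2_{D_{A_i}}$ and $s - \mathcal{P}s \perp H^2_{D_{A_i}}$, one has $\langle \pi\mathcal{P}s, s\rangle = \|\pi\mathcal{P}s\|^2$, so for $\|s\|_{L^2}=1$,
\[
\|\pi\mathcal{P}s - s\|^2 \;=\; 1 - \|\pi\mathcal{P}s\|^2 \;=\; \|s - \mathcal{P}s\|^2 + \|(1-\pi)\mathcal{P}s\|^2.
\]
It therefore suffices to show that both summands tend to zero uniformly over the unit sphere of $H^2_{D_{A_\infty}}$. For the first summand, I would recycle the decomposition $s - \mathcal{P}s = D_{A_i}\tau_i$ from the previous lemma, with $\tau_i \in W^{2,1}_0$ solving $\Lap_{A_i}\tau_i = D_{A_i}s$. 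Because $D_{A_\infty}s = 0$ and $A_i \to A_\infty$ in $C^\infty_{\mathrm{loc}}$ off the origin, the source $D_{A_i}s$ is small in $L^2$ away from $0$, while interior elliptic regularity for $D_{A_\infty}$ pointwise bounds $s$ on any fixed $B(r)$ by a constant depending only on $A_\infty$; both bounds are uniform over the unit sphere.

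For the second summand I would expand $\|(1-\pi)\mathcal{P}s\|^2 = \sum_{k \in \mathrm{small}} |\langle s, s_i^k\rangle|^2$, a sum over at most $N$ terms with $N$ controlled by the quantitative spectral gap of Theorem \ref{Quantitativespectralgap}. Each $s_i^k$ is concentrated in a small ball $B(r)$ by (\ref{concentrationnearorigin}). Decomposing $s = \sum_j c_j \psi^j_\infty$ with respect to the eigenbasis of the Hamiltonian $L$ on $H^2_{D_{A_\infty}}$ (whose spectrum is discrete and accumulates only at $1$, since $L = \tfrac12 + K$ with $K$ compact), I would split $s = s_{\leq N'} + s_{>N'}$: the low-frequency piece lies in a fixed finite-dimensional subspace on which pointwise convergence is automatically uniform; the high-frequency tail concentrates near $\partial B$ by (\ref{concentrationnearboundary}), so its pairing with $s_i^k$ (concentrated at the origin) is uniformly small by Cauchy--Schwarz.

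The main obstacle is precisely this uniformity step. Pointwise, each piece tends to zero by the convergence results already established, but the two subtleties---the concentration of the small-spectrum eigenstates of $A_i$ at the origin and the behaviour of high-frequency eigenstates of $A_\infty$ near $\partial B$---must be simultaneously managed. The decisive observation is that these two families are supported in asymptotically disjoint regions of $\bar B$, so their mutual inner products are uniformly negligible, while the low-frequency eigenstates of $A_\infty$ span a finite-dimensional subspace on which uniformity is automatic.
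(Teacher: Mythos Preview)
Your Pythagorean reduction is correct and matches the paper's implicit use of orthogonality. The issue is in how you control the two summands.

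For the first summand $\|s-\mathcal{P}s\|^2 = \|D_{A_i}\tau_i\|^2$: there is a genuine gap. You observe that the source $D_{A_i}s = (D_{A_i}-D_{A_\infty})s$ is $L^2$-small on any annulus away from the origin, and that $s$ is pointwise bounded near the origin. But these two facts alone do \emph{not} bound $\|D_{A_i}\tau_i\|_{L^2(B)}$: near the origin the connection matrices of $A_i$ may blow up, so $D_{A_i}s$ need not be globally $L^2$-small (or even in $L^2$), and $\tau_i$ is determined by the source on all of $B$, not just the annulus. The paper closes this gap by a different route: it shows via compactness that $D_{A_i}\tau_i$ is $L^2$-small \emph{only on the annulus}, observes $\sigma'$ is also small on the annulus by concentration, hence $s\approx\sigma$ there; since the pointwise bound on $s$ forces $\int_{B\setminus B(\delta R)}|s|^2 \geq 1-C\delta^4$, this gives $\|\sigma\|_{L^2(B)}^2 \geq 1-C\delta^4$, and the partition of norm $1=\|D_A\tau\|^2+\|\sigma\|^2+\|\sigma'\|^2$ then forces \emph{both} remaining terms to be globally small. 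Your outline treats the two summands independently and misses this bootstrap through Pythagoras; without it (or an alternative such as invoking the interior Morrey estimate (\ref{interiorcontrolforlargeeigenvalue}) to control $\|\sigma\|_{L^2(B(r))}$), the first summand is not under control.

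For the second summand, your eigen-decomposition of $s$ relative to the $A_\infty$-Hamiltonian is workable but heavier than needed, and your claim that Theorem \ref{Quantitativespectralgap} bounds the number $N$ of small-spectrum terms is incorrect: that theorem gives the dichotomy on eigenvalues, not a dimension bound (the constancy of $\dim(H^2_{D_{A_t}})_{small}$ is proved only later). A cleaner argument avoids $N$ entirely: since $\sigma'$ is the orthogonal projection of $s$ onto the small spectrum, $\|\sigma'\|^2 = \langle s,\sigma'\rangle$; split the integral at radius $r$, use $|s|\leq C$ on $B(r)$ and the concentration bound $\|\sigma'\|_{L^2(B\setminus B(r))}\leq (\delta_1 R/r)\|\sigma'\|$, giving $\|\sigma'\|\leq Cr^2 + \delta_1 R/r$ directly. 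The paper's partition-of-norm argument delivers this simultaneously with the first summand.
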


\begin{proof}
	Pick any $s\in H^2_{D_{A_\infty}}$, which is normalised to $\norm{s}_{L^2}=1$. We have the decomposition 
	\begin{equation}
	s=D_{A}\tau+\sigma+\sigma'
	\end{equation}
	where $\tau\in W^{2,1}_0$, $\sigma \in (H^2_{D_A})_{large}$ and $\sigma'\in (H^2_{D_A})_{small}$. Here $\sigma$ is the image of the canonical map. Then 
	\begin{equation}\label{partitionofnorm}
	\norm{\nabla_{A} \tau}_{L^2}^2+\norm{\sigma}^2_{L^2}+\norm{\sigma'}_{L^2}^2=\norm{s}_{L^2}^2.
	\end{equation}
	In particular, $\tau$ is bounded in $L^2$. We also have
	\[
	D_A^2\tau=D_A s=(D_A-D_{A_\infty})s,
	\]
	so $D_A^2\tau$ is $L^2$ small away from the origin, whereby $\tau$ is $W^{2,2}_{loc}(B\setminus \{0\})$ bounded. We can therefore assume $\tau$ to converge strongly in $W^{2,1}_{loc}(B\setminus\{0\})$ as $i\to \infty$. The limit $\tau_\infty$ is globally bounded in $L^2$, has zero boundary condition, and satisfies the Laplace equation, so must be zero. Thus in the sequence,  $\tau$ must be $W^{2,1}$ close to $0$ away from the origin, with bounds independent of $s$. In particular, $D_A \tau$ is $L^2$ small away from the origin. 
	
	Notice also 
	\[
	\int_{B}|x|^2R^{-2}|\sigma'|^2<<\int_{B}|\sigma'|^{2}\leq 1.
	\]
	Therefore $s$ is $L^2$ close to $\sigma$ away from the origin. In particular, since the Dirac equation implies $s$ has pointwise bound near the origin, we have for fixed $\delta<<1$,
	\[
	\int_{B(R)\setminus B(\delta R)}|s|^2 \geq (1-C\delta^4) \int_{B(R)} |s|^2,
	\]
	so by the above argument, when $i$ is sufficiently large depending only on $\delta$,
	\[
	\int_{B(R)\setminus B(\delta R)} |\sigma|^2 \geq (1-C\delta^4) \int_{B(R)} |s|^2.
	\]
	Comparing with the partition of norm (\ref{partitionofnorm}), 
	\[
	\norm{D_A\tau}_{L^2} \leq C\delta^2 \norm{s}_{L^2}, \quad \norm{\sigma'}_{L^2}\leq C\delta^2 \norm{s}_{L^2}.
	\]
	so $
	\norm{s-\sigma}_{L^2} \leq C\delta^2 \norm{s}_{L^2}.
	$
	The constants are independent of $s$. Since $\delta$ is arbitrary, this proves the result.
\end{proof}

\begin{lem}
	The canonical comparison map $\pi\mathcal{P}$ for $A_i$ is surjective for $i>>1$.
\end{lem}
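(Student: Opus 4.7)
The plan is to reduce surjectivity of $\pi\mathcal{P}$ to a dual version of the preceding lemma, and then prove the dual by adapting the preceding lemma's argument. First observe that, since $Y := (H^2_{D_{A_i}})_{large} \subset H^2_{D_{A_i}}$, the map $\pi\mathcal{P}: H^2_{D_{A_\infty}} \to Y$ is nothing but the restriction $P_Y|_X$ of the $L^2$-orthogonal projection onto $Y$, where $X := H^2_{D_{A_\infty}}$ and both are viewed as closed subspaces of $L^2(B, E\otimes S_-)$. The preceding lemma therefore reads $\|(I - P_Y)|_X\|_{X\to L^2} \to 0$, which already gives injectivity and closed range of $\pi\mathcal{P}$ for large $i$; surjectivity reduces to showing that the cokernel $\ker (\pi\mathcal{P})^* = \{\sigma \in Y : \sigma \perp_{L^2} X\}$ vanishes.

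Suppose by contradiction there is a sequence $\sigma_i \in Y$ with $\|\sigma_i\|_{L^2}=1$ and $\sigma_i \perp X$. The $A_\infty$-decomposition $L^2 = X \oplus D_{A_\infty}(W^{2,1}_0)$ writes $\sigma_i = D_{A_\infty}\tau_i$ with $\tau_i \in W^{2,1}_0(\tilde E\otimes S_+)$, and the Weitzenböck identity for the ASD connection $A_\infty$ on the extended bundle $\tilde E$ gives $\|\nabla_{A_\infty}\tau_i\|_{L^2} = 1$, so $\{\tau_i\}$ is uniformly bounded in $W^{2,1}_0$. Combining with $D_{A_i}\sigma_i=0$ yields the PDE $\nabla^*_{A_\infty}\nabla_{A_\infty}\tau_i = -(A_i-A_\infty)\cdot\sigma_i$ distributionally on $B$. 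Away from the origin $A_i\to A_\infty$ smoothly, and interior elliptic regularity for the Dirac equation makes $\sigma_i$ uniformly bounded in $C^\infty_{loc}(B\setminus\{0\})$; extracting a subsequence, $\tau_i\to\tau_\infty$ in $C^\infty_{loc}(B\setminus\{0\})$ and weakly in $W^{2,1}_0$, with $\nabla^*\nabla\tau_\infty=0$ on $B\setminus\{0\}$ and zero boundary trace. Removable singularity for harmonic sections across a codimension-four point then forces $\tau_\infty \equiv 0$.

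The main obstacle is to upgrade this local/weak convergence to strong $W^{2,1}$-convergence, which would directly contradict $\|\nabla\tau_i\|_{L^2}=1$. Testing the PDE against $\tau_i$ and integrating by parts (the boundary term vanishes as $\tau_i|_{\partial B}=0$) gives
\[
\|\nabla\tau_i\|_{L^2}^2 = -\int_B \langle\sigma_i,(A_i-A_\infty)\cdot\tau_i\rangle\,d\mathrm{Vol}.
\]
Outside any fixed $B(r)$ the right-hand side vanishes by smooth convergence of $A_i$ together with uniform $L^2$-bounds; the delicate contribution is over $B(r)$, where $A_i-A_\infty$ is not uniformly small. Here three ingredients combine: (i) the intrinsic Morrey-type bound $\int_{B(r)}|\sigma_i|^2 \leq (1/3+O(\delta_2))/(1-r^2/R^2)$ for large-spectrum elements, following from $\langle L_i\sigma_i,\sigma_i\rangle \geq 1/3-O(\delta_2)$; (ii) Hardy's inequality $\int|\tau_i|^2/|x|^2 \leq C\|\nabla\tau_i\|_{L^2}^2$, yielding $\|\tau_i\|_{L^2(B(r))} \lesssim r$; and (iii) a uniform $L^4$-bound on $A_i-A_\infty$ (from Uhlenbeck gauge theory for ASD connections of bounded $L^2$-curvature), together with improved local integrability of $\tau_i$ from elliptic regularity for the Poisson equation whose right-hand side lies in $L^{4/3}$ uniformly. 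A Hölder/Gagliardo–Nirenberg combination then forces the $B(r)$-contribution to zero upon sending $r\to 0$ and then $i\to\infty$, closing the contradiction and yielding surjectivity.
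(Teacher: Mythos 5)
Your setup matches the paper up to the final step: reducing to vanishing of the cokernel, writing a unit-norm cokernel element as $\sigma_i = D_{A_\infty}\tau_i$ with $\tau_i \in W^{2,1}_0$ and $\sigma_i \in (H^2_{D_{A_i}})_{large}$, the Weitzenb\"ock identity giving $\norm{\nabla_{A_\infty}\tau_i}_{L^2}=1$, uniform control on $\tau_i$ away from the origin (the paper phrases this via the elliptic system $D_{A_i}D_{A_\infty}\tau_i=0$ with the Dirichlet condition), and the vanishing of the subsequential limit $\tau_\infty$. The divergence, and the gap, is in how you try to close the contradiction: you aim to prove $\norm{\nabla\tau_i}_{L^2}\to 0$ via $\norm{\nabla\tau_i}^2_{L^2} = -\int_B\langle\sigma_i,(A_i-A_\infty)\cdot\tau_i\rangle$. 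The contribution over $B(r)$ does not become small under your ingredients: $\norm{\sigma_i}_{L^2(B(r))}\leq 1$, $\norm{\tau_i}_{L^4(B(r))}\leq C\norm{\tau_i}_{W^{2,1}_0}\leq C$ with no gain as $r\to 0$ (and the claimed ``improved integrability'' from $W^{2,4/3}$ elliptic regularity still lands only in $L^4$ in four dimensions), together with a hypothetical uniform $L^4$ bound on $A_i-A_\infty$; H\"older then produces a bounded quantity, not a small one. Moreover the uniform $L^4$ control on $A_i-A_\infty$ near the bubbling point is itself not justified, since the curvature of $A_i$ is not small there.

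The paper's finish is simpler and is essentially contained in facts you already stated. Once $\tau_i\to 0$ in $C^\infty$ on every closed annulus $\{r\le|x|\le R\}$ (the Dirichlet condition and smooth convergence of $A_i$ near $\partial B$ give boundary control, so this is genuinely up to $\partial B$), the same holds for $\sigma_i=D_{A_\infty}\tau_i$. Since $\norm{\sigma_i}_{L^2(B)}=1$, the $L^2$ mass of $\sigma_i$ must concentrate at the origin, forcing $\int_B|x|^2R^{-2}|\sigma_i|^2\to 0$. But $\sigma_i$ lies in the large spectrum, so $\int_B|x|^2R^{-2}|\sigma_i|^2\geq C\norm{\sigma_i}^2_{L^2}$ for a uniform $C>0$ --- this is exactly the bound underlying your ingredient (i), just pushed to its full strength. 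This is the contradiction, and no estimate whatsoever on $A_i-A_\infty$ near the origin is needed.
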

\begin{proof}
	Since the above Lemma implies the coercivity of $\pi \mathcal{P}$, the image is closed.
	The cokernel of $\pi\mathcal{P}$ is the kernel of (\ref{positivespectrumcomparisonmap2}), which means $s\in (H^2_{D_A})_{large}$, and $s=D_{A_\infty}\tau$, for $\tau \in W^{2,1}_0$. If the cokernel does not vanish for a subsequence of connections $A_i$, we normalise $s$ to $\norm{s}_{L^2}=1$, and derive a contradiction by a compactness argument as follows.
	
	First, $A_\infty$ is ASD, so $\norm{\nabla_{A_\infty} \tau}_{L^2}=\norm{D_{A_\infty} \tau}_{L^2}=1$, hence $\norm{\tau}_{L^2}$ is controlled. Now $\tau$ satisfies $D_{A_i} D_{A_\infty} \tau=0$, with the zero boundary condition, hence it is controlled to any order away from the origin. Any subsequential limit of $\tau$ must be zero, by the same argument as in last Lemma.
	This implies $\tau$ converges to zero smoothly in the punctured disc; so must $s$, which means the $L^2$ mass of $s$ is concentrated at the origin. But $s$ lives in the large spectrum, so $\int_{B(R)} |x|^2R^{-2}|s|^2 \geq C \norm{s}_{L^2}^2$, contradiction.
\end{proof}

\begin{cor}
	For large $i$, the canonical map $\pi \mathcal{P}$ is an isomorphism. Morever, it is close to being a unitary equivalence:
	\[
	\norm{(\pi\mathcal{P})^\dag- (\pi\mathcal{P})^{-1} }\to 1,\text{ as } i\to \infty.
	\]
\end{cor}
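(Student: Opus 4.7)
The plan is to combine the two preceding lemmas with a standard Banach space open mapping argument for the isomorphism part, and then exploit the operator-norm closeness of $\pi\mathcal{P}$ to the canonical inclusion, together with isometry of adjunction, to control the discrepancy between $(\pi\mathcal{P})^\dagger$ and $(\pi\mathcal{P})^{-1}$.

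First I would invoke the previous lemma to obtain, for $i\gg 1$, the coercivity estimate $\norm{\pi\mathcal{P} s - s}_{L^2} \leq \epsilon_i \norm{s}_{L^2}$ with $\epsilon_i\to 0$ (where both sides are computed in the ambient $L^2(B, E\otimes S_-)$, identifying $H^2_{D_{A_\infty}}$ and $(H^2_{D_{A_i}})_{large}$ with their inclusions). This gives $\norm{\pi\mathcal{P} s}_{L^2} \geq (1-\epsilon_i)\norm{s}_{L^2}$, so $\pi\mathcal{P}$ is injective with closed range. Combined with the surjectivity lemma, $\pi\mathcal{P}\colon H^2_{D_{A_\infty}} \to (H^2_{D_{A_i}})_{large}$ is a bijective bounded linear map between Hilbert spaces, hence an isomorphism by the bounded inverse theorem, with $\norm{(\pi\mathcal{P})^{-1}}\leq (1-\epsilon_i)^{-1}$.

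For the near-unitarity, the key observation is that taking adjoints is isometric with respect to the operator norm. Viewing both $H^2_{D_{A_\infty}}$ and $(H^2_{D_{A_i}})_{large}$ as subspaces of $L^2$, the first lemma says $\pi\mathcal{P}$ differs from the ambient inclusion by an operator of norm $\epsilon_i$; dualising, $(\pi\mathcal{P})^\dagger$ differs from the orthogonal projection onto $H^2_{D_{A_\infty}}$ (restricted to $(H^2_{D_{A_i}})_{large}$) by the same amount. A direct computation then gives
\[
(\pi\mathcal{P})(\pi\mathcal{P})^\dagger - I_{(H^2_{D_{A_i}})_{large}} = O(\epsilon_i)
\]
in operator norm. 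From the algebraic identity
\[
(\pi\mathcal{P})^\dagger - (\pi\mathcal{P})^{-1} = (\pi\mathcal{P})^{-1}\bigl[(\pi\mathcal{P})(\pi\mathcal{P})^\dagger - I\bigr],
\]
together with the uniform bound $\norm{(\pi\mathcal{P})^{-1}}\leq (1-\epsilon_i)^{-1}\to 1$, one obtains the desired control $\norm{(\pi\mathcal{P})^\dagger - (\pi\mathcal{P})^{-1}} = O(\epsilon_i)\to 0$ (so that $\pi\mathcal{P}$ approaches a unitary equivalence; I read the ``$\to 1$'' in the statement as a typo for ``$\to 0$'', consistent with the phrase ``close to being a unitary equivalence'').

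I expect the only subtle point to be the interpretation of ``closeness to the identity'' across different Hilbert subspaces; the previous lemma already sets this up by embedding everything in $L^2$, after which the argument reduces to a routine Hilbert space computation. No further PDE input is needed beyond what is already established.
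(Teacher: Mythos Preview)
Your proposal is correct and is precisely the routine Hilbert space argument the paper leaves to the reader (the corollary is stated without proof, as an immediate consequence of the two preceding lemmas). Your reading of ``$\to 1$'' as a slip for ``$\to 0$'' is also the only sensible interpretation, consistent with the phrase ``close to being a unitary equivalence''.
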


We next study the small spectrum $H^2_{D_{A}}$. Here it is more convenient to set up the problem in terms of a \textbf{one-parameter family} ${(A_t)}_{t>0}$ of ASD connections converging smoothly away from the origin. This is more suited to continuity arguments.

\begin{prop}
	For large $t$ so that the curvature is sufficiently concentrated, the \textbf{dimension of the small spectrum} is constant in the family.
\end{prop}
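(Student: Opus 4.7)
The plan is to show the dimension is locally constant on the admissible range of $t$ (where Theorem \ref{Quantitativespectralgap} applies) and then invoke connectedness of the parameter interval. The philosophy is standard eigenvalue perturbation theory for self-adjoint operators with a spectral gap, adapted to the technical nuisance that the underlying Hilbert space $H^2_{D_{A_t}}$ itself varies with $t$.

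First, I would fix an admissible $t_0$ and choose a value $\mu_\ast$ inside the gap interval $\bigl(\tfrac{1}{2}\delta_1^2,\, \tfrac{1}{2}(\sqrt{2/3}-\delta_2)^2\bigr)$ given by Theorem \ref{Quantitativespectralgap}, so that for every $t$ in some neighborhood of $t_0$ (indeed, for every $t$ in the whole admissible range),
\[
\dim (H^2_{D_{A_t}})_{small} \;=\; \#\{\text{eigenvalues of } L_t \text{ strictly below } \mu_\ast\},
\]
with no eigenvalues in a definite window around $\mu_\ast$. Thus it suffices to show this count is locally constant in $t$.

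Next, I would exploit the fact that each finite $A_t$ is a smooth connection and the family $A_t$ varies smoothly near $t_0$ (the singularity only forms in the limit $t\to\infty$). Standard elliptic perturbation theory then gives norm continuity of the Green's operator $G_{A_t}$ for the Dirichlet Laplacian on $E$, and consequently of the orthogonal projection $P_{A_t} = 1 - D^+_{A_t} G_{A_t} D^-_{A_t}$ acting on the ambient Hilbert space $L^2(B, E\otimes S_-)$. Since the Hamiltonian can be written on this ambient space as $L_t = P_{A_t} \circ M \circ P_{A_t}$, where $M$ is the fixed bounded multiplication operator by $|x|^2/(2R^2)$, the family $t \mapsto L_t$ is norm continuous. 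The uniform gap around $\mu_\ast$ lets me pick a closed contour $\gamma$ enclosing $[0,\mu_\ast]$ lying in the common resolvent set of all $L_t$ for $t$ near $t_0$, and form the Riesz spectral projector
\[
\Pi_t^{small} \;=\; \frac{1}{2\pi i}\oint_\gamma (\zeta - L_t)^{-1}\, d\zeta ,
\]
which varies in operator norm with $t$. Norm-continuous families of projections have locally constant rank, and this rank equals $\dim (H^2_{D_{A_t}})_{small}$. Local constancy combined with connectedness of the admissible interval $(T_0, \infty)$ then yields the claim.

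The principal technical obstacle is the norm-continuity of $P_{A_t}$ on $L^2$, since the Dirac operators $D^{\pm}_{A_t}$ on either side are unbounded. The standard route is the resolvent identity $G_{A_t} - G_{A_{t_0}} = G_{A_t}(\Delta_{A_{t_0}} - \Delta_{A_t})G_{A_{t_0}}$, combined with uniform $W^{-1,2}\to W^{1,2}_0$ bounds on $G_{A_t}$ near $t_0$; the difference of coupled Laplacians is a first-order operator whose coefficients tend to zero in every $C^k$ norm near $t_0$, because $A_{t_0}$ is smooth and the perturbation is smooth. Once this continuity is in hand, the perturbation-theoretic conclusion is routine.
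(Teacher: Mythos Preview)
Your approach is the same as the paper's: continuity of the spectral data combined with the gap from Theorem~\ref{Quantitativespectralgap} and connectedness of the parameter interval. The paper compresses this into one sentence (``by general functional analysis, the eigenvalues flow continuously''), while you have supplied a concrete mechanism via norm-continuity of $t\mapsto P_{A_t}$ and Riesz projectors.

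There is one slip in your ambient-space formulation. The extension $L_t = P_{A_t}\, M\, P_{A_t}$ on $L^2(B, E\otimes S_-)$ annihilates the entire orthogonal complement $D^+_{A_t} W^{2,1}_0$, so $0$ is an eigenvalue of infinite multiplicity there. Your Riesz projector for a contour enclosing $[0,\mu_\ast]$ therefore has infinite rank, not $\dim (H^2_{D_{A_t}})_{small}$, and the rank-constancy argument does not yield the conclusion as written. The fix is immediate: replace $L_t$ by $\tilde L_t = P_{A_t}\, M\, P_{A_t} + (1 - P_{A_t})$, which is still norm-continuous in $t$, coincides with $L_t$ on $H^2_{D_{A_t}}$, and sends the complement to the eigenvalue $1 > \mu_\ast$. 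With this modification your contour argument goes through verbatim.
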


\begin{proof}
	Imagine $t$ to flow from a large number to $\infty$. By general functional analysis, the eigenvalues flow continuously. But spectral gap (\cf Theorem \ref{Quantitativespectralgap}) prevents the spectral flow between the large spectrum and the small spectrum, so the dimension of the small spectrum must be a constant finite number.
\end{proof}

\begin{rmk}
	The concept of small spectrum is only meaningful in the large $t$ limit.
\end{rmk}

By examining the comparison map $\pi \mathcal{P}$ in the limit $t\to \infty$, we see

\begin{lem}
	For large $t$, the operator $\mathcal{P}: H^2_{D_{A_\infty}} \rightarrow H^2_{D_{A_t}}$ is Fredholm, with index being the negative of the dimension of the small spectrum.
\end{lem}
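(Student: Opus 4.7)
The plan is to deduce the Fredholm property and compute the index by exploiting the orthogonal decomposition $H^2_{D_{A_t}} = (H^2_{D_{A_t}})_{large} \oplus (H^2_{D_{A_t}})_{small}$ together with the Corollary already established, which says that the composition $\pi\mathcal{P}:H^2_{D_{A_\infty}}\to (H^2_{D_{A_t}})_{large}$ is an isomorphism (indeed, close to a unitary equivalence) for $t$ sufficiently large. Here $\pi$ denotes the orthogonal projection from $H^2_{D_{A_t}}$ onto its large spectrum subspace, and $\pi^\perp = 1-\pi$ projects onto the small spectrum.

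First I would write $\mathcal{P} = \pi\mathcal{P} + \pi^\perp\mathcal{P}$ and observe that injectivity of $\mathcal{P}$ is immediate: if $\mathcal{P}(s) = 0$ then $\pi\mathcal{P}(s) = 0$, and since $\pi\mathcal{P}$ is an isomorphism this forces $s=0$. Hence $\ker \mathcal{P} = 0$. Next I would identify the image of $\mathcal{P}$ as the graph of the bounded operator
\[
T = \pi^\perp \mathcal{P} \circ (\pi\mathcal{P})^{-1}: (H^2_{D_{A_t}})_{large}\longrightarrow (H^2_{D_{A_t}})_{small},
\]
namely $\mathrm{Image}(\mathcal{P}) = \{v + Tv : v \in (H^2_{D_{A_t}})_{large}\}$. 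Since $(H^2_{D_{A_t}})_{small}$ is finite-dimensional by the previous Proposition, $T$ is of finite rank and in particular bounded; consequently its graph is a closed subspace of $H^2_{D_{A_t}}$.

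For the cokernel, I would write down the explicit isomorphism
\[
H^2_{D_{A_t}}\big/\mathrm{Image}(\mathcal{P}) \;\xrightarrow{\;\sim\;}\; (H^2_{D_{A_t}})_{small}, \qquad [v+w]\mapsto w - Tv,
\]
where $v\in (H^2_{D_{A_t}})_{large}$ and $w\in (H^2_{D_{A_t}})_{small}$. One checks it vanishes precisely on the graph and that every element of $(H^2_{D_{A_t}})_{small}$ is hit. Therefore $\dim \mathrm{coker}(\mathcal{P}) = \dim (H^2_{D_{A_t}})_{small}$, which is exactly $\dim V$. Combining, $\mathcal{P}$ is Fredholm with
\[
\mathrm{index}(\mathcal{P}) = \dim\ker\mathcal{P} - \dim\mathrm{coker}\,\mathcal{P} = 0 - \dim V = -\dim V,
\]
as claimed.

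I do not expect any serious obstacle here: the analytic content — the coercivity and surjectivity of $\pi\mathcal{P}$, and the finite-dimensionality of the small spectrum — has already been discharged earlier in the chapter. The only point requiring a little care is ensuring that the image of $\mathcal{P}$ really is closed, but this is automatic once one recognises it as a graph of a bounded finite-rank operator between Hilbert spaces. Thus the lemma is essentially a direct bookkeeping consequence of the comparison-map analysis already completed.
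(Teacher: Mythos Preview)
Your argument is correct and is precisely the elaboration the paper has in mind: the paper itself offers no detailed proof, only the one-line remark ``By examining the comparison map $\pi\mathcal{P}$ in the limit $t\to\infty$, we see\ldots'', and your graph-of-a-finite-rank-operator computation is exactly how one cashes this out using the already-established isomorphism $\pi\mathcal{P}$ and the finite-dimensionality of the small spectrum.
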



\begin{lem}
	The Bergmann spaces $H^2_{D_{A_t}}$ fit into a Hilbert bundle over the half line $0<t<\infty$. 
\end{lem}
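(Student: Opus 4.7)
The plan is to construct local trivialisations of the family $\{H^2_{D_{A_t}}\}_{t>0}$ using the $L^2$ orthogonal projection as a comparison map, in the same spirit as $\pi \mathcal{P}$ was used in the previous lemmas. Fix $t_0\in (0,\infty)$. Since $A_{t_0}$ is a smooth ASD connection on $\bar B$, the Bergmann space $H^2_{D_{A_{t_0}}}$ is a well-defined closed subspace of $L^2(B,E\otimes S_-)$. For each $t$ near $t_0$, define
\[
\pi_{t_0,t}: H^2_{D_{A_{t_0}}}\hookrightarrow L^2(B,E\otimes S_-)\xrightarrow{P_t} H^2_{D_{A_t}},
\]
where $P_t=1-D^+_{A_t}G_t D^-_{A_t}$ is the orthogonal projector onto $H^2_{D_{A_t}}$, with $G_t=(\nabla^*_{A_t}\nabla_{A_t})^{-1}$ the Dirichlet Green's operator.

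The first step is to verify that $\pi_{t_0,t}\to \mathrm{Id}_{H^2_{D_{A_{t_0}}}}$ in operator norm as $t\to t_0$. Because we stay strictly away from $t=\infty$, the family $A_t$ consists of smooth connections on $\bar B$ varying smoothly in $t$, so $D_{A_t}$ depends continuously on $t$ as a bounded operator $W^{2,1}_0\to L^2$ (and its formal adjoint $L^2\to W^{2,-1}$). Standard elliptic perturbation then gives continuity of $G_t: W^{2,-1}\to W^{2,1}_0$ in the operator norm, whence $P_t$ varies continuously in $t$ as a bounded operator on $L^2$. Restricting to the closed subspace $H^2_{D_{A_{t_0}}}$ and noting $P_{t_0}$ acts there as the identity, we conclude $\|\pi_{t_0,t}-\mathrm{Id}\|\to 0$.

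The second step is to upgrade this to an isomorphism. Once $\|\pi_{t_0,t}-\mathrm{Id}\|<1$, the map $\pi_{t_0,t}$ is coercive with closed image, hence injective; and the same operator-norm continuity argument applied to the adjoint $\pi_{t_0,t}^\dagger: H^2_{D_{A_t}}\to H^2_{D_{A_{t_0}}}$ (which equals $P_{t_0}$ restricted to $H^2_{D_{A_t}}$) shows $\pi_{t_0,t}^\dagger$ is likewise close to identity, forcing surjectivity. Thus for all $t$ in a neighbourhood $U_{t_0}$ of $t_0$ the map $\pi_{t_0,t}$ is a Hilbert space isomorphism, and we obtain a trivialisation
\[
\Phi_{t_0}: U_{t_0}\times H^2_{D_{A_{t_0}}}\longrightarrow \bigsqcup_{t\in U_{t_0}} H^2_{D_{A_t}},\qquad (t,s)\mapsto \pi_{t_0,t}(s).
\]

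Finally, to check that these charts assemble into a Hilbert bundle, one verifies that the transition maps $\Phi_{t_1}^{-1}\circ \Phi_{t_0}$ on overlaps $U_{t_0}\cap U_{t_1}$ are continuous in $t$ with values in the bounded invertible operators on a fixed model Hilbert space. This is immediate from the same continuity of $P_t$: the transition is $\pi_{t_1,t}^{-1}\circ\pi_{t_0,t}$, a composition of operator-norm continuous maps, invertible by the previous step. The main technical point is really the uniform continuity of $P_t$ in $t$, which I expect to be the most delicate piece since it requires controlling $G_t$ in the presence of boundary conditions; this is however a routine elliptic estimate once the smooth family structure of $A_t$ on $\bar B$ for $t$ in a compact interval of $(0,\infty)$ is used. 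Note that this argument genuinely breaks down at $t=\infty$, where curvature concentration invalidates uniform control of $G_t$ near the origin — which is precisely why the fibre at infinity must be enlarged by $V$ as in the convergence theorem.
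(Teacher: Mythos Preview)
Your proof is correct and follows the same essential idea as the paper: use the orthogonal projection $H^2_{D_{A_{t_0}}}\to H^2_{D_{A_t}}$ inside $L^2$ as a local trivialisation. The paper's own proof is a single sentence which simply cites ``the argument proving $\pi\mathcal{P}$ to be an isomorphism,'' i.e.\ it imports the machinery developed for the singular limit $t\to\infty$. Your route differs in that you argue directly via operator-norm continuity of $t\mapsto P_t$ from the smooth dependence of $A_t$, $D_{A_t}$, and $G_t$ on $t$ for $t$ bounded away from infinity; this is more elementary and more to the point here, since the compactness/contradiction arguments used for $\pi\mathcal{P}$ were designed to cope with curvature concentration and are unnecessary in the present smooth regime. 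One small remark: your phrase ``$\pi_{t_0,t}^\dagger$ is close to identity'' is slightly loose since the adjoint maps between different subspaces; what you need (and what your continuity of $P_t$ immediately gives) is that both composites $\pi_{t_0,t}^\dagger\pi_{t_0,t}$ and $\pi_{t_0,t}\pi_{t_0,t}^\dagger$ are close to the identity on their respective domains.
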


\begin{proof}
	Consider $t$ near $t_0$, and the natural projection operator $H^2_{D_{A_t}}\rightarrow H^2_{D_{A_{t_0}}}$. This is an isomorphism by the argument proving $\pi\mathcal{P}$ to be an isomorphism.
\end{proof}

\begin{lem}
	For large $t<\infty$, the subspaces $(H^2_{D_{A_t}})_{small}$ fit together into a vector bundle.
\end{lem}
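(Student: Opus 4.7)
The plan is to build the local trivialisations by means of a Riesz-type spectral projection. By the previous Lemma, for $t$ near $t_0$, the Bergmann spaces $H^2_{D_{A_t}}$ are identified with the fixed Hilbert space $H^2_{D_{A_{t_0}}}$ via the canonical projection isomorphism. Under this identification, the harmonic oscillator operator $L_t = P_{0,t}\circ \frac{|x|^2}{2R^2}$ becomes a family of bounded self-adjoint operators on the single Hilbert space $H^2_{D_{A_{t_0}}}$, and the small spectrum subspace $(H^2_{D_{A_t}})_{small}$ corresponds to its low-eigenvalue subspace.

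First I would verify that the transported operator depends continuously on $t$ in operator norm. This amounts to checking that the projection $P_{0,t}$ varies continuously in $t$, which follows from the formula $P_{0,t} = 1 - D_{A_t}^+ G_{A_t} D_{A_t}^-$ together with the smooth convergence of $A_t$ away from the origin and the control on $\tau_i$ established in the proof of the strong approximation Lemma (the non-collapsing of $L^2$ mass shown by the same compactness argument implies norm continuity of the orthogonal projections).

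Next, invoke the quantitative spectral gap Theorem \ref{Quantitativespectralgap}: for $t$ sufficiently large, the spectrum of $L_t$ is contained in $[0,\delta_1^2/2) \cup ((\sqrt{2/3}-\delta_2)^2/2, 1/2]$. Pick a fixed Jordan contour $\Gamma\subset\C$ enclosing $[0,\delta_1^2/2]$ but avoiding the large spectrum interval. Because the spectrum of $L_t$ is disjoint from $\Gamma$ uniformly in $t$ near $t_0$, the resolvent $(L_t-\lambda)^{-1}$ is uniformly bounded on $\Gamma$ and depends continuously on $t$. Thus the spectral projection
\[
\Pi_t = \frac{1}{2\pi i}\oint_\Gamma (L_t-\lambda)^{-1}\,d\lambda
\]
is a norm-continuous family of orthogonal projections onto $(H^2_{D_{A_t}})_{small}$. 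Since the rank of a continuous family of projections is locally constant, and since the dimension of the small spectrum is already known to be constant by the previous Proposition, $\Pi_t$ has constant finite rank. A local frame for the subbundle is then obtained by applying $\Pi_t$ to any basis of $(H^2_{D_{A_{t_0}}})_{small}$.

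The main obstacle I anticipate is the continuity of $L_t$ in operator norm, since the underlying Bergmann spaces are different subspaces of the common $L^2$ ambient space and the operator $|x|^2$ is unbounded at the boundary only in a mild sense; however, since $|x|^2/R^2$ is a bounded multiplication operator, the issue reduces purely to continuity of $P_{0,t}$, which is controlled by the resolvent estimates for the coupled Laplacian on $B$. Once this technical point is in hand, the rest of the argument is a standard application of Kato's perturbation theory for spectral projections.
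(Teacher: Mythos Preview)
Your approach is correct and is essentially the same as the paper's: the paper simply invokes the general fact (citing the Appendix of Kodaira) that for a smooth family of operators with discrete spectrum, the spectral projection onto the eigenvalues lying in a fixed domain $\Omega\subset\C$ varies smoothly as long as no eigenvalue crosses $\partial\Omega$, which is exactly the Riesz projection argument you have spelled out. One minor remark: your concern about the continuity of $P_{0,t}$ via the $t\to\infty$ convergence machinery is unnecessary here, since for $t$ near a finite $t_0$ the connections $A_t$ are smooth on all of $\bar{B}$ and vary smoothly in $t$, so norm continuity of $L_t$ follows from standard elliptic theory.
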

\begin{proof}
	As a general fact, consider a family of operators depending smoothly on a parameter $t$, with discrete spectrum consisting of eigenvalues. If we fix a spectral domain $\Omega\subset \C$, and consider the projection operator to the the span of the eigenspaces for all the eigenvalues inside $\Omega$, then as long as no eigenvalue crosses the boundary of $\Omega$, the projection operator depends smoothly on $t$, and the image has constant dimension. (\cf Appendix of \cite{Kodaira}).
\end{proof}

\begin{thm}
	(Natural limit of Bergmann spaces)\label{limitBergmannspace}
	Let $V$ be an inner product space with the dimension of the small spectrum. Then there is a natural topological bundle over $0<t\leq \infty$, whose fibres over $0<t<\infty$ agree with the Bergmann space $H^2_{D_{A_t}}$, and the fibre over $\infty$ is $H^2_{D_{A_\infty}}\oplus V$.
\end{thm}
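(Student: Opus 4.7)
The plan is to extend the Hilbert bundle $\{H^2_{D_{A_t}}\}_{0<t<\infty}$ across $t=\infty$ by exploiting the orthogonal splitting into large and small spectrum established above, trivialising each summand separately near infinity. Fix $T$ large enough that Theorem \ref{Quantitativespectralgap} applies uniformly on $[T,\infty)$; the spectral projections $P_t^{small}$ and $P_t^{large}$ are then well defined and continuous in $t$ by the standard contour-integral formula for spectral projections (the gap prevents eigenvalues from crossing between the two regimes).

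For the small summand, the preceding Lemma exhibits $(H^2_{D_{A_t}})_{small}$ as a finite-rank vector subbundle of the trivial $L^2$ bundle over $[T,\infty)$. Since the base is contractible, this bundle is trivialisable; I would pick any continuous trivialisation and let $V$ denote the common fibre, then extend tautologically to $t=\infty$ as the trivial bundle with fibre $V$.

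For the large summand, use the comparison map $\pi\mathcal{P}_t : H^2_{D_{A_\infty}} \to (H^2_{D_{A_t}})_{large}$, which by the previous Corollary is an isomorphism for $t\geq T$ and satisfies $\pi\mathcal{P}_t \to I$ in operator norm. Apply polar decomposition to produce a genuine unitary
\[
U_t = \pi\mathcal{P}_t \, \big((\pi\mathcal{P}_t)^\dagger \pi\mathcal{P}_t\big)^{-1/2},
\]
which depends continuously on $t$ and still converges to $I$ in operator norm. This gives a continuous trivialisation of the large-spectrum subbundle by the trivial Hilbert bundle with fibre $H^2_{D_{A_\infty}}$, extended by the identity at $t=\infty$.

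Taking direct sums of the two trivialisations produces the desired topological bundle over $[T,\infty]$, with fibre $H^2_{D_{A_\infty}}\oplus V$ at $\infty$. Over $(0,T]$ one glues with the Hilbert bundle structure already built in the earlier Lemma, continuity at $t=T$ being automatic from the continuity of $P^{small}_t$, $P^{large}_t$ and $\pi\mathcal{P}_t$ in $t$. The main technical point is the polar decomposition step: one must check that $(\pi\mathcal{P}_t)^\dagger \pi\mathcal{P}_t$ stays uniformly bounded below, so that its inverse square root is well defined and continuous, but this is immediate from $\norm{\pi\mathcal{P}_t - I}\to 0$ via the continuous functional calculus.
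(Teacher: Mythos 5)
Your proposal follows essentially the same route as the paper: split into large and small spectrum via the spectral gap, trivialise the large part near $t=\infty$ using the comparison map $\pi\mathcal{P}_t$, and trivialise the small part as a finite-rank bundle over a contractible one-dimensional base, then extend to $\infty$ by $H^2_{D_{A_\infty}}\oplus V$. The only refinement beyond the paper's sketch is the polar-decomposition step making the large-part trivialisation genuinely unitary (the paper uses $\pi\mathcal{P}_t$ directly and records only that it is ``close to'' a unitary), and for the small part the paper singles out the canonical $L^2$-induced connection and parallel transport where you invoke contractibility of the base; both choices yield the same topological bundle.
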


\begin{proof}
	It suffices to assign a trivialisation near $\infty$. One can use the isomorphism provided by the canonical comparison map (\ref{positivespectrumcomparisonmap}) to deal with the large spectrum part. To trivialise the small spectrum part, one notices there is a canonical connection on the bundle $\bigsqcup_{\infty>t>t_0>>1} (H^2_{D_{A_t}})_{small}$ coming from the embedding into $L^2$. Over the one dimensional base $(t_0, \infty)$, this connection gives a parallelisation of this finite rank bundle compatible with the Hermitian structure, which can be extended to the $\infty$ fibre by formally adding a copy of $V$.   
\end{proof}

\subsection{Natural operators on the Bergmann space}

In the setup of Theorem \ref{limitBergmannspace}, we can ask whether natural operators on $H^2_{D_{A_t}}$ extend continuously to operators on the limit Bergmann space $H^2_{D_{A_\infty}}\oplus V$. Here we use the trivialisation near $\infty$ in Theorem \ref{limitBergmannspace} to regard $H^2_{D_{A_t}}$ as a fixed Hilbert space, and the convergence of operators is reduced to the usual definition of norm convergence.

Given a smooth function $f$ on $\bar{B}$, the \textbf{Toeplitz operator} $\hat{f}= P_A \circ f$ on $H^2_{D_A}$ means multiplying by the function $f$, composed with the orthogonal projection of $L^2$ to $H^2_{D_A}$. Physically, the Bergmann space is the state space of a fermion, and then these correspond to quantum observables of the shape $\langle s|f|s'\rangle$. The Toeplitz operators are Hermitian.

\begin{prop}\label{limitToeplitzoperator}
	(Limit of Toeplitz type operators)
	In the setup of Theorem \ref{limitBergmannspace}, if $f(0)=0$, then the Toeplitz operator $\hat{f}$ on the Bergmann spaces $H^2_{D_{A_t}}$ converge strongly to the operator
	\begin{equation}
	H^2_{D_{A_\infty}}\oplus V \xrightarrow{\hat{f}\oplus 0 } H^2_{D_{A_\infty}}\oplus V
	\end{equation}
	on the limit Bergmann space, where  $\hat{f}$ also denotes the corresponding Toeplitz operator on $H^2_{D_{A_\infty}}$. 
\end{prop}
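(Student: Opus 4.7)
The plan is to work inside the trivialisation of Theorem \ref{limitBergmannspace}: for $t$ large, $H^2_{D_{A_t}}$ is identified with $H^2_{D_{A_\infty}} \oplus V$ via the canonical comparison map $\pi \mathcal{P}_t$ on the large factor and the parallel-transport unitary $\Psi_t : V \to (H^2_{D_{A_t}})_{\mathrm{small}}$ on the small factor. Under this identification, $\hat{f}_t = P_{A_t} \circ f$ pulls back to a $2\times 2$ block operator on $H^2_{D_{A_\infty}} \oplus V$, and the task is to show three of the blocks tend strongly to zero while the LL block tends strongly to $\hat{f}_\infty$.

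The driving estimate is a decay bound for multiplication by $f$ on the small spectrum. Since $f \in C^\infty(\bar{B})$ with $f(0)=0$, one has $|f(x)| \leq C|x|$. For any unit-norm $\sigma \in (H^2_{D_{A_t}})_{\mathrm{small}}$, the defining inequality $\int_B |x|^2 R^{-2}|\sigma|^2 \leq \delta_1(t)^2$ together with $\delta_1(t) \to 0$ from Theorem \ref{Quantitativespectralgap} yields $\|f\sigma\|_{L^2} \leq C R \delta_1(t) \to 0$. Composing with $\|P_{A_t}\| \leq 1$ immediately kills the LS and SS blocks. For the SL block, write $\Pi_S^{(t)} s = \sum_j \langle s, \phi_j^t \rangle \phi_j^t$ for an orthonormal basis of the small spectrum; the concentration estimate (\ref{concentrationnearorigin}) combined with $\delta_1(t) \to 0$ forces $\phi_j^t \rightharpoonup 0$ weakly in $L^2$, and since $\dim V$ is finite this upgrades to strong operator convergence $\Pi_S^{(t)} \to 0$ on $L^2$. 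Applied to $s = f \pi \mathcal{P}_t u$ for $u \in H^2_{D_{A_\infty}}$, this shows the SL block vanishes in the strong topology.

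For the LL block, the key input is the strong convergence $P_{A_t} \to P_{A_\infty}$ on $L^2$, which follows from the $L^2$-decomposition lemma in Section \ref{Convergenceofeigenstates} applied to smooth sections and then extended by density using the uniform bound $\|P_{A_t}\| \leq 1$. Combined with the operator norm convergence $\pi \mathcal{P}_t \to I$ established in the preceding subsection, for $u \in H^2_{D_{A_\infty}}$ one has $f \pi \mathcal{P}_t u \to fu$ strongly, hence
\[
\Pi_L^{(t)}\bigl(f \pi \mathcal{P}_t u\bigr) = \bigl(P_{A_t} - \Pi_S^{(t)}\bigr)\bigl(f \pi \mathcal{P}_t u\bigr) \longrightarrow P_{A_\infty}(fu) = \hat{f}_\infty u
\]
in $L^2$, and composing with the uniformly bounded $(\pi \mathcal{P}_t)^{-1}$, itself converging to the identity, delivers the strong convergence of the LL block to $\hat{f}_\infty$. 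The main technical subtlety is the strong $L^2$ convergence $P_{A_t} \to P_{A_\infty}$; the rest consists of formal manipulations built on the small-spectrum decay estimate and the operator norm convergence of the comparison map.
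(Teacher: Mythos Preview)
Your argument is correct and follows the same overall strategy as the paper: use the trivialisation, kill the small-spectrum contribution via the decay estimate $\|f\sigma\|_{L^2}^2 \leq C\int |x|^2|\sigma|^2 \ll \|\sigma\|^2$, and handle the large spectrum via $\pi\mathcal{P}_t \to I$. Two remarks on the differences. First, your treatment of the SL block via weak convergence $\phi_j^t \rightharpoonup 0$ is correct but unnecessarily indirect: since $f$ is real the Toeplitz operator $\hat{f}_t$ is Hermitian, so the SL block is the adjoint of the LS block and inherits the same operator-norm decay; this is the route the paper implicitly takes. Second, your identification of the strong convergence $P_{A_t} \to P_{A_\infty}$ on $L^2$ as the crux of the LL block is well taken---the paper's one-line ``follows from the norm convergence $\pi\mathcal{P} \to 1$'' is elliptical and does implicitly rely on this, which (as you note) is extracted from the proof of the lemma on strong approximation of $H^2_{D_{A_\infty}}$ elements. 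Your reference for that lemma should point to the subsection on convergence in the large spectrum rather than Section~\ref{Convergenceofeigenstates}.
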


\begin{proof}
	We use the trivialisation near $\infty$ described in the proof of \ref{limitBergmannspace}. The limit operator is zero on the $V$ factor, because on the small spectrum for $A_t$,
	\[
	\norm{ \widehat{f} s}^2_{L^2} \leq \int_{B(R)} f^2|s|^2 \leq C\int_{B(R)} |x|^2|s|^2<< \norm{s}^2_{L^2}.
	\]
	Hence we only need to consider the large spectrum for $A_t$. Then the result follows from the norm convergence $\pi \mathcal{P} \to 1$ as $t\to\infty$.
\end{proof}

\begin{rmk}
It is unclear to the author how to extend the matrix describing Clifford multiplications $\langle s|c_jc_k|s'\rangle$ for $s,s'\in (H^2_{D_{A_t}})_{small}$ to the limit.
This question is intimately tied to compactification of ASD moduli spaces. It is interesting to extend the local Nahm transform of Chapter \ref{ThelocalNahmtransform}, \ref{Theinverseconstruction} to singularities.  On the ASD side, the Uhlenbeck compactification involves
ideal instantons. On the operator theory side, the ideal instanton corresponds to $(H^2_{D_{A_\infty}}\oplus V, \hat{x}_\mu)$, where $H^2_{D_{A_\infty}}$ encodes the smooth limit $A_\infty$ and $\dim V$ encodes the delta mass of curvature. But if we enlarge the operator algebra to include secondary operators induced by $\langle s|c_jc_k|s'\rangle$ and ask how they converge, then we may obtain a more refined compactification.
\end{rmk}

Next we consider the Green operator $G_A: H^2_{D_A}\subset L^2 \to W^{2,1}_0\subset L^2$,  defined by solving the Laplace equation $\Lap_A s=0$ with zero boundary condition. For a point $x$ inside the annulus $0<r<|x|<r'<R$, elliptic regularity gives the pointwise estimate
\[
|G_A s(x)|\leq C\norm{G_A s}_{L^2}.
\]
Thus the evaluation map
\[
ev_x\circ G_A: H^2_{D_A} \to E_{x}\otimes S_-, \quad s\mapsto G_A s(x)
\]
is a bounded $E_x\otimes S_-$ valued linear functional.

Now we vary the connection, so the Green's operator depends on the parameter $t$, and we study its limit.

\begin{prop} (limit Green operator)\label{limitGreenoperator}
	On the annulus $0<r<|x|<r'<R$, as $t\to \infty$, the evaluation maps for the Green operators converge strongly and uniformly in $x$ to some $E_x\otimes S_-$ valued bounded linear functional on the limit Bergmann space $H^2_{D_{A_\infty}}\oplus V$. The limit functional vanishes on $V$ and agrees with $ev_x\circ G_{A_\infty}$ on the $H^2_{D_{A_\infty}}$ factor.
\end{prop}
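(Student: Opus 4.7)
The plan is to use the trivialisation of $H^2_{D_{A_t}}$ as $H^2_{D_{A_\infty}}\oplus V$ near $t=\infty$ from Theorem \ref{limitBergmannspace} and prove strong convergence of the evaluation functional on each summand separately. In both cases I set $\tau_t = G_{A_t} s_t \in W^{2,1}_0$, so that $\Lap_{A_t}\tau_t = s_t$. Integration by parts combined with Kato's inequality $|\nabla|\tau_t||\le|\nabla_{A_t}\tau_t|$ and the scalar Poincar\'e inequality with zero boundary gives a uniform bound $\norm{\tau_t}_{W^{2,1}}\le C\norm{s_t}_{L^2}$, independent of $t$. On any compact subset of the punctured ball $B\setminus\{0\}$ the connections $A_t$ converge smoothly to $A_\infty$, so interior elliptic regularity for $\Lap_{A_t}$ yields uniform $C^k$ bounds on $\tau_t$ in terms of the $L^\infty$ norm of $s_t$ on a slightly larger set. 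Subsequential smooth limits $\tau_\infty$ on $B\setminus\{0\}$ are thereby available.

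For the $V$ factor, the first step is to show that the small spectrum eigenvalues $\lambda_t$ must tend to $0$ as $t\to\infty$. If along a subsequence $\lambda_t\to\lambda_\infty\in(0,\delta_1]$, the two-sided bound $0<\lambda_\infty/2<\lambda_t<\delta_1<1$ triggers the eigenstate convergence theory of Section \ref{Convergenceofeigenstates}, producing a nonzero eigenstate of $L_{A_\infty}$ with eigenvalue $\lambda_\infty^2/2$; but by Theorem \ref{Quantitativespectralgap} applied to $A_\infty$ (which has small $L^2$ curvature), all its eigenvalues exceed $\sqrt{2/3}-\delta_2 > \delta_1$, a contradiction. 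Given $\lambda_t\to 0$, the concentration estimate (\ref{concentrationnearorigin}) forces the $L^2$ mass of $s_t$ outside any fixed neighbourhood of the origin to vanish, and interior Dirac regularity promotes this to $\norm{s_t}_{L^\infty(\{|x|\ge\rho\})}\to 0$. The subsequential limit $\tau_\infty$ therefore solves $\Lap_{A_\infty}\tau_\infty = 0$ on $B\setminus\{0\}$ and lies in $W^{2,1}_0(B)$; since an isolated point has vanishing $H^1$-capacity in $\R^4$, the origin is a removable singularity for such solutions, and uniqueness of the Dirichlet problem forces $\tau_\infty\equiv 0$. The smooth local convergence on the annulus then delivers uniform convergence of $\tau_t$ to zero there.

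For the $H^2_{D_{A_\infty}}$ factor, the sections $s_t = \pi\mathcal{P}(s_\infty)\in (H^2_{D_{A_t}})_{\text{large}}$ satisfy $s_t\to s_\infty$ in $L^2$ by the operator norm convergence $\pi\mathcal{P}\to I$ established earlier in this chapter. The same compactness recipe produces a subsequential limit $\tau_\infty\in W^{2,1}_0(B)$, smooth on $B\setminus\{0\}$ and satisfying $\Lap_{A_\infty}\tau_\infty = s_\infty$ there. Since $s_\infty$ extends smoothly across the origin as a section of $\tilde{E}$, while $\tau_\infty$ has global $W^{2,1}$ control, the equation extends globally by removable singularity, and uniqueness of the Dirichlet problem for $\Lap_{A_\infty}$ identifies $\tau_\infty = G_{A_\infty}s_\infty$. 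Smooth convergence on the compact annulus supplies uniformity in $x$, and uniqueness of the limit upgrades subsequential to full convergence.

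The main obstacle is the \emph{global} identification of $\tau_\infty$, because $\Lap_{A_t}$ and $\Lap_{A_\infty}$ act on genuinely different topological bundles $E$ and $\tilde{E}$, and the strong convergence $A_t\to A_\infty$ takes place only on $B\setminus\{0\}$; the compactness argument alone only identifies $\tau_\infty$ away from the origin. The resolution is to combine the global $W^{2,1}_0$ bound (which survives weak limits and places $\tau_\infty$ in the natural Dirichlet domain of $\Lap_{A_\infty}$) with the removable-singularity property of $W^{2,1}$ solutions at an isolated point in four dimensions, thereby promoting the local identification on $B\setminus\{0\}$ to a global one, which closes both halves of the proof.
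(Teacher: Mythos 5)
Your proposal is structurally sound and reaches the right conclusion, but it takes a noticeably different route from the paper's own proof on both halves, and there is one misapplied citation worth flagging.

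For the $V$ (small spectrum) factor, the paper avoids any removable-singularity argument: it writes the $L^2$ decomposition $x_\mu s = D_A\tau_\mu + \sigma_\mu$, observes that $\tau_\mu = c_\mu G_A s$, and concludes directly that
\[
\norm{G_A s}_{L^2}^2 \;\leq\; C\,\norm{\nabla (G_A s)}_{L^2}^2 \;=\; C\,\norm{D\tau_\mu}_{L^2}^2 \;\leq\; C\,\norm{x_\mu s}_{L^2}^2 \;\ll\; \norm{s}_{L^2}^2,
\]
which, combined with the already-established pointwise bound $|G_A s(x)| \leq C\norm{G_A s}_{L^2}$ on the annulus, immediately forces the functional to collapse on the small spectrum. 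Your route through $\lambda_t\to 0$, the concentration estimate, interior $L^\infty$ decay, and removal of the point singularity reaches the same conclusion but needs several more moving parts. Note also that your citation of Theorem \ref{Quantitativespectralgap} to rule out small eigenvalues for $A_\infty$ is slightly off: that theorem only gives the dichotomy $\lambda<\delta_1$ or $\lambda>\sqrt{2/3}-\delta_2$, which does not by itself preclude the small branch. What you actually need is the Spectral Gap Corollary ($\lambda_\infty=0$ or $\lambda_\infty\geq\lambda_0$) together with the flat-case Lemma computing $\lambda_0=\sqrt{2/3}$ and a continuity argument for small-curvature $A_\infty$; that is the input that makes $\lambda_0>\delta_1$.

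For the $H^2_{D_{A_\infty}}$ factor, the paper runs a contradiction argument with a \emph{varying} counterexample sequence $s_i\in H^2_{D_{A_\infty}}$ (normalised), extracting weak $L^2$ subsequential limits and identifying the limits of $G_{A_{t_i}}s_i'$ and $G_{A_\infty}s_i$ via the weak limiting equation. This gives norm convergence of the functionals directly. Your version fixes a single $s_\infty$ and then appeals to ``uniqueness of the limit'' to upgrade subsequential to full convergence — but that only delivers pointwise (strong operator topology) convergence in $s$. To match the paper's conclusion you must additionally take a supremum over $\norm{s_\infty}_{L^2}=1$, which quietly reverts to the paper's counterexample-sequence scheme; worth making explicit. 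On the other side of the ledger, your explicit appeal to the vanishing $H^1$-capacity of a point in $\R^4$ to justify the global identification $\tau_\infty = G_{A_\infty}s_\infty$ is a genuine refinement — the paper glosses over this with the phrase ``using the weak equation for the limit'' and relies on the reader to supply the removable-singularity step.
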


\begin{proof}
	We first consider any $s\in (H^2_{D_{A_t}})_{small}$. Consider the $L^2$ decomposition
	\[
	x_\mu s=D_A \tau_\mu+\sigma_\mu,
	\]
	where $\tau \in W^{2,1}_0, \sigma_\mu \in H^2_D$.
	Then 
	\[
	D_A^2\tau_\mu=D_A(x_\mu s)=c_\mu s,
	\]
	so $\tau_\mu=c_\mu (Gs)$. Hence
	\begin{equation*}
	C\norm{G_A s}_{L^2}^2\leq \norm{\nabla (G_A s)}_{L^2}^2=\norm{\nabla\tau_\mu}^2_{L^2}=\norm{D\tau_\mu}^2_{L^2}\leq \norm{x_\mu s}^2_{L^2}<< \norm{s}_{L^2}^2,
	\end{equation*}
	where $C$ is an absolute constant. Thus the norm of the evaluation functional on the small spectrum converges to zero.

	On the large spectrum, we use a compactness argument. Take some $L^2$ normalised counterexample sequence $s_i \in H^2_{D_{A_{\infty}}}$, and let $s_i'=\mathcal{P}_i s_i \in H^2_{D_{A_{t_i}}}$ be their corresponding elements via the trivialisation. We have points $x_i$ inside the annulus, such that $|G_{A_{t_i}} s_i'(x_i)-G_{A_\infty}s_i(x_i)|\geq \epsilon$. We will freely pass to subsequences.
	
	Due to uniform $W^{2,1}_0$ bounds, we can extract a weak limit for $G_{A_{t_i}} s_i'$. By elliptic estimates in the annulus region, we can assume the convergence to be uniform in $0<r<|x|<r'<R$, and $s_i$ converges weakly to $s_\infty\in H^2_{D_{A_\infty}}$. Since $\mathcal{P}_i \to 1$, the sequence $s_i'$ has the same limit $s_\infty$. Then using the weak equation for the limit, one sees the weak limit of $G_{A_{t_i}} s_i'$ must be $G_{A_\infty}s_\infty$. The same discussions apply to $G_{A_\infty} s_i$. Thus the difference $ G_{A_{t_i}} s_i'-G_{A_\infty}s_i$ has weak limit zero, the convergence is uniform in the annulus, but $|G_{A_{t_i}} s_i'(x_i)-G_{A_\infty}s_i(x_i)|\geq \epsilon$, contradiction.
\end{proof}

\begin{rmk}
	These linear functionals can be Riesz represented as elements of $H^2_{D_A}\otimes (E_x\otimes S_-)$, which also converge strongly as $t\to \infty$. 
\end{rmk}

\section{Index theory and Chern numbers}

\subsection{The rank of $\hat{\hat{E}}$ and index computation}\label{Therankofdoubletransformandindexcomputation}

We prove Lemma \ref{indexproblem}, which amounts to computing the index of an operator. It is convenient to work in the ADHM formulation, which links more easily to operator theory. The discussions below are self contained, although the main results are likely to be known in the literature of Toeplitz operators and index theory.

The index problem fits into a more general picture.
Consider the operator
\[
T=\sum_i \hat{f_i}\hat{c}_i: H^2_D\otimes S_-\rightarrow H^2_D\otimes S_+,
\]
where $\hat{c}_i$ is the Clifford multiplication on the spin factor, and the Toeplitz operator $\hat{f_i}=P_0 \circ f_i$ is the projection of the multiplication by a smooth real valued function $f_i$ on $\bar{B}$, $i=1,2,3,4$. 
We put on the assumption that the quaternion valued function $f_T=f_1+i f_2+j f_3+k f_4$, which we think of as the \textbf{symbol} of the operator, does not vanish anywhere on $\partial B$. In the generality of this Section, the connection $A$ needs not to be ASD.

\begin{lem}
	The operator $T$ is Fredholm.
\end{lem}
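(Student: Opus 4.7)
The plan is to prove Fredholmness by constructing a parametrix $T'$—a two-sided inverse modulo compact operators—in the spirit of Toeplitz operator theory. The argument rests on two algebraic observations about Toeplitz operators $\hat f = P_0 \circ f$ on $H^2_D$. First, if $f \in C^\infty(\bar B)$ is compactly supported in the interior $B$, then $\hat f$ is compact: interior elliptic regularity for $D_A^- s = 0$ gives uniform $W^{2,k}$ control of any $s \in H^2_D$ on $\operatorname{supp}(f)$ in terms of $\norm{s}_{L^2}$, so $L^2$-bounded sequences $s_n \in H^2_D$ yield $L^2$-precompact sequences $fs_n$ by Rellich--Kondrachov, and $P_0$ is bounded. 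Second, for smooth $f, g$ on $\bar B$ the ``commutator''
\[
\hat f \hat g - \widehat{fg} \;=\; P_0 \circ f \circ (I - P_0) \circ g
\]
is compact on $H^2_D$: using $I - P_0 = D_A^+ G_0 D_A^-$ together with $D_A^- s = 0$ for $s \in H^2_D$, one rewrites $(I - P_0)(gs) = D_A^+ G_0 c(dg)s$, which factors as $H^2_D \hookrightarrow L^2 \xrightarrow{c(dg)} L^2 \xrightarrow{G_0} W^{2,2}_0 \xrightarrow{D_A^+} W^{2,1} \hookrightarrow L^2$, and the final Rellich embedding is compact.

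With these lemmas in hand I construct the parametrix as follows. Under the Clifford identification, $c(v)^* c(v) = |v|^2 I_{S_-}$ for $v \in \R^4$, so setting $g_T := f_T/|f_T|^2$ in a neighbourhood of $\partial B$ (where $f_T$ is nonzero by hypothesis) gives $c^*(g_T) c(f_T) = I_{S_-}$ and $c(f_T) c^*(g_T) = I_{S_+}$ on that neighbourhood. Extend the four components $g_\mu$ smoothly but otherwise arbitrarily to all of $\bar B$, and define
\[
T' \;=\; \sum_\mu \hat g_\mu \, \hat c_\mu^* \;:\; H^2_D \otimes S_+ \longrightarrow H^2_D \otimes S_-.
\]
Iterating the symbol-calculus lemma (tensored with the identity on spinor indices) gives
\[
T' T \;\equiv\; \sum_{\mu,\nu} \widehat{g_\mu f_\nu}\, c_\mu^* c_\nu \;=\; P_0 \circ \bigl(c^*(g_T) c(f_T)\bigr) \pmod{\text{compact}}.
\]
By construction, $c^*(g_T) c(f_T) \in \End(S_-)$ equals $I_{S_-}$ in a neighbourhood of $\partial B$, so $c^*(g_T) c(f_T) - I_{S_-}$ is a smooth endomorphism with compact support in $B$. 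The first lemma then produces $T'T - I_{H^2_D \otimes S_-}$ compact, and the analogous computation using $c(f_T) c^*(g_T) = I_{S_+}$ near $\partial B$ gives $T T' - I_{H^2_D \otimes S_+}$ compact. Hence $T$ is Fredholm.

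The step I expect to be the main obstacle is the symbol-calculus lemma. The inclusion $H^2_D \hookrightarrow L^2$ is not compact, so the compactness of $\hat f \hat g - \widehat{fg}$ must come entirely from the structure of $I - P_0$. The crucial trick is to invoke the Dirac equation: replacing $gs$ by the commutator $[D_A^-, g]s = c(dg)s$ effectively turns multiplication by $g$ into a first-order operator, and the two-order smoothing by $G_0$ then yields a net one-order Sobolev gain, which Rellich converts to compactness. Once this is in place the remainder of the proof follows the classical ``Toeplitz symbol implies Fredholm'' template: the hypothesis that $f_T$ is nonvanishing on $\partial B$ is used only to invert the symbol near the boundary, and the arbitrary interior modification of $g_T$ is absorbed into the compact remainder by the first lemma.
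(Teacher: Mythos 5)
Your proof is correct but takes a genuinely different route from the paper's. The paper proves Fredholmness directly: it writes $f_is = D\tau_i + \sigma_i$ for $s \in \ker T$, uses the equation $D^2\tau_i = \sum_j(c_j\partial_jf_i)s$ to get $W^{2,2}_0$ control on $\tau_i$, feeds this back into $\sum f_i\hat c_i s = \sum \hat c_i D\tau_i + Ts$ together with the invertibility of the symbol matrix near $\partial B$ to bootstrap a uniform $W^{2,1}(B)$ bound on $s \in \ker T$ from $\norm{s}_{L^2}$, and then runs separate compactness arguments for the cokernel and for closed range. You instead build an explicit parametrix $T'$ and reduce everything to two Toeplitz-algebra lemmas: compactness of $\hat f$ for interior-supported $f$ (which the paper does state, but only later, for the index-depends-only-on-boundary-value observation), and compactness of $\hat f\hat g - \widehat{fg}$. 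The analytic core is the same in both proofs — exploiting the two-derivative gain of $G_0$ against the one-derivative loss of $D^+_A$, and Rellich — but you package it once, into the commutator lemma, rather than rerunning it three times. The parametrix route is slicker and structurally closer to the classical Toeplitz index theory the paper alludes to; the paper's route is more self-contained and feeds more directly into the subsequent kernel computation $\sum x_\mu\hat c_\mu s = D\tau$. One small caveat: your identity $I - P_0 = D^+_A G_0 D^-_A$ relies on $\ker D^+_A\big|_{W^{2,1}_0}$ being trivial, which is automatic when $A$ is ASD (Weitzenb\"ock) but needs the modification the paper flags (``impose $\tau_i \in (\ker D)^\perp$'') in the general case the section claims to cover.
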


\begin{proof}
	The operator is clearly bounded. We show the finite dimensionality of the kernel. First write down the $L^2$ decomposition
	\[
	f_i s=D \tau_i +\sigma_i,
	\]
	where $s\in H^2_D\otimes S_-$, $\tau_i \in W^{2,1}_0 \otimes S_-$, and $D \sigma_i=0$. Thus using $D s=0$,
	\[
	D^2\tau_i =\sum_j(c_j \partial_j f_i) s.
	\]
	This means by elliptic regularity, $\tau_i$ can be chosen to satisfy 
	\[
	\norm{\tau_i}_{W^{2,2}_0} \leq C \norm{s}_{L^2},
	\]
	hence 
	\[
	\norm{D \tau_i}_{W^{2,1} } \leq C \norm{s}_{L^2}.
	\]
	As a remark, for general connections $A$, we need to impose $\tau_i\in (\ker D)^\perp$ to make $\tau_i$ satisfy the required estimates.
	Combining the above,
	\begin{equation}\label{Fredholmargumentbootstrap}
	\sum f_i \hat{c_i}s= \sum \hat{c}_i D \tau_i +\sum \hat{c}_i \sigma_i=\sum \hat{c}_i D \tau_i+ Ts.
	\end{equation}
	The nonvanishing of the matrix $\sum f_i\hat{c_i}$ near the boundary is equivalent to its invertibility.
	Thus in a small neighbourhood of the boundary $B_\delta=\{x\in B: \text{dist}(x, \partial B)<\delta \}$, if $Ts=0$, then $ \norm{s}_{W^{2,1}(B_\delta)} \leq C \norm{s}_{L^2}$. 
	But we also have $\norm{s}_{W^{2,1}(B\setminus B_\delta)} \leq C \norm{s}_{L^2}$  by the interior regularity of the Dirac equation, so $\norm{s}_{W^{2,1}(B)} \leq C \norm{s}_{L^2}$, which forces the kernel to be finite dimensional by compactness.
	
	A completely symmetric argument proves the finite dimensionality of the cokernel.
	
	We also need $T$ to have closed range. It is enough to prove that for $s\in (\ker D)^\perp$, we have $\norm{s}_{L^2} \leq C\norm{Ts}_{L^2}$.
	This follows from a compactness argument similar to the above. The key is to invoke (\ref{Fredholmargumentbootstrap}), and suppose for contradiction take a sequence of $s$ to converge smoothly in the interior, and $\tau_i$ to converge strongly in $W^{2,1}_0$, such that $Ts$ converges strongly to zero.
\end{proof}

Our next aim is to describe the index of this operator. We proceed by a sequence of observations:
\begin{enumerate}
	\item The index depends only on the boundary value of $f_i$.
	
	This is because if the boundary value is zero, then $T$ is a compact operator, by the interior regularity of Dirac fields. \\

	\item The map $f_T=f_1+if_2+jf_3+kf_4: \partial B \rightarrow \mathbb{H}\setminus \{0\}$ defines a degree (recall we assume $B$ is homeomorphic to a ball), which classifies its homotopic type. Homotopic Fredholm operators define the same index. Hence the index depends on $f$ only through its degree. \\

	\item For the zero degree case, we consider the special operator $T=\hat{c}_1$, which clearly gives an isomorphism, so the index is zero. \\

	\item Behaviour under multiplication.
	
	Notice $index(T)=index(\hat{c}_1T)$, but $\hat{c}_1T$ is an endomorphism of $H^2_D\otimes S_-$, so can be composed. Observe $1, \hat{c}_1\hat{c}_2, \hat{c}_1\hat{c}_3,\hat{c}_1\hat{c}_4$ is a standard quaternionic basis. Morever, for the multiplication operators $\hat{g}$ and $\hat{g'}$ acting on the $H^2_D$ factor, where $g$ and $g'$ are real valued functions on $\bar{B}$, the composite $\hat{g}\hat{g'}$ agrees with $\hat{gg'}$ up to a compact correction. Combining these, the quaternionic multiplication of the symbol functions and the composition of the operators are related as follows:
	\[
	{f_T\cdot f_{T'} }=-f_{\hat{c}_1^{-1}(\hat{c}_1 T')(\hat{c}_1T)}
	\]
	But the degree is additive with respect to symbol multiplication, and the index is additive with respect to composition, so 
	\begin{lem}
		For a fixed connection $A$, the index is proportional to the degree.
	\end{lem}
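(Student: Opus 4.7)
The plan is to promote the four observations listed above the lemma into a direct proof that the index is a group homomorphism $I:\Z\to\Z$ from the degree to the index, and then invoke the fact that any group homomorphism $\Z\to\Z$ is multiplication by a constant. Define $I(d)=\text{ind}(T)$ for any operator $T=\sum_i \hat{f}_i\hat{c}_i$ whose symbol $f_T$ has degree $d$ on $\partial B$; this is well-defined by observations (1) and (2), which together show $\text{ind}(T)$ depends only on the homotopy class of $f_T|_{\partial B}$ in $\pi_3(\mathbb{H}\setminus\{0\})=\Z$.

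The next step is to verify that $I$ is additive. Given two operators $T,T'$, form the composite $\hat{c}_1^{-1}(\hat{c}_1 T')(\hat{c}_1 T):H^2_D\otimes S_-\to H^2_D\otimes S_+$ as in observation (4). Since $\hat{c}_1$ is a Clifford isomorphism (hence of index $0$), and Fredholm indices add under composition, this composite has index $\text{ind}(T)+\text{ind}(T')$. Its symbol is $-f_T\cdot f_{T'}$, whose degree equals $\deg(f_T)+\deg(f_{T'})$: the unit sphere in $\mathbb{H}\setminus\{0\}$ is $SU(2)=S^3$, and its H-space structure induces exactly the group law on $\pi_3(S^3)=\Z$ (the extra sign is irrelevant, since left multiplication by $-1$ on $\mathbb{H}\setminus\{0\}$ is homotopic to the identity). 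Thus $I(d+d')=I(d)+I(d')$.

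Combined with $I(0)=0$ from observation (3), $I$ is a group homomorphism $\Z\to\Z$, so $I(d)=kd$ for the integer constant $k=I(1)$, which proves the lemma. The main point requiring some care is the additivity of the quaternionic degree under multiplication; while this is standard, one must be careful that the minus sign and the $\hat{c}_1$-conjugations do not contribute, which is straightforward since both are homotopic (through invertible symbols, respectively through Fredholm operators) to the identity. No new analytic input is needed beyond the Fredholm property already established; the remaining work is purely formal book-keeping with observations (1)--(4).
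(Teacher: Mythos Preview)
Your proposal is correct and follows essentially the same approach as the paper: the paper's proof \emph{is} precisely the concatenation of observations (1)--(4), concluding from additivity of degree under symbol multiplication and additivity of index under composition that the index is a homomorphism $\Z\to\Z$ in the degree. Your write-up merely makes explicit a couple of points the paper leaves implicit---that the extra minus sign is harmless because left multiplication by $-1$ on $S^3$ is homotopic to the identity, and that the $\hat c_1$-factors contribute zero index---but the argument is the same.
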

	\item Independence of connection $A$.
	
	We wish to remove the dependence on the background connection $A$. For this, introduce the Bergmann space $H^2_{\bar{D}}$ corresponding to a trivial flat connection $\bar{A}$ on the original vector bundle $E$, which exists because $B$ is contractible. This induces another orthogonal decomposition of $L^2$. The two Bergmann spaces project onto each other, via operators $\mathcal{P}: H^2_D\rightarrow H^2_{\bar{D}}$ and its adjoint $\mathcal{P}^\dag$. Another viewpoint is that $H^2_D$ and $H^2_{\bar{D}}$ have a natural $L^2$ pairing, so induce two linear operators $\mathcal{P}$ and $\mathcal{P}^\dag$. It is clear that $\mathcal{P}$ and $\mathcal{P}^\dag$ are Fredholm; in fact, inside $L^2$ they are compact perturbations of the identity operator. For example, 
	\[
	\mathcal{P}=1-D^+_{\bar{A} }G_{\bar{A} } D^-_{\bar{A} }= 1-D^+_{\bar{A} }G_{\bar{A} } (D^-_{\bar{A} } - D^-_{{A} }  ),
	\]
	where $(D^-_{\bar{A} } - D^-_{{A} }  )$ is bounded on $L^2$, so
	$
	D^+_{\bar{A} }G_{\bar{A} } (D^-_{\bar{A} } - D^-_{{A} }  ): L^2\to L^2
	$
	is compact.
	
	We can then consider the composition
	\[
	H^2_{\bar{D}}\otimes S_-\xrightarrow{\mathcal{P}^\dag} H^2_{D}\otimes S_-\xrightarrow{T} H^2_{D}\otimes S_+ \xrightarrow{\mathcal{P}} H^2_{\bar{D}} \otimes S_+.
	\]
	This has the same index as $T$, because the index of $\mathcal{P}$ cancels with that of its adjoint. The composite operator is a compact perturbation of \[
	\sum \hat{f_i}\hat{c_i}: H^2_{\bar{D}}\otimes S_-\rightarrow H^2_{\bar{D}}\otimes S_+.\]
	This shows the index is the same as that of the corresponding problem in the flat case.\\
	\item In the case of the trivial connection, it is clear that $H^2_D$ decomposes according to the rank of the vector bundle, so the index is proportional to the rank of $E$.
\end{enumerate}

Now we treat the standard case of the trivial flat line bundle, with $T=\sum\hat{x}_\mu\hat{c}_\mu$. We assume without loss of generality that the origin is an interior point of $B$, so the symbol function $f_T$ is non-vanishing on $\partial B$, and morever $f_T: \partial B\to \mathbb{H}\setminus 0$ has degree 1.

The cokernel of $T$ vanishes, thanks to our non-singularity discussion (\cf Corollary \ref{cokernelvanishingonNahmtransform}). The condition for the kernel is 
\begin{equation}\label{kernelofToeplitz}
\sum x_\mu \hat{c}_\mu s=D\tau,
\end{equation}
where $s\in H^2_D\otimes S_-$ and $\tau\in W^{2,1}_0(E\otimes S_+)\otimes S_+$, and $E$ is the trivial flat line bundle. We observe
\[
\Delta \tau=D^2\tau=\sum c_\mu \hat{c}_\mu s.
\]
The significance of this comes from representation theory. Notice 
\[
S_-\otimes S_-\xrightarrow{\sum c_\mu\hat{c}_\mu}S_+\otimes S_+
\]
is a $Spin(4)-$equivariant map. We can decompose the representations
\[
S_-\otimes S_-=\Lambda^2 S_- \oplus \text{ (3D rep of $su(3)_-$) }, S_+\otimes S_+=\Lambda^2 S_+ \oplus \text{ (3D rep of $su(3)_+$) }.
\]
By Schur's lemma, the map factors through the one dimensional representation $\Lambda^2 S_-$, and the image lands inside the line spanned by $\eta'_1\otimes \epsilon(\eta'_1)+\eta'_2\otimes \epsilon(\eta'_2) \in \Lambda^2 S_+\subset S_+\otimes S_+$, where $\eta_1',\eta_2'$ form an orthonormal basis of $S_+$. Thus $\sum c_\mu \hat{c}_\mu s$ is a scalar function times $\eta'_1\otimes \epsilon(\eta'_1)+\eta'_2\otimes \epsilon(\eta'_2) \in S_+\otimes S_+$; therefore there is some scalar function $\rho$, such that
\begin{equation}
\tau=\rho (\eta'_1\otimes \epsilon(\eta'_1)+\eta'_2\otimes \epsilon(\eta'_2) ).
\end{equation}
This means
\[
\sum c_\mu \hat{c}_\mu s=\Delta\rho (\eta'_1\otimes \epsilon(\eta'_1)+\eta'_2\otimes \epsilon(\eta'_2) ).
\]
On the other hand, direct differentiation shows
\[
D\tau=\nabla \rho \cdot \eta_1'\otimes \epsilon(\eta_1')+\nabla \rho \cdot \eta_2'\otimes \epsilon(\eta_2'),
\]
hence using the defining equation (\ref{kernelofToeplitz}),
\[
s=-\frac{1}{|x|^2} \{   \nabla \rho \cdot \eta_1'\otimes x\cdot \epsilon(\eta_1')+\nabla \rho \cdot \eta_2'\otimes x \cdot \epsilon(\eta_2')             \},
\]
whereby 
\[
\sum c_\mu\hat{c}_\mu s=-\frac{4}{|x|^2} \langle \nabla \rho, x \rangle (\eta'_1\otimes \epsilon(\eta'_1)+\eta'_2\otimes \epsilon(\eta'_2) ).
\]
Comparing the above, $\Delta \rho=-\frac{4}{|x|^2} \langle \nabla \rho, x \rangle $, or
\[
\Delta (\frac{\rho}{|x|^2})=0.
\]
Recall that $\rho$ is a smooth function with zero boundary condition, so $\frac{\rho}{|x|^2}$ has zero boundary condition, and the only possible singularity is a pole of order 2 at the origin, which by our assumption is an interior point of $B$. It has to be proportional to the Dirichlet Green's function with delta mass placed at the origin. This proves the kernel dimension is 1, so the index is 1 in the special case.

\begin{eg}
	Consider the special case where $B=B(R)\subset \R^4$. Up to a constant, \[
	\rho=\frac{1}{2}(|x|^2-R^2), s=-(\eta_1\otimes \epsilon(\eta_1)+\eta_2\otimes \epsilon(\eta_2) ),
	\]
	where $\eta_1, \eta_2$ form an orthonormal basis of $S_-$. 
\end{eg}

To summarise the results of this Section,
\begin{prop}
	The \textbf{index} of $T$ equals $ \text{rank}(E)\deg(f_T)$.
\end{prop}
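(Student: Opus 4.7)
The proposition is essentially a synthesis of the six observations and the explicit base-case computation that the author has already carried out in the section. My plan is therefore to collate these ingredients into a single clean argument, rather than introduce new analysis.

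First, I would invoke observations (1) and (2) to reduce the index to a function of the homotopy class of the symbol $f_T\colon \partial B\to \mathbb{H}\setminus\{0\}$. Since $B$ is homeomorphic to a $4$-ball and $\mathbb{H}\setminus\{0\}\simeq S^3$, this homotopy class is encoded by the integer $\deg(f_T)$. Then observation (4), combined with the zero-degree case (3), shows that for a fixed rank-$r$ bundle $E$ with any smooth connection $A$ the assignment $[f_T]\mapsto \mathrm{index}(T)$ is a group homomorphism $\mathbb{Z}\to\mathbb{Z}$, hence of the form $\mathrm{index}(T) = c_{E,A}\,\deg(f_T)$ for some integer $c_{E,A}$ depending only on $(E,A)$.

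Next I would use observation (5), which deforms the connection $A$ to the trivial flat connection $\bar{A}$ on $E$ (available because $B$ is contractible) by means of the Fredholm intertwiners $\mathcal{P}$, $\mathcal{P}^\dagger$, whose indices cancel. This shows $c_{E,A} = c_{E,\bar{A}}$, so the constant depends only on the rank of $E$. Then observation (6) notes that in the trivial case $H^2_{\bar D}$ splits as a direct sum of $r = \mathrm{rank}(E)$ copies of the scalar Bergmann space, and the operator $T$ respects this splitting; hence $c_{E,\bar A} = r\, c$, where $c$ is the universal constant for the trivial flat \emph{line} bundle.

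Finally, I would fix $c$ via the explicit computation already performed: for the trivial line bundle and $T = \sum \hat{x}_\mu \hat{c}_\mu$, whose symbol $f_T(x)=x_1+ix_2+jx_3+kx_4$ has degree $1$, the cokernel vanishes (Corollary~\ref{cokernelvanishingonNahmtransform}) and the representation-theoretic reduction together with the analysis of $\Delta(\rho/|x|^2)=0$ pins the kernel down to the one-dimensional span generated by the Dirichlet Green's function at the origin. Thus $c = 1$, giving $\mathrm{index}(T) = \mathrm{rank}(E)\deg(f_T)$. The only substantive step is this last explicit calculation, which has already been executed in the excerpt; the rest of the proof is a bookkeeping assembly of the preceding six observations, with the main conceptual point being the reduction of an a priori analytic invariant to a topological degree times a normalisation constant determined by a single model case.
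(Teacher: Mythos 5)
Your proposal follows essentially the same route as the paper: it assembles the six observations (reduction to degree via homotopy invariance, multiplicativity under composition making the index linear in degree, independence of connection via the Fredholm intertwiners $\mathcal{P}$, $\mathcal{P}^\dag$, decomposition by rank for the trivial bundle) and pins down the normalisation constant by the same explicit kernel computation for $T=\sum\hat{x}_\mu\hat{c}_\mu$ on the trivial flat line bundle, using the representation-theoretic reduction to $\Delta(\rho/|x|^2)=0$. This is the paper's own argument, cleanly reorganised.
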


\begin{rmk}
	Compare this with the non-singularity discussion (\cf Corollary \ref{cokernelvanishingonNahmtransform}). This shows the inverse Nahm transform bundle $\hat{\hat{E}}$ has rank equal to $\text{rank}(E)$ in the interior of $B$ and vanishes in the exterior, thus resolving Lemma \ref{indexproblem}. In particular there is a jump of index when we cross the boundary, so the operator $T=\sum_{\mu}(\hat{x}_\mu-y_\mu)\hat{c}_\mu$ fails to be Fredholm when $y\in \partial B$.
\end{rmk}

\subsection{Singularity formation and small spectrum}\label{Singularityformationandsmallspectrum}

The aim of this Section is to show Theorem \ref{instantonnumberequalsdimsmallspectrum}. The key input is the following result, which combines our local Nahm transform theory with our analytic convergence theory for singularity formation.

\begin{lem}
In the setup of a 1-parameter family of ASD connections $A_t$ developing a curvature singularity at the origin,
we have the exact sequence
\begin{equation}\label{coherentsheafpresentation}
0\to E\xrightarrow{\alpha_t} H^2_{D_{A_t}}\otimes S_- \xrightarrow{\sum_{ \mu}(\hat{x}_\mu-y_\mu)\hat{c}_\mu } H^2_{D_{A_t}}\otimes S_+\to 0,
\end{equation}
In the limit $t\to \infty$, the operator $ \sum_{\mu} (\hat{x}_\mu-y_\mu)\hat{c}_\mu  $ tends to
\[
(H^2_{D_{A_\infty}}\oplus V)\otimes S_- \xrightarrow{\sum_{ \mu}(\hat{x}_\mu\oplus 0 -y_\mu)\hat{c}_\mu } (H^2_{D_{A_\infty}}\oplus V)\otimes S_+,
\]
where convergence holds on the whole ball, and $\alpha_t$ tends to 
\[
E\xrightarrow{\alpha_\infty \oplus 0} (H^2_{D_{A_\infty}}\oplus V)\otimes S_-,
\]
where convergence holds on $\{0<|x|<R\}$, and is uniform on compact subsets.
\end{lem}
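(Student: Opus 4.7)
The proof splits naturally into three pieces, matching the three claims.

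First, for exactness of the sequence at each finite $t$: by the Reconstruction Theorem \ref{Thereconstructiontheorem}, $\alpha_t: E \to \hat{\hat{E}}$ is an injective isometry whose image is $\hat{\hat{E}}$. Under the ADHM dictionary of Section \ref{formalcomparisonwithADHM}, the fibre $\hat{\hat{E}}_y$ is by definition the kernel of $\sum_\mu (\hat{x}_\mu - y_\mu)\hat{c}_\mu$ on $H^2_{D_{A_t}} \otimes S_-$. This gives exactness at the middle term, and injectivity of $\alpha_t$ gives exactness at the left. Surjectivity of the second map is Corollary \ref{cokernelvanishingonNahmtransform} translated into the ADHM language: $\hat{D}^-_{\hat{A}_y}$ is surjective on the space of invariant sections of $\hat{E}$.

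Next, for convergence of $\sum_\mu (\hat{x}_\mu - y_\mu)\hat{c}_\mu$: split the operator into $\sum_\mu \hat{x}_\mu \hat{c}_\mu$ and the constant piece $-\sum_\mu y_\mu \hat{c}_\mu$. Since each coordinate function $x_\mu$ is smooth on $\bar{B}$ and satisfies $x_\mu(0)=0$, Proposition \ref{limitToeplitzoperator} applies and yields $\hat{x}_\mu^{(t)} \to \hat{x}_\mu^{(\infty)} \oplus 0$ on the limit space $H^2_{D_{A_\infty}} \oplus V$. The term $-\sum_\mu y_\mu \hat{c}_\mu$ is independent of $t$ and converges trivially under the trivialisation of Theorem \ref{limitBergmannspace}, acting as the same Clifford multiplication on both summands. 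Summing gives the claimed limit operator $\sum_\mu \bigl((\hat{x}_\mu \oplus 0) - y_\mu\bigr)\hat{c}_\mu$, and since $y$ enters only through the bounded constant piece the convergence is valid uniformly for every $y$ in the ball.

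Finally, to obtain convergence of $\alpha_t$ it is more convenient to work with the adjoints. According to Section 3.3, the adjoint $\alpha_t^\dag: H^2_{D_{A_t}} \otimes S_- \to E_x$ factors, up to the antilinear twist $\hat{\epsilon}$, through the evaluation $ev_x \circ G_{A_t}$ of the Green's operator at $x$. Proposition \ref{limitGreenoperator} provides exactly the required convergence: on any annulus $0 < r \le |x| \le r' < R$, the maps $ev_x \circ G_{A_t}$ converge strongly, uniformly in $x$, to a functional which vanishes on $V$ and agrees with $ev_x \circ G_{A_\infty}$ on the $H^2_{D_{A_\infty}}$ summand. Passing to adjoints preserves operator norm convergence and produces $\alpha_t \to \alpha_\infty \oplus 0$ uniformly on compact subsets of $\{0 < |x| < R\}$. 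The main obstacle, and the essential conceptual point, is that this convergence cannot be extended across $x=0$: that is precisely where the instanton energy concentrates, where the $V$ factor has its support, and where the Green's operator develops singular behaviour not captured by $G_{A_\infty}$. Thus the formula $\alpha_\infty \oplus 0$ is genuinely misleading near the origin, even though it suffices on the punctured ball, and this is exactly the ingredient needed for the subsequent identification of $\dim V$ with the instanton number.
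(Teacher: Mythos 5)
Your proposal is correct and takes essentially the same route as the paper: the paper's own proof is a short sketch that identifies exactly the same three ingredients — the reconstruction theorem \ref{Thereconstructiontheorem} for exactness, Proposition \ref{limitToeplitzoperator} applied to $f = x_\mu$ for convergence of the Toeplitz piece, and Proposition \ref{limitGreenoperator} (via the evaluation of the Green operator) for convergence of $\alpha_t$. Your write-up is a faithful, slightly more detailed expansion of that sketch, with a sensible clarification that the constant term $-\sum y_\mu \hat{c}_\mu$ is $t$-independent and a helpful remark about why the convergence of $\alpha_t$ cannot be expected across the origin.
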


\begin{proof}
The existence of the exact sequence is rephrasing the reconstruction theorem \ref{Thereconstructiontheorem}.

The convergence of $(\hat{x}_\mu-y_\mu)\hat{c}_\mu  $ follows from the convergence theory of Toeplitz operators (\cf Proposition \ref{limitToeplitzoperator}), applied to $f=x_\mu$.
The convergence of the canonical comparison maps $\alpha_t$  follows from Proposition \ref{limitGreenoperator}, because $\alpha_t$ is essentially defined as the evaluation map of the Green operator.
\end{proof}

\begin{rmk}
The heuristic idea of Theorem \ref{instantonnumberequalsdimsmallspectrum} is that, by (\ref{instantonnumberChernclass}) we can think of the instanton number as a `Chern class'. Since we have the exact sequence (\ref{coherentsheafpresentation}), 
we think of the `Chern class' of $E$ as a `difference of the Chern classes for infinite rank bundles', and compute it by topological manipulations. This idea involves several difficulties:
\begin{itemize}
	\item This involves infinite rank bundles.
	\item The domain is not a closed manifold.
	\item In the limit the exactness fails.
\end{itemize}
\end{rmk}

The main idea to remedy these, is to replace vector bundles by relative K-theory classes, and work in the Fredholm setting.

We sketch the following relative version of the index bundle construction, using only first principles in K-theory.

\begin{lem}
Let $X$ be a compact connected manifold with boundary $Y$. Given the data 
\begin{itemize}
\item A family of Fredholm maps between (possibly finite rank) Hilbert bundles, defined over $X$: \[
\mathcal{H}_1 \xrightarrow{\mathcal{F} } \mathcal{H}_2. 
\]
\item A trivial finite rank vector bundle $\tilde{E}|_Y$ over $Y$, with a morphism $\alpha$ into $\ker \mathcal{F}|_Y$, such that the complex
\[
0 \to \tilde{E}|_Y \xrightarrow{\alpha} \mathcal{H}_1|_Y \xrightarrow{\mathcal{F} } \mathcal{H}_2|_Y \to 0
\]
is a short exact sequence. 
\end{itemize}
Then we obtain a natural class in the relative K-theory $K(X,Y)$. Denote this as $ \text{Ind } [\mathcal{H}_1 \xrightarrow{\mathcal{F} } \mathcal{H}_2, \tilde{E}|_Y, \alpha    ] $.
Morever, this class is invariant under small collective deformations of all the defining data, and the construction is naturally additive.
\end{lem}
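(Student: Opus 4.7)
\bigskip

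The plan is to mimic the classical index bundle construction: stabilise $\mathcal{F}$ by a finite rank addition so that its kernel becomes an honest finite rank vector bundle on $X$, and read off a canonical trivialisation on $Y$ from the exactness hypothesis. The relative K-theory $K(X,Y)$ will then be presented in the familiar form of triples $(V_0, V_1, \sigma)$, where $V_0, V_1$ are finite rank bundles on $X$ and $\sigma: V_0|_Y \xrightarrow{\sim} V_1|_Y$, modulo stabilisation and homotopy.

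First I would carry out the stabilisation. Because $\mathcal{F}$ is a continuous family of Fredholm operators on the compact space $X$, upper semicontinuity of $\dim\operatorname{coker}\mathcal{F}_x$ together with a partition of unity produces a finite dimensional space $W$ and an inclusion $\iota: \underline{W} \hookrightarrow \mathcal{H}_2$ of the trivial bundle such that the augmented map
\[
\tilde{\mathcal{F}}: \mathcal{H}_1 \oplus \underline{W} \to \mathcal{H}_2, \qquad (v,w)\mapsto \mathcal{F}v + \iota(w),
\]
is fibrewise surjective on all of $X$. Then $K := \ker\tilde{\mathcal{F}}$ is a finite rank smooth vector bundle over $X$. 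Restricting to $Y$, the projection $K|_Y \to \underline{W}|_Y$, $(v,w)\mapsto w$, is surjective because $\mathcal{F}|_Y$ is, and its kernel is exactly $\ker\mathcal{F}|_Y = \tilde{E}|_Y$ via $\alpha$. Choosing a Hermitian metric and splitting the short exact sequence $0\to \tilde{E}|_Y \to K|_Y \to \underline{W}|_Y \to 0$ gives a canonical (up to contractible choice) isomorphism $\sigma: K|_Y \xrightarrow{\sim} \tilde{E}|_Y \oplus \underline{W}|_Y$. Since $\tilde{E}|_Y$ is trivial, it extends to a trivial bundle $\underline{\tilde E}$ on $X$, and I set
\[
\operatorname{Ind}\bigl[\mathcal{H}_1 \xrightarrow{\mathcal{F}} \mathcal{H}_2,\, \tilde{E}|_Y,\, \alpha\bigr] := \bigl[\,K,\; \underline{\tilde E} \oplus \underline{W},\; \sigma\,\bigr] \;\in\; K(X,Y).
\]

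Second, I would check well-definedness. Independence of the auxiliary $W$ follows from the standard stabilisation argument: given $W, W'$, passing to $W\oplus W'$ shows the two representatives are equivalent in $K(X,Y)$. Independence of the metric splitting is clear since the space of splittings is affine, hence contractible. A different morphism $\alpha'$ realising exactness is connected to $\alpha$ through a path in the (convex) space of such morphisms, so homotopy invariance of $K(X,Y)$ yields the same class. Additivity under direct sums is immediate from the construction because one may take direct sums of the stabilising bundles $W$.

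Finally, for invariance under small collective deformations of all the data, I would observe that fibrewise surjectivity of $\tilde{\mathcal{F}}$ is an open condition, so the same $\underline{W}$ continues to work for any sufficiently small perturbation; both $K$ and $\sigma$ then vary continuously, producing a homotopy of relative K-classes. The main obstacle will be arranging the stabilising $W$ so that it is simultaneously compatible with the Fredholm structure over all of $X$ and the exactness over $Y$; the cleanest way is to first pick $W_0$ making $\mathcal{F}|_Y$ stably surjective in a manner coherent with $\alpha$, then enlarge to $W \supset W_0$ so that surjectivity propagates to the interior. Once this coherent choice is in place, the verifications above become formal consequences of standard K-theoretic manipulations.
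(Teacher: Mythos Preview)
Your proposal is correct and follows essentially the same index-bundle route as the paper: stabilise $\mathcal{F}$ by a finite rank summand to force fibrewise surjectivity, take the kernel bundle, and use the exactness on $Y$ together with $\alpha$ to produce the trivialisation there. The one technical variation is that the paper chooses the stabilising map $p:\C^N\to\mathcal{H}_2$ to vanish near $Y$ (using a cutoff), so that over $Y$ the kernel is literally $\tilde{E}|_Y\oplus\C^N$ via $\alpha\oplus\mathrm{Id}$ with no splitting required; you instead allow a global $\iota$ and then split the short exact sequence $0\to\tilde{E}|_Y\to K|_Y\to\underline{W}|_Y\to 0$. Both give the same class, and your version is closer to the textbook Atiyah construction, while the paper's localisation trick bypasses the splitting step and the ``main obstacle'' you flag in your final paragraph (since $\mathcal{F}|_Y$ is already surjective by hypothesis, no compatibility issue actually arises). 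One small point: the space of isomorphisms $\alpha$ realising the exactness is not convex in general, though for \emph{small} deformations the straight-line path stays among isomorphisms, which is all the lemma asks for.
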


\begin{proof}
(Sketch) We follow the following steps:
\begin{enumerate}
\item Since $X$ is connected, the index of $\mathcal{F}$ over each point in $X$ is constant. So if $\mathcal{F}$ is fibrewise surjective, then the kernel bundle is a finite rank vector bundle over $X$ with a trivialisation over $Y$, so defines a relative K-theory class. Morever, the construction is invariant under small deformations of $\mathcal{F}$  away from $Y$.

\item The surjectivity of $\mathcal{F}$ is satisfied near $Y$. If it fails somewhere away from $Y$, we can replace $\mathcal{H}_1$ by $\mathcal{H}_1\oplus \C^N$, where $\C^N$ is a trivial vector bundle with sufficiently large rank. Now perturb $\mathcal{F}$ by adding a morphism $p: \C^N \rightarrow \mathcal{H}_2$.  Generically this will force surjectivity to hold. We require $p$ to vanish near $Y$; this uses the existence of local cutoff functions. We define the class by 
\[   \text{Ind } [\mathcal{H}_1 \oplus \C^N \xrightarrow{\mathcal{F}\oplus p } \mathcal{H}_2, \tilde{E}|_Y\oplus \C^N|_Y, \alpha\oplus Id_{\C^N}    ] .                       \]

\item We check the well definition of the above construction. Clearly
\[   
\begin{split}
&\text{Ind } [\mathcal{H}_1 \oplus \C^N \xrightarrow{\mathcal{F}\oplus p } \mathcal{H}_2, \tilde{E}|_Y \oplus \C^N|_Y, \alpha \oplus Id_{\C^N}   ]  \\
&=\text{Ind } [\mathcal{H}_1 \oplus \C^{N+N'} \xrightarrow{\mathcal{F}\oplus p\oplus 0 } \mathcal{H}_2, \tilde{E}|_Y\oplus \C^{N+N'}, \alpha \oplus Id_{\C^{N+N'}}   ] .   
\end{split} \]
Morever any two choices of perturbations $p$ and $p'$ are connected by some path. This path may pass through some elements $p''$ which do not make $\mathcal{F}\oplus p''$ surjective. But there is some $N'$ such that $p\oplus 0: \C^{N+N'}\to \mathcal{H}_2$ is connected to $p'\oplus 0$ by some non-singular path, because we can always use the extra degrees of freedom to remove any failure of transversality. The upshot is that by homotopy invariance the class is independent of the perturbation $p$, and is clearly independent of $N$.

\item
Suppose we have a one parameter family of the data $(\mathcal{H}_1  \xrightarrow{\mathcal{F} } \mathcal{H}_2, \tilde{E}|_Y, \alpha )$, parametrised by $t\in [0,1]$, with constraints as given in the theorem. We claim they define the same class. This is because without loss of generality $\mathcal{F}$ is surjective, and then the claim reduces to the usual homotopy invariance property. 

\item The additivity is obvious.
\end{enumerate}
\end{proof}

Back to our context, the characteristic number description (\ref{instantonnumberChernclass}) gives

\begin{lem}
The instanton number is equal to the second Chern class of the relative K-theory class defined by the pair $(  E,  \tilde{E}|_{\partial B(R/2)}  )$, under the Chern-Weil homomorphism
\[
c_2: K(B(\frac{R}{2}), \partial B(\frac{R}{2}))
\to H^4(   B(\frac{R}{2}), \partial B(\frac{R}{2})      ).
\]
\end{lem}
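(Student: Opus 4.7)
The plan is to make the relative $K$-theory class concrete and then invoke the standard secondary Chern--Weil formula. First, I would observe that since $A_t \to A_\infty$ smoothly on $B \setminus \{0\}$ and both $E$ and $\tilde{E}$ arise from the same underlying bundle on the punctured ball, there is a canonical isomorphism $E|_{B\setminus \{0\}} \simeq \tilde{E}|_{B\setminus\{0\}}$. Restriction to $\partial B(R/2)$ yields a canonical identification $E|_{\partial B(R/2)} \simeq \tilde{E}|_{\partial B(R/2)}$, so the pair represents a well-defined virtual class $[E]-[\tilde E] \in K(B(R/2), \partial B(R/2))$, with the identity morphism on the boundary.

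The second step is to compute $c_2$ of this class via Chern--Weil. For a relative class $[E_1] - [E_2]$ together with a prescribed isomorphism on the boundary, pick any connections $A_1, A_2$ on $E_1, E_2$ that agree on $\partial B(R/2)$ under the given isomorphism (or, if they do not, correct by the Chern--Simons transgression). The secondary Chern--Weil formula in this setting reads
\[
c_2([E_1]-[E_2]) \;=\; \frac{1}{8\pi^2}\int_{B(R/2)}\bigl[\operatorname{Tr}(F_{A_1}\wedge F_{A_1}) - \operatorname{Tr}(F_{A_2}\wedge F_{A_2})\bigr] \;-\;\int_{\partial B(R/2)}\bigl[CS(A_1) - CS(A_2)\bigr].
\]
I would apply this with $A_1 = A_t$ (on $E$) and $A_2 = A_\infty$ (on $\tilde{E}$). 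Since $E$ and $\tilde E$ are literally the same bundle in a neighbourhood of $\partial B(R/2)$, a common trivialisation there makes $CS(A_t) - CS(A_\infty)$ into an unambiguously defined 3-form on $\partial B(R/2)$ (without the mod-$\mathbb{Z}$ ambiguity), so the formula is strictly meaningful.

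The third step is to match this with the topological formula (\ref{instantonnumberChernclass}). Recall that the normalization of $CS$ was fixed precisely by demanding $\int_{\partial B(R/2)} CS(A_\infty) = \frac{1}{8\pi^2}\int_{B(R/2)}\operatorname{Tr}(F_{A_\infty}\wedge F_{A_\infty})$. Substituting this into the secondary formula above gives exactly the expression (\ref{instantonnumberChernclass}) for $k$ (valid at any radius $r=R/2$), so $k = c_2([E]-[\tilde E])$, as claimed.

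The main technical point is justifying that the prescribed boundary trivialisation really allows one to write the secondary class as the above integral formula. This reduces to confirming that the canonical identification between $E$ and $\tilde E$ near $\partial B(R/2)$ (which exists because the smooth limit $A_\infty$ exists away from the origin and by Uhlenbeck removability extends across the origin into a different global topological model $\tilde E$) is the one that pairs with the normalisation of $CS$ used in (\ref{instantonnumberChernclass}). Once this matching is verified, the rest of the argument is an essentially formal manipulation of Chern--Weil integrals.
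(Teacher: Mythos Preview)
Your proposal is correct and follows essentially the same approach as the paper. The paper's own argument is a single clause (``the characteristic number description (\ref{instantonnumberChernclass}) gives''), and your write-up simply unpacks what this means: identify $E$ and $\tilde{E}$ on the annulus, apply the secondary Chern--Weil/Chern--Simons formula for the relative class, and observe that the normalisation of $CS(A_\infty)$ fixed in the introduction makes the result coincide with (\ref{instantonnumberChernclass}).
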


\begin{lem}
The class defined by the pair $(  E,  \tilde{E}|_{\partial B(R/2)}  )$ inside the relative K-theory $K(B(\frac{R}{2}), \partial {B(\frac{R}{2})})$ is
\[
( \dim V) \text{Ind } [  S_- \xrightarrow{\sum y_\mu \hat{c }_\mu }  S_+, 0, 0 ].
\]
\end{lem}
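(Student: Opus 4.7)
The plan is to view the relative K-theory class as an index bundle class via the exact sequence (\ref{coherentsheafpresentation}), then deform the parameter $t$ to $\infty$ and exploit additivity of the index bundle construction.

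At finite $t$, I restrict (\ref{coherentsheafpresentation}) to $B(R/2)\subset B(R)$, where the operator $\mathcal{F}_t = \sum_\mu(\hat{x}^t_\mu - y_\mu)\hat{c}_\mu$ is fibrewise surjective and Fredholm with kernel bundle $E$. On the boundary $\partial B(R/2)$, the identification $E\cong \tilde{E}$ together with the comparison map $\alpha_t$ supplies the required boundary trivialization, so
\[
(E,\, \tilde{E}|_{\partial B(R/2)}) \;=\; \mathrm{Ind}\bigl[\,H^2_{D_{A_t}}\otimes S_- \xrightarrow{\mathcal{F}_t} H^2_{D_{A_t}}\otimes S_+,\; \tilde{E}|_{\partial B(R/2)},\; \alpha_t\,\bigr]
\]
as elements of $K(B(R/2), \partial B(R/2))$.

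Next I deform $t\to\infty$. Theorem \ref{limitBergmannspace} furnishes a natural continuous trivialization of $H^2_{D_{A_t}}$ near $t=\infty$, under which $\mathcal{F}_t$ converges fibrewise in norm (by Proposition \ref{limitToeplitzoperator}, applied to $f=x_\mu$) to
\[
\mathcal{F}_\infty \;=\; \Bigl(\sum_\mu (\hat{x}^\infty_\mu - y_\mu)\hat{c}_\mu\Bigr) \,\oplus\, \Bigl(-\sum_\mu y_\mu \hat{c}_\mu\Bigr)
\]
on $(H^2_{D_{A_\infty}}\oplus V)\otimes S_\pm$, and the boundary trivialization converges to $\alpha_\infty \oplus 0$ by Proposition \ref{limitGreenoperator}. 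Fredholmness is preserved along the deformation: the $H^2_{D_{A_\infty}}$ summand is Fredholm on $\overline{B(R/2)}\subset B(R)$ by the reconstruction theorem applied to the extended connection $(\tilde{E}, A_\infty)$, while the $V$ summand is automatically Fredholm since $V$ is finite-dimensional. Homotopy invariance of the relative index bundle then identifies the class at finite $t$ with the class at $t=\infty$.

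Additivity now splits the limit class into two pieces. The $H^2_{D_{A_\infty}}$ summand is, by the reconstruction theorem for $A_\infty$, precisely the index presentation of $\tilde{E}$ over $B(R/2)$ with boundary trivialization equal to the identity, yielding $(\tilde{E}, \tilde{E}|_{\partial B(R/2)}) = 0$ in $K(B(R/2), \partial B(R/2))$. The $V$ summand uses that $V$ is constant of rank $\dim V$ (and so $V\otimes S_\pm$ is a trivial bundle over $B(R/2)$); by additivity it equals $\dim V$ copies of $\mathrm{Ind}[S_- \xrightarrow{\sum y_\mu \hat{c}_\mu} S_+, 0, 0]$ — the sign flip is immaterial since $-\sum y_\mu \hat{c}_\mu$ and $\sum y_\mu \hat{c}_\mu$ are homotopic through invertible symbols on $\partial B(R/2)$. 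Combining the two pieces yields the claimed identity.

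The main obstacle is making the $t\to\infty$ homotopy rigorous, because the Hilbert bundle itself changes genuinely as the small spectrum emerges from the origin; here Theorem \ref{limitBergmannspace} supplies exactly the continuous trivialization needed to view the deformation as a homotopy of a single family of Fredholm complexes. A secondary subtlety is that $\alpha_t$ converges only uniformly on compact subsets of $B\setminus\{0\}$ rather than globally up to the origin, but this suffices since the boundary $\partial B(R/2)$ stays uniformly away from the singularity.
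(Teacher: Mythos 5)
Your proof is correct and follows essentially the same route as the paper: identify the class with the index bundle from the exact sequence, deform $t\to\infty$ using the convergence results of Theorem \ref{limitBergmannspace} and Propositions \ref{limitToeplitzoperator}, \ref{limitGreenoperator}, then split by additivity into the $H^2_{D_{A_\infty}}$ piece (which vanishes by the reconstruction theorem for $A_\infty$) and the $V$ piece. You supply some welcome details the paper leaves tacit, notably the sign-flip homotopy between $-\sum y_\mu\hat c_\mu$ and $\sum y_\mu\hat c_\mu$ and the justification that the boundary circle stays away from the singularity.
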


\begin{proof}
Using the exact sequence (\ref{coherentsheafpresentation}), this class is 
\[
\text{Ind } [H^2_{D_{A_t}}\otimes S_- \xrightarrow{ } H^2_{D_{A_t}}\otimes S_+, \tilde{E}|_{\partial B(R/2)}, \alpha_t    ],
\]
which by continuity is 
\[
\text{Ind } [ (H^2_{D_{A_\infty}}\oplus V)\otimes S_- \xrightarrow{ } (H^2_{D_{A_\infty}}\oplus V) \otimes S_+, \tilde{E}|_{\partial B(R/2)}, \alpha_\infty ] 
\]
By the additivity of the index bundle construction, this is
\[
\text{Ind } [ H^2_{D_{A_\infty}}\otimes S_- \xrightarrow{ } H^2_{D_{A_\infty}} \otimes S_+, \tilde{E}|_{\partial B(R/2)}, \alpha_\infty ] + 
\text{Ind } [  V\otimes S_- \xrightarrow{ }  V \otimes S_+, 0, 0 ]
\]
Now by the analogue of the exact sequence (\ref{coherentsheafpresentation}) applied to $A_\infty$,
the first summand is the relative K-theory class defined by the pair $(\tilde{E}, \tilde{E}|_{\partial B(R/2)})
$, which is trivial. So the class simplifies to
$
( \dim V) \text{Ind } [  S_- \xrightarrow{\sum y_\mu \hat{c }_\mu }  S_+, 0, 0 ].
$
\end{proof}

\begin{lem}
The $c_2$ of the relative K-theory class $\text{Ind } [  S_- \xrightarrow{\sum y_\mu \hat{c }_\mu }  S_+, 0, 0 ]$ is 1.
\end{lem}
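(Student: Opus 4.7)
The strategy is to recognise the class $\kappa := \mathrm{Ind}\,[S_- \xrightarrow{\sum y_\mu \hat{c}_\mu} S_+, 0, 0]$ as the standard Bott generator of $K(B^4,S^3)$, and then invoke (or re-derive) the textbook fact that this generator has second Chern class $1$. The starting point is the Clifford identity $(\sum y_\mu \hat{c}_\mu)^2 = -|y|^2 \cdot \mathrm{Id}$, which shows that the symbol is invertible precisely off the origin, with inverse $-|y|^{-2}\sum y_\mu \hat{c}_\mu$; in particular the defining data make sense as a relative K-theory element on $(B(R/2), \partial B(R/2))$.

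Next I would identify $\kappa$ with the Bott class. Since $S_\pm$ are each trivialisable over the contractible ball, such a relative K-theory class is equivalent to clutching data for a bundle on $S^4 = B^4 \cup_{S^3} B^4$. The clutching map is, up to a positive scalar, the unitary $\sigma(y) = \sum y_\mu \hat{c}_\mu / |y| : S^3 \to \mathrm{Iso}(S_-, S_+) \simeq SU(2)$, which under the usual identifications is a degree-one map — this is a direct calculation, or equivalently the statement that Clifford multiplication by a unit quaternion is the identity on $SU(2)$ under $S_- \simeq S_+ \simeq \mathbb{C}^2$. Hence $\kappa$ is a generator of $\tilde{K}(S^4) \simeq \mathbb{Z}$.

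For the $c_2$ computation, one can then take either of two routes. The cleanest is to unwind the definition: after stabilising and perturbing as in the index-bundle construction, $\kappa$ is represented by a finite-rank virtual bundle whose Chern character equals the Thom class of the spin representation (this is the content of the Bott periodicity / Thom isomorphism in K-theory for $\mathbb{R}^4$), which integrates to $1$ against the fundamental class. Alternatively, and perhaps more in keeping with the spirit of the paper, one exhibits an explicit realisation: the basic BPST instanton on $S^4$, with $\int_{S^4} c_2 = 1$, has as its ADHM data (under the ordinary Atiyah–Drinfeld–Hitchin–Manin correspondence) exactly the $1$-dimensional trivial data $\hat{x}_\mu = 0$, and so its associated index-bundle symbol complex is precisely the one defining $\kappa$ — making $c_2(\kappa) = 1$ a restatement of the charge of the $1$-instanton.

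The main obstacle is the first step: carefully matching the ad-hoc relative index construction of the previous lemma (including the stabilisation by trivial summands and the generic perturbation $p$) with the classical clutching description of $\tilde{K}(S^4)$, so that one can legitimately transport the known topological computation to our setting. Once this identification is established, $c_2(\kappa) = 1$ is a standard evaluation and the proof is complete.
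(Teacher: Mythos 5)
Your second route is essentially the paper's proof: the paper stabilises the complex by a trivial $\C^2$ summand (with identity trivialisation over $\partial B(R/2)$), then chooses the generic perturbation $p:\C^2\to S_+$ to be exactly the extra map in the ADHM monad for the standard charge-one instanton, so that the resulting kernel bundle is literally the BPST instanton bundle and $c_2=1$ is its charge. The point you flag as ``the main obstacle'' — matching the ad-hoc index-bundle construction to something with a known $c_2$ — is precisely what this stabilisation buys: the $\C^2$ that the index-bundle construction forces you to add is the $W$-factor of the rank-two ADHM data, so the stabilised, perturbed complex $\C^2\oplus S_-\to S_+$ is not merely homotopic to the instanton monad, it \emph{is} the instanton monad. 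Your phrasing that the one-instanton's ``associated index-bundle symbol complex is precisely the one defining $\kappa$'' is a little loose — the unstabilised complex $S_-\xrightarrow{\sum y_\mu\hat c_\mu}S_+$ is not surjective at the origin and differs from the monad by the $\C^2$ summand — but once the stabilisation is spelled out the identification is exact, and the argument closes.

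Your first route (clutching/Bott/Thom) is a genuinely different and also valid path, and has the advantage of not invoking the ADHM construction at all; it buys generality at the cost of having to pin down orientation and sign conventions. Note that for a reduced class with $c_1=0$ the degree-four Chern character is $-c_2$, so ``$\mathrm{ch}$ integrates to $1$'' would naively give $c_2=-1$; whether you land on $+1$ or $-1$ depends on the choice of generator, the orientation on $S^4$, and whether one works with $S_-\to S_+$ or its adjoint. The paper sidesteps this bookkeeping entirely by producing the instanton bundle on the nose, which is why I'd recommend the ADHM identification as the cleaner route for this particular statement.
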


\begin{proof}
Using the construction of the index bundle,
\[
\begin{split}
\text{Ind }[  S_- \xrightarrow{\sum y_\mu \hat{c }_\mu }  S_+, 0, 0 ] 
= \text{Ind }[  \C^2\oplus S_- \xrightarrow{0\oplus \sum y_\mu \hat{c }_\mu }  S_+, \C^2|_{ \partial B(R/2)  }, Id_{\C^2}\oplus 0 ].
\end{split} 
\]
To make sense of this, we need to apply a perturbation to the map 
\[
\C^2\oplus S_- \xrightarrow{0\oplus \sum y_\mu \hat{c}_\mu  }  S_+
\]
to make it surjective. A particular choice is given by the map appearing in the ADHM construction of the standard 1-instanton (\cf \cite{DonaldsonKronheimer}, Section 3.4.1). Thus the relative K-theory class is given by the class of the 1-instanton, together with a trivialisation near infinity. This has charge $c_2=1$.
\end{proof}

Combining these Lemmas give Theorem \ref{instantonnumberequalsdimsmallspectrum}.

\end{document}